\documentclass[12pt]{amsart}

\newtheorem{theorem}{Theorem}[section]
\newtheorem{lemma}[theorem]{Lemma}
\newtheorem{proposition}[theorem]{Proposition}
\newtheorem{corollary}[theorem]{Corollary}   
\newtheorem{definition}[theorem]{Definition}
\newtheorem{example}[theorem]{Example}

\newtheorem{conjecture}[theorem]{Conjecture}

\numberwithin{equation}{section}

\usepackage{times}
\usepackage{enumerate}
\usepackage{mathrsfs}
\usepackage{tfrupee}
\usepackage{tikz}
\usetikzlibrary{chains,fit}
\usetikzlibrary{shapes,snakes}
\usetikzlibrary{graphs}
\usepackage{graphicx,adjustbox}
\usepackage{hyperref}
\usepackage{amsmath,amssymb}
\usepackage{amscd}
\usepackage{graphicx}
\usepackage[all]{xy}



\begin{document}

\title[Weighted Oriented Edge Ideals and Its Alexander Dual]{Cohen-Macaulay Weighted
Oriented Edge Ideals and its Alexander Dual}
\author{
Kamalesh Saha \and Indranath Sengupta
}
\date{}

\address{\small \rm  Discipline of Mathematics, IIT Gandhinagar, Palaj, Gandhinagar, 
Gujarat 382355, INDIA.}
\email{kamalesh.saha@iitgn.ac.in}

\address{\small \rm  Discipline of Mathematics, IIT Gandhinagar, Palaj, Gandhinagar, 
Gujarat 382355, INDIA.}
\email{indranathsg@iitgn.ac.in}
\thanks{The second author is the corresponding author; supported by the 
MATRICS research grant MTR/2018/000420, sponsored by the SERB, Government of India.}

\date{}

\subjclass[2020]{Primary 05C22, 05C25, 13F20, 13H10.}

\keywords{Weighted oriented graphs, edge ideals, Alexander dual.}

\allowdisplaybreaks

\begin{abstract}
The study of the edge ideal $I(D_{G})$ of a weighted oriented graph $D_{G}$ with underlying graph $G$ started in the 
context of Reed-Muller type codes. We generalize a Cohen-Macaulay construction for $I(D_{G})$, which Villarreal gave for edge ideals of simple graphs. We use this construction to classify all the Cohen-Macaulay weighted oriented edge ideals, whose underlying graph is a cycle. We show that the conjecture on Cohen-Macaulayness of $I(D_{G})$, proposed by Pitones et al. (2019), holds for $I(D_{C_{n}})$, where $C_{n}$ denotes the cycle of length $n$. Miller generalized the concept of Alexander dual ideals of square-free monomial ideals to arbitrary monomial ideals, and in that direction, we study the Alexander dual of $I(D_{G})$ and its conditions to be Cohen-Macaulay.
\end{abstract}
\medskip

\maketitle

\large{\section{Introduction}}
\medskip

Study of monomial ideals in terms of combinatorics is always an interesting 
topic in algebra. Many authors studied square-free monomial ideals, specially edge ideals of graphs extensively (see \cite{hhmon}, \cite{svv}, \cite{van}, \cite{vi}, \cite{vil}). Nowadays special types of ideals are being studied for the development of research in other areas, edge ideals of weighted oriented graph, defined below, is one of those.
\medskip

Let $G$ be simple graph. A \textit{weighted oriented graph} $D_{G}$, 
with the underlying graph $G$, is a directed graph with a weight function $w:V(D_{G})\longrightarrow \mathbb{N}\setminus \{0\}$ on the vertex 
set $V(D_{G})$, where $\mathbb{N}$ denotes the set of non negative integers. 
We write $w(j)$ instead of $w(x_{j})$. An edge of $D_{G}$ is denoted by 
the ordered pair $(x_{i},x_{j})$ to describe the direction of the edge 
from $x_{i}$ to $x_{j}$. 
\medskip

\begin{definition}{\rm 
Let $D_{G}$ be a weighted oriented graph on the vertex set $V(D_{G})=\{x_{1},\ldots,x_{n}\}$. Then the \textbf{edge ideal} of $D_{G}$ is denoted by $I(D_{G})$ and defined as
$$I(D_{G})=\big<\{x_{i}x_{j}^{w(j)}\mid (x_{i},x_{j})\in E(D_{G})\}\big>,$$
in the polynomial ring $A=K[x_{1},\ldots,x_{n}]$ over the field $K$.
}
\end{definition}
\medskip

The main purpose of studying weighted oriented edge ideal $I(D_{G})$ is the appearance of $I(D_{G})$ as the initial ideal of a vanishing ideal $I(\mathcal{X})$ in the study of Reed-Muller typed codes of coding theory (see \cite{cll}, \cite{mpv} for details). Study of $I(D_{G})$ helps to obtain some properties of Reed-Muller codes easily. For example, if $I(D_{G})$ is Cohen-Macaulay, then $I(\mathcal{X})$ is Cohen-Macaulay. To know about edge ideals of weighted oriented graphs in details, see \cite{gmsvv}, \cite{hlmrv}, \cite{prt}.
\medskip

In this paper, we show some constructions of Cohen-Macaulay 
edge ideals of weighted oriented graphs and also prove a sufficient 
condition for the Cohen-Macaulay property to hold for the Alexander 
dual of $I(D_{G})$. The paper is arranged in the following order: 
In section 2, we recall some definitions, notations and results 
from \cite{frob}, \cite{fg}, \cite{hlmrv}, \cite{prt}, \cite{peeva}. 
Villarreal has done some constructions of Cohen-Macaulay edge ideals 
of simple graphs (see \cite{vil} and \cite{vi}). We have 
generalised these results for weighted oriented edge ideals in section 3. In section 4, using the constructions of section 2, 
we classify all Cohen-Macaulay edge ideals of weighted oriented graphs whose underlying graphs are cycles. In \cite{prt}, 
the authors have proposed the following conjecture:
\medskip

\begin{conjecture}[\cite{prt}, Conjecture 53]\label{woconj}
Let $D_{G}$ be a weighted oriented graph. Then $I(D_{G})$ is Cohen-Macaulay if and only if $I(D_{G})$ is unmixed and $I(G)$ is Cohen-Macaulay.
\end{conjecture}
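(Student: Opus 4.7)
The plan is to split the equivalence into its two directions, since one is almost formal and the other is the substantive part.

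The forward direction essentially reduces to showing that $I(D_G)$ Cohen-Macaulay implies $I(G)$ Cohen-Macaulay, because Cohen-Macaulay ideals are automatically unmixed. The natural tool is polarization: writing $I(D_G)^{\mathrm{pol}}$ for the polarization in a larger polynomial ring $A'$, this squarefree monomial ideal is Cohen-Macaulay iff $I(D_G)$ is. I would then try to exhibit a linear sequence $\theta_{1},\ldots,\theta_{r} \in A'$ that is regular on $A'/I(D_G)^{\mathrm{pol}}$ and whose quotient recovers, up to a polynomial extension by free variables, the ring $A/I(G)$. The main technical issue is to choose the polarization variables and the specialization simultaneously so that the regularity of the sequence becomes visible combinatorially, most likely through a vertex-decomposability or shellability argument on the associated independence complex.

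The backward direction is the substantive half, and I would attack it by induction on the number of vertices, guided by the Villarreal-type constructions developed in section 3. Given $D_G$ with $I(D_G)$ unmixed and $I(G)$ Cohen-Macaulay, I would pick a well-chosen vertex $x$ of $G$ (for instance a simplicial vertex, which is available in many Cohen-Macaulay underlying graphs) and exploit the short exact sequence
\[
0 \longrightarrow \frac{A}{(I(D_G):x)} \longrightarrow \frac{A}{I(D_G)} \longrightarrow \frac{A}{(I(D_G),x)} \longrightarrow 0.
\]
The target is to verify that both $(I(D_G):x)$ and $(I(D_G),x)$ define Cohen-Macaulay rings of the correct dimensions by induction, using that unmixedness of $I(D_G)$ propagates to the link and deletion through the correspondence with strong vertex covers of \cite{prt}, and then to conclude by the standard depth lemma.

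The main obstacle is controlling the colon ideal $(I(D_G):x)$: it is typically not itself the edge ideal of a weighted oriented graph, so the induction must be carried out inside a broader class of monomial ideals that is closed under the reduction steps, and one must verify that unmixedness and Cohen-Macaulayness can be read off from a suitable generalised combinatorial datum in that broader class. A secondary obstacle is that the hypothesis ``$I(D_G)$ unmixed'' is strictly stronger than ``$I(G)$ unmixed'', so the inductive step must propagate the refined orientation-plus-weight condition at every reduction, not merely an underlying-graph condition. These are presumably the reasons why the conjecture remains open in full generality and why section 4 of the paper establishes it only for $G = C_{n}$, where the structure of strong vertex covers is tractable enough to carry out the induction explicitly.
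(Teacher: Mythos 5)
The statement you are trying to prove is Conjecture~\ref{woconj}, which the paper does not prove: it is an open conjecture of Pitones, Reyes and Toledo, and the paper only verifies it in special cases (cycles, in Theorems~\ref{cyc3} and~\ref{cyc5}, together with the previously known cases of bipartite graphs, forests, and the perfect-matching situation of Theorem~\ref{wocm}). So there is no ``paper's own proof'' to match, and your proposal should be judged on whether it actually closes the conjecture. It does not. The forward direction is indeed the easy half, but you do not need polarization plus a hypothetical linear regular sequence recovering $A/I(G)$: Cohen--Macaulayness of $I(D_G)$ gives unmixedness for free, and $I(D_G)$ Cohen--Macaulay implies $I(G)$ Cohen--Macaulay is already Proposition~51 of \cite{prt} (this is exactly what the paper invokes in Proposition~\ref{cyc2}). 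Your sketch of that direction replaces a citable fact with an unconstructed sequence $\theta_1,\ldots,\theta_r$ whose existence and regularity you never establish.

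The backward direction is where the conjecture actually lives, and your own text concedes the two fatal gaps: the colon ideal $(I(D_G):x)$ leaves the class of weighted oriented edge ideals, so the induction has no class to run in, and you give no mechanism for propagating the unmixedness hypothesis (which, via Lemma~\ref{unm}, is a statement about strong vertex covers of $D_G$, not just about $G$) through the link and deletion. Acknowledging an obstacle is not the same as overcoming it; as written, the inductive step is entirely hypothetical. A correct submission here would either prove the conjecture (which would be a substantial new result) or, more realistically, restrict to a class of graphs --- as the paper does for $G\simeq C_3$ and $G\simeq C_5$, where Theorem~\ref{cyc4} gives an explicit list of the unmixed orientations and the constructions of Section~3 (Propositions~\ref{wocons8}, \ref{wocons11} and their corollaries) can be applied case by case. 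You should reframe your write-up accordingly rather than presenting it as a proof of the general statement.
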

\medskip

\noindent In Theorem \ref{cyc3} and Theorem \ref{cyc5} (section 4), we prove that the above Conjecture \ref{woconj} holds good 
for all weighted oriented graphs $D_{G}$, where $G\simeq C_{n}$ is a cycle. 
The Alexander dual of an arbitrary monomial ideal was defined in \cite{mil}. 
It was proved by Lyubeznik in \cite{lg} that $I(G)^{\vee}$ is Cohen-Macaulay 
if and only $\overline{G}$ is chordal. We show in Theorem \ref{alx5}, in 
section 5, if $I(D_{G})^{\vee}$ is Cohen-Macaulay then 
$\overline{G}$ is chordal. However, the converse is not true. 
We prove Theorem \ref{alx4}, in section 5, to show that if 
$\overline{G}$ is chordal and $D_{G}$ satisfies a condition ($\ast$) (see section 5) 
then the Alexander dual $I(D_{G})^{\vee}$ is Cohen-Macaulay. 

\large{\section{Preliminaries}}
In this section, we introduce some terminologies and concepts from \cite{prt} and \cite{hlmrv}, which have been used 
extensively in our work. Let $D_{G}$ be a weighted oriented graph. Then a non-isolated vertex $v\in V(D_{G})$ is called a \textit{source} (respectively, a \textit{sink}) if $\{u,v\}\in E(G)$ implies $(v,u)\in E(D_{G})$ (respectively, $(u,v)\in E(D_{G})$). For a source vertex $v$, we always assume $w(v)=1$ as it does not change the ideal $I(D_{G})$. We denote by $\mathcal{N}(v)$, the set of vertices adjacent to a vertex $v$.
\medskip

\begin{definition}[\cite{prt} and \cite{hlmrv}]{\rm
Let $D_{G}$ be a weighted oriented graph. Corresponding to a vertex cover $C$ of $G$, define
\begin{align*}
& L_{1}(C)=\{x\in V(C) \mid \exists\, (x,y)\in E(D_{G})\,\, \text{such that}\,\, y\not\in C\}\\
& L_{2}(C)=\{x\in C\mid \mathcal{N}(x)\subseteq C\}\\
& L_{3}(C)=C\setminus (L_{1}(C)\cup L_{2}(C))
\end{align*}
A vertex cover $C$ of $G$ is called a strong vertex cover of $D_{G}$ if $C$ is either a minimal vertex cover of $G$ or for all $x\in L_{3}(C)$, there is $(y, x)\in E(D_{G})$ with $y\in L_{2}(C)\cup L_{3}(C)$ and $w(y)\neq 1$.}
\end{definition}

\begin{lemma}[\cite{prt}, Theorem 31]\label{unm}
$I(D_{G})$ is unmixed if and only if $I(G)$ is unmixed and $L_{3}(C)=\phi$, for any strong vertex cover $C$ of $D_{G}$.
\end{lemma}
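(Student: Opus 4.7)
The plan is to deduce the statement from the description of $\operatorname{Ass}(A/I(D_G))$ in terms of strong vertex covers, established in \cite{prt}. Under that correspondence, each strong vertex cover $C$ yields an associated prime $\mathfrak{p}_C = \langle x_i : x_i \in C\rangle$ of height $|C|$, so unmixedness of $I(D_G)$ is equivalent to the statement that all strong vertex covers of $D_G$ share a common cardinality.

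For the forward direction, I would first note that every minimal vertex cover of $G$ is automatically a strong vertex cover of $D_G$, so unmixedness of $I(D_G)$ forces all minimal vertex covers of $G$ to share a common size, giving unmixedness of $I(G)$. To establish the $L_3$ condition, suppose for contradiction that some strong vertex cover $C$ admits a vertex $x \in L_3(C)$; the strong-cover hypothesis provides $y \in L_2(C) \cup L_3(C)$ with $(y,x)\in E(D_G)$ and $w(y)\ne 1$. I would then use $y$ to manufacture another strong vertex cover $C'$ of different size by a local modification near $y$ (removing $y$ and adjoining external neighbors when necessary), contradicting unmixedness.

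For the converse, assuming $I(G)$ is unmixed and $L_3(C)=\emptyset$ for every strong vertex cover, I would take an arbitrary strong vertex cover $C$ and exploit the decomposition $C = L_1(C) \cup L_2(C)$. An inductive peeling argument removes vertices of $L_2(C)$ one at a time, preserving both the vertex-cover property (since for $z \in L_2(C)$, $\mathcal{N}(z) \subseteq C$ keeps all incident edges covered) and the strong-cover property, eventually reaching a minimal vertex cover of $G$; unmixedness of $I(G)$ then identifies $|C|$ with the common minimal-vertex-cover size, so all strong vertex covers have the same cardinality and $I(D_G)$ is unmixed. The principal obstacle is the forward step: producing a strong vertex cover of different cardinality from a given one with $L_3(C)\ne\emptyset$. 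The weight condition $w(y) \neq 1$ is the delicate lever used to keep the modified cover strong, and carefully tracking how vertices shift among $L_1$, $L_2$, $L_3$ after the local swap is where the substantive work lies.
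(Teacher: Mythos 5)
The paper offers no proof of this lemma --- it is imported from \cite{prt} --- so the comparison is against the argument there. Your reduction is the right one: by Theorem \ref{prmdc} the associated primes of $I(D_{G})$ are exactly the $P_{C}=\langle C\rangle$ for $C$ a strong vertex cover, so unmixedness is equivalent to all strong vertex covers being equicardinal, and since every minimal vertex cover of $G$ is strong, unmixedness of $I(D_{G})$ does force $I(G)$ unmixed. Both remaining steps, however, have genuine gaps. In the forward direction you leave the crucial step as a plan to ``manufacture'' a second strong cover by a local swap near the vertex $y$ with $w(y)\neq 1$; no such construction is needed, and the one you sketch is not the right lever. In the convention of \cite{prt} one has $L_{3}(C)=\{x\in C \mid \mathcal{N}(x)\subseteq C\}$, so $x\in L_{3}(C)$ means every edge at $x$ is already covered by $C\setminus\{x\}$; hence $C$ is a non-minimal vertex cover, properly contains a minimal (hence strong) cover $C_{0}$, and $P_{C}$, $P_{C_{0}}$ are two associated primes of different heights. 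The contradiction uses only covers already present in $\mathcal{C}_{s}$, and the weight condition plays no role in this step.

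Your converse also does not close. If the set you peel is nonempty, you remove at least one vertex and land on a minimal cover of cardinality strictly less than $|C|$, which says nothing about $|C|$ itself; the point is rather that $L_{3}(C)=\emptyset$ (again in the convention of \cite{prt}) already forces $C$ to be minimal, since then every $x\in C$ has a neighbour outside $C$ and no $C\setminus\{x\}$ is a cover, after which unmixedness of $I(G)$ gives $|C|$ equal to the common minimal-cover size. Be warned that the displayed definitions of $L_{2}$ and $L_{3}$ in this paper are interchanged relative to \cite{prt} (there $L_{3}(C)=\{x\in C\mid \mathcal{N}(x)\subseteq C\}$ and $L_{2}(C)=C\setminus(L_{1}(C)\cup L_{3}(C))$); with the paper's literal definitions the lemma is false: for the single edge $(x_{1},x_{2})$ with $w(x_{2})=2$ the ideal $\langle x_{1}x_{2}^{2}\rangle$ is unmixed, yet the minimal (hence strong) cover $C=\{x_{2}\}$ has $L_{1}(C)=L_{2}(C)=\emptyset$ and $L_{3}(C)=\{x_{2}\}$. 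Your peeling step reads off the paper's literal $L_{2}$, which is why the argument drifts; rewritten in the convention of \cite{prt}, the whole proof is a few lines and needs neither the swap construction nor the induction.
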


Let $C$ be a vertex cover of $D_{G}$. The \textit{irreducible ideal associated to} $C$ is defined in \cite{prt} as 
$$I_{C}:=\big< L_{1}(C)\cup\{x_{j}^{w(j)}\mid x_{j}\in L_{2}(C)\cup L_{3}(C)\}\big>.$$

\begin{theorem}[\cite{prt}, Theorem 25]\label{prmdc}
Let $\mathcal{C}_{s}$ denote the set of strong vertex covers of $D_{G}$. Then the irredundant primary decomposition of 
$I(D_{G})$ is given by $I(D_{G}) = \bigcap_{C\in\mathcal{C}_{s}} I_{C}$. Moreover, $$\mathrm{Ass}\,(I(D_{G}))=\{P_{C}\mid P_{C}=\big< C\big>,\, C\in \mathcal{C}_{s}\}.$$
\end{theorem}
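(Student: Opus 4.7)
The plan is to split the proof into three parts: (i) the set-theoretic equality $I(D_G) = \bigcap_{C\in\mathcal{C}_s} I_C$; (ii) irredundancy of this intersection; and (iii) the identification of the associated primes. Since each $I_C$ is generated by pure powers of distinct variables, it is an irreducible monomial ideal with radical $P_C = \langle C\rangle$; hence once (i) and (ii) are established, (iii) follows from the uniqueness of the irredundant irreducible decomposition of a monomial ideal, together with the observation that distinct vertex covers produce distinct primes $P_C$.

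For the forward inclusion $I(D_G)\subseteq I_C$, which in fact holds for every vertex cover $C$ of $G$ and not merely for strong ones, it suffices to check that each generator $x_i x_j^{w(j)}$ of $I(D_G)$ lies in $I_C$. Since $C$ covers $\{x_i,x_j\}$, at least one endpoint is in $C$, and a short case analysis on the partition $C = L_1(C)\cup L_2(C)\cup L_3(C)$ suffices: if $x_i\in L_1(C)$, then $x_i$ already divides the generator; if $x_j\in L_1(C)$, the same holds for $x_j$; and if $x_j\in L_2(C)\cup L_3(C)$, then $x_j^{w(j)}$ divides it. The only remaining subcase, $x_i\in L_2(C)\cup L_3(C)$ with $x_j\notin C$, is vacuous: an outgoing edge $(x_i,x_j)$ to a vertex outside $C$ forces $x_i\in L_1(C)$.

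For the reverse inclusion, I argue contrapositively: given a monomial $m = x_1^{a_1}\cdots x_n^{a_n}\notin I(D_G)$, I construct a strong vertex cover $C = C(m)$ with $m\notin I_C$. The hypothesis translates into: for every edge $(x_i,x_j)\in E(D_G)$, one has $a_i = 0$ or $a_j < w(j)$. A natural candidate is obtained by first placing every $x_j$ with $a_j\geq w(j)$ into $C$, and then iteratively adjoining any $x_i$ with $a_i\geq 1$ having an outgoing edge to a vertex still outside $C$; the resulting fixed point is a vertex cover of $G$ that by construction satisfies $m\notin I_C$. The main obstacle, and the most delicate step, is upgrading this cover to a \emph{strong} one: if some $x\in L_3(C)$ lacks an incoming edge $(y,x)$ with $y\in L_2(C)\cup L_3(C)$ and $w(y)\geq 2$, one needs to modify $C$, typically by swapping $x$ out for a suitably chosen neighbour and iterating. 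Verifying that each such modification preserves both the vertex-cover property and the exclusion $m\notin I_C$ requires careful bookkeeping, and this is where the bulk of the work lies.

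Irredundancy is then proved by exhibiting, for each strong cover $C\in\mathcal{C}_s$, a witness monomial $m_C\in\bigcap_{C'\neq C}I_{C'}\setminus I_C$. The natural candidate is built from the product of variables indexed by $V(G)\setminus C$ and sub-threshold powers $x_j^{w(j)-1}$ for $x_j\in L_2(C)\cup L_3(C)$; the defining property of a strong cover is precisely what is needed to guarantee that $m_C$ is absorbed by every other $I_{C'}$ but not by $I_C$. With the irredundant irreducible decomposition $I(D_G) = \bigcap_{C\in\mathcal{C}_s} I_C$ in hand, the uniqueness theorem for such decompositions of monomial ideals identifies $\mathrm{Ass}(I(D_G))$ with $\{P_C : C\in\mathcal{C}_s\}$, completing the proof.
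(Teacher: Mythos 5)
This statement is quoted from Pitones--Reyes--Toledo (\cite{prt}, Theorem~25) and the paper supplies no proof of it, so your proposal has to stand on its own. Its overall architecture (both inclusions, then irredundancy, then read off $\mathrm{Ass}$ from the uniqueness of the irredundant irreducible decomposition) is reasonable, and the forward inclusion $I(D_G)\subseteq I_C$ for every vertex cover is argued correctly. But the reverse inclusion contains a concrete error. Your candidate cover begins by ``placing every $x_j$ with $a_j\geq w(j)$ into $C$,'' and any such vertex, once in $C$, forces $m\in I_C$ rather than $m\notin I_C$: if it lands in $L_1(C)$ the variable $x_j$ is a generator of $I_C$ and divides $m$ (since $a_j\geq w(j)\geq 1$), and if it lands in $L_2(C)\cup L_3(C)$ the generator $x_j^{w(j)}$ divides $m$. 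Moreover the fixed point of your iteration need not be a vertex cover: an edge both of whose endpoints have exponent $0$ in $m$ is never covered. The construction you want is essentially the complement, $C=\{x_i\mid a_i<w(i)\}$. This is a vertex cover (if $x_j\notin C$ then $a_j\geq w(j)$, and $m\notin I(D_G)$ forces $a_i=0<w(i)$ for every $(x_i,x_j)\in E(D_G)$), every $x\in L_1(C)$ has $a_x=0$, and every $x\in L_2(C)\cup L_3(C)$ has $a_x<w(x)$, whence $m\notin I_C$.

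Even after that repair, the decisive step is the one you explicitly defer: replacing this (arbitrary) vertex cover by a \emph{strong} vertex cover $C'$ with $I_{C'}\subseteq I_C$, or at least with $m\notin I_{C'}$. Without it you only obtain $I(D_G)=\bigcap_{C}I_C$ over \emph{all} vertex covers, and the theorem's entire content is that the strong ones suffice and are exactly the ones that survive; the swapping-and-bookkeeping you describe as ``where the bulk of the work lies'' is precisely the proof, and it is absent. The irredundancy step has the same character: the witness monomial $\prod_{x\notin C}x\cdot\prod_{x_j\in L_2(C)\cup L_3(C)}x_j^{w(j)-1}$ is correctly seen to lie outside $I_C$, but the claim that it lies in $I_{C'}$ for every other strong cover $C'$ (including covers comparable to $C$ under inclusion) is asserted, not checked, and it is exactly here that the strong-cover condition must be deployed. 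As it stands the proposal is a plausible outline with the two load-bearing verifications missing and one construction reversed.
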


\noindent We say $D_{G}$ or $I(D_{G})$ is Cohen-Macaulay (respectively, unmixed) if the quotient ring $A/I(D_{G})$ is Cohen-Macaulay (respectively, if each associated prime of $I(D_{G})$ has 
the same height).

\begin{definition}{\rm
A vertex $v$ of a graph $G$ is called a 
\textit{simplicial} vertex if the subgraph of $G$
induced by the vertex set $\{v\}\cup \mathcal{N}(v)$ is a complete graph.
}
\end{definition}

A graph $G$ on $n$ vertices is said to have a 
\textit{perfect elimination ordering} if there is a linear ordering $(v_{1},\ldots,v_{n})$ of 
vertices of $G$ such that each $v_{i}$ is the 
simplicial vertex of the subgraph induced by 
the vertices $\{v_{1},\ldots,v_{i}\}$.

\begin{example}{\rm
Let us understand with an example.
\medskip

\begin{center}
\begin{tikzpicture}
  [scale=.4,auto=left,every node/.style={circle,scale=0.5}]
 
  \node[draw,fill=blue!20] (n1) at (0,0)  {$1$};
  \node[draw,fill=blue!20] (n2) at (-3,-3)  {$2$};
  \node[draw,fill=blue!20] (n3) at (-3,-6) {$3$};
   \node[draw,fill=blue!20] (n4) at (3,-6) {$4$};
   \node[draw,fill=blue!20] (n5) at (3,-3) {$5$};
 
\node[scale=2] (n6) at (0,-3){$G$};
 
  \foreach \from/\to in {n1/n2,n1/n3,n1/n4,n1/n2, n2/n3, n1/n5, n4/n5, n3/n4}
    \draw[] (\from) -- (\to);
   
\end{tikzpicture}
\end{center} 
\medskip

\noindent In the above graph $G$, the vertices $1,3,4$ are not simplicial but $2,5$ are simplicial. Moreover, $(1,3,4,2,5)$ is a perfect elimination ordering for $G$. }
\end{example}

\begin{definition}{\rm 
A graph $G$ is said to be \textit{chordal} if any induced cycle of $G$ with length $\geq 4$ has a chord. For example the above graph $G$ is chordal.
}
\end{definition}

\noindent The following theorem, proved by Fulkerson and
Gross in \cite{fg}, gives a necessary and sufficient condition 
for a graph to be chordal.

\begin{theorem}
A graph $G$ is chordal if and only if $G$ has a perfect elimination ordering.
\end{theorem}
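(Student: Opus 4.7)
The plan is to treat the two implications separately. The easier direction (PEO implies chordal) I would handle by a direct contradiction argument: if $(v_{1},\ldots,v_{n})$ is a PEO of $G$ and $C : u_{1}u_{2}\cdots u_{k}u_{1}$ is an induced cycle of length $k \geq 4$ with no chord, let $v_{i}$ be the vertex of $C$ with the largest PEO index. Then every vertex of $C$, including both neighbors of $v_{i}$ in $C$, lies in $\{v_{1},\ldots,v_{i}\}$. Since $v_{i}$ is simplicial in the induced subgraph on $\{v_{1},\ldots,v_{i}\}$, those two neighbors of $v_{i}$ in $C$ must be adjacent, producing a chord of $C$, a contradiction.

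\medskip

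For the converse (chordal implies PEO), the proof is by induction on $n = |V(G)|$. The reduction step is standard: if we can find a simplicial vertex $v$ of $G$, then $G - v$ is again chordal (chordality is closed under induced subgraphs), the inductive hypothesis gives a PEO $(v_{1},\ldots,v_{n-1})$ of $G - v$, and setting $v_{n} = v$ extends it to a PEO of $G$.

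\medskip

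The real content is therefore the existence of a simplicial vertex in any chordal graph, for which I would prove the sharper classical lemma of Dirac: a chordal graph is either complete or has two non-adjacent simplicial vertices. The proof of this rests on a preliminary lemma saying that in a chordal graph any minimal vertex separator $S$ induces a clique; one shows this by taking $x, y \in S$, constructing shortest $xy$-paths $P_{a}, P_{b}$ through the two components $C_{a}, C_{b}$ of $G - S$ on either side (such paths exist by minimality of $S$, which forces each vertex of $S$ to have neighbors in both $C_{a}$ and $C_{b}$), and applying chordality to the cycle $P_{a} + P_{b}$, whose length is at least $4$. Because $S$ separates $C_{a}$ from $C_{b}$ and each of $P_{a}, P_{b}$ is a shortest path, the only possible chord is the edge $xy$. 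Given this clique-separator lemma, Dirac's statement is proved by inducting on $|V(G)|$, splitting $G$ along a minimal $a$--$b$ separator $S$ for a pair of non-adjacent vertices $a, b$, and applying the inductive hypothesis to each of the smaller chordal graphs $G[V(C_{a}) \cup S]$ and $G[V(C_{b}) \cup S]$; in each one, either the subgraph is already complete (in which case $a$, respectively $b$, is simplicial in $G$) or at least one of the two non-adjacent simplicial vertices provided by induction lies outside the clique $S$ and hence remains simplicial in $G$ itself. The two simplicial vertices produced, one in $C_{a}$ and one in $C_{b}$, are non-adjacent because $S$ separates them.

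\medskip

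The main obstacle is the clique-separator lemma; the rest is a straightforward double induction. Care is required there in choosing $P_{a}, P_{b}$ as genuinely shortest paths, so that every candidate chord other than $xy$ is ruled out either by the shortness of $P_{a}, P_{b}$ or by the fact that $S$ disconnects $C_{a}$ from $C_{b}$.
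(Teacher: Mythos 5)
The paper offers no proof of this statement at all: it is quoted as a known theorem of Fulkerson and Gross with a citation to \cite{fg}, so there is no in-paper argument to compare yours against. Your proposal is a correct and complete plan, and it is the classical proof. The easy direction (taking the vertex of a chordless cycle of length $\geq 4$ that occurs last in the ordering and using its simpliciality in the corresponding prefix to manufacture a chord) is sound as stated. For the converse you correctly reduce everything to the existence of a simplicial vertex, prove that via Dirac's lemma (a chordal graph is complete or has two non-adjacent simplicial vertices), and base that on the clique-separator lemma; you also correctly flag the one delicate point, namely that $P_{a}$ and $P_{b}$ must be chosen shortest among $x$--$y$ paths with interior in $C_{a}$ (respectively $C_{b}$), so that every chord of the cycle $P_{a}+P_{b}$ other than $xy$ is excluded either by minimality of length or by the fact that $S$ separates $C_{a}$ from $C_{b}$. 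Two small points worth writing out in a full version: a simplicial vertex $v$ of $G[V(C_{a})\cup S]$ lying in $C_{a}$ remains simplicial in $G$ precisely because $\mathcal{N}(v)\subseteq V(C_{a})\cup S$; and in the case where $G[V(C_{a})\cup S]$ is complete, the vertex $a$ itself is simplicial in $G$ for the same reason. Neither is an obstacle; your argument is correct.
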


In section 5, we use the technique of polarization to discuss 
the Cohen-Macaulay property of the Alexander dual of the 
weighted oriented edge ideals. Let us recall the notion 
of polarization from \cite{peeva} first. We use our own 
notation for convenience. Let 
$A=K[x_{1},\ldots,x_{n}]$; a monomial 
$x_{1}^{a_{1}}\cdots x_{n}^{a_{n}}$ in $A$ is 
written as $\mathbf{x^{a}}$, where $\mathbf{a}=(a_{1},\ldots,a_{n})\in \mathbb{N}^{n}$.

\begin{definition} [\cite{peeva}, Construction 21.7]{\rm 
The \textit{polarization} of the monomials of type $x_{i}^{a_{i}}$ is defined as $x_{i}^{a_{i}}(\mathrm{pol})=\prod_{j=1}^{a_{i}} x_{i,j}$ and the \textit{polarization} of $\mathbf{x^{a}}=x_{1}^{a_{1}}\cdots x_{n}^{a_{n}}$ is defined to be 
$$\mathbf{x^{a}}(\mathrm{pol})=x_{1}^{a_{1}}(\mathrm{pol})\cdots x_{n}^{a_{n}}(\mathrm{pol}).$$ 
For a monomial ideal $I=\big< \mathbf{x^{a_{1}}},\ldots,\mathbf{x^{a_{n}}}\big>\subseteq A$, the \textit{polarization} $I(\mathrm{pol})$ is defined to be the square-free monomial ideal
$$ I(\mathrm{pol})=\big< \mathbf{x^{a_{1}}}(\mathrm{pol}),\ldots,\mathbf{x^{a_{n}}}(\mathrm{pol})\big>$$
in the ring $A(\mathrm{
pol})=K[x_{i,j}\mid 1\leq i\leq n,\, 1\leq j\leq r_{i}]$ , where $r_{i}$ is the power of $x_{i}$ in \, $\mbox{lcm}(\mathbf{x^{a_{1}}},\ldots,\mathbf{x^{a_{n}}})$.
}
\end{definition}

\begin{theorem}[\cite{frob}]\label{cmpol}
Let $I$ be a monomial ideal of the polynomial ring $A$. Then $A/I$ is Cohen-Macaulay if and only if $A(\mathrm{pol})/I(\mathrm{pol})$ is Cohen-Macaulay.
\end{theorem}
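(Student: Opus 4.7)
\medskip

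\noindent\textbf{Proof proposal.} The plan is to realise $A/I$ as a quotient of $A(\mathrm{pol})/I(\mathrm{pol})$ by a regular sequence of linear forms, and then invoke the standard fact that modding out by a regular sequence preserves Cohen-Macaulayness in both directions. Concretely, for each variable $x_{i}$ that has been split into $x_{i,1},\ldots,x_{i,r_{i}}$ in $A(\mathrm{pol})$, I would introduce the linear forms
\[
\theta_{i,j}\;=\;x_{i,1}-x_{i,j},\qquad 2\le j\le r_{i},
\]
and assemble them into one finite sequence $\Theta=(\theta_{i,j})$. The total number of these forms is $\sum_{i}(r_{i}-1)=\dim A(\mathrm{pol})-\dim A$, which is exactly the codimension jump one needs.

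Next I would verify the two ring-theoretic facts that drive the argument. First, the substitution $x_{i,j}\mapsto x_{i}$ induces a surjection $A(\mathrm{pol})\twoheadrightarrow A$ with kernel $(\Theta)$, and under this map $I(\mathrm{pol})$ is sent to $I$, giving the isomorphism
\[
A(\mathrm{pol})/\bigl(I(\mathrm{pol})+(\Theta)\bigr)\;\cong\;A/I.
\]
Second, and this is the crux, the sequence $\Theta$ is a regular sequence on $A(\mathrm{pol})/I(\mathrm{pol})$. To establish this I would use that $I(\mathrm{pol})$ is squarefree, so its associated primes are generated by subsets of the variables $\{x_{i,j}\}$; one then checks that no such prime can contain any $\theta_{i,j}$, because each $x_{i,j}$ appears in some minimal generator of $I(\mathrm{pol})$ only alongside distinct variables. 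Working inductively, one argues that at each stage the associated primes of the successive quotient remain squarefree monomial primes, and none contains the next $\theta_{i,j}$; this is the place where one must be careful.

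With these two facts in hand the conclusion is immediate from the following standard principle: if $R$ is a Noetherian graded (or local) ring, $J\subset R$ a homogeneous ideal, and $\underline{\theta}$ a regular sequence on $R/J$ consisting of linear forms, then $R/J$ is Cohen-Macaulay if and only if $R/(J+(\underline{\theta}))$ is Cohen-Macaulay. Applying this to $R=A(\mathrm{pol})$, $J=I(\mathrm{pol})$, and $\underline{\theta}=\Theta$ yields exactly the equivalence claimed in the theorem.

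The main obstacle, as indicated, is verifying that $\Theta$ is a regular sequence on $A(\mathrm{pol})/I(\mathrm{pol})$. The combinatorial heart is the observation that in the construction of polarization the new variables $x_{i,1},\ldots,x_{i,r_{i}}$ play symmetric but non-redundant roles inside $I(\mathrm{pol})$, so that the difference $x_{i,1}-x_{i,j}$ is not a zero-divisor modulo $I(\mathrm{pol})$ and, more generally, successive such differences avoid all associated primes. This is exactly the content of Fröberg's argument in \cite{frob}.
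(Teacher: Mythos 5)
The paper does not prove this statement at all: it is imported verbatim from Fr\"oberg \cite{frob} (see also Peeva's book, which the authors cite for the polarization construction), so there is no in-paper argument to compare against. Your outline is the standard proof of that cited result: depolarize via the linear forms $\theta_{i,j}=x_{i,1}-x_{i,j}$, check that $A(\mathrm{pol})/(I(\mathrm{pol})+(\Theta))\cong A/I$, show $\Theta$ is a regular sequence on $A(\mathrm{pol})/I(\mathrm{pol})$, and use that a graded quotient by a regular sequence of linear forms is Cohen--Macaulay if and only if the original ring is. The isomorphism step and the dimension count $\sum_i(r_i-1)=\dim A(\mathrm{pol})-\dim A$ are fine.

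The one place where your sketch, taken literally, would fail is the induction for regularity. Your justification works for the \emph{first} form: $I(\mathrm{pol})$ is squarefree, hence radical, so its associated primes are the minimal monomial primes, and a minimal prime containing both $x_{i,1}$ and $x_{i,j}$ ($j\ge 2$) is impossible because every generator of $I(\mathrm{pol})$ involving $x_{i,j}$ already involves $x_{i,1}$, so $x_{i,j}$ could be deleted from the cover. But after quotienting by $\theta_{i,2}$ the resulting ideal is a \emph{partial} polarization containing genuine powers such as $x_{i,1}^{2}x_{i,3}\cdots$; it is no longer squarefree, its associated primes need not be minimal, and the claim that they ``remain squarefree monomial primes'' is false as stated. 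The standard repair is to prove a one-step lemma: for any monomial ideal $J$ and its one-step polarization $J'$ (splitting a single power $x_i^{a}$ into $x_{i}^{a-1}x_{i,\mathrm{new}}$), the form $x_i-x_{i,\mathrm{new}}$ is a non-zerodivisor on the quotient, verified by a direct monomial computation on $(J'+(x_i-x_{i,\mathrm{new}})):\!$ rather than by locating associated primes; the full statement then follows by iterating, since $A(\mathrm{pol})/I(\mathrm{pol})$ is obtained from $A/I$ by a chain of one-step polarizations. With that substitution your argument is complete and is exactly Fr\"oberg's.
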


\section{A Construction of Cohen-Macaulay Weighted Oriented Graphs}
\begin{theorem}[\cite{hlmrv}, Theorem 3.1]\label{wocm}
Let $D$ be a weighted oriented graph and $G$ be its underlying graph. Suppose that $G$ has a perfect matching $\lbrace x_{1},y_{1}\rbrace,\ldots,\lbrace x_{r},y_{r}\rbrace$, where $y_{i}$'s are leaf vertices. Then the following are equivalent:
\begin{enumerate}
\item[(a)] $D$ is a Cohen-Macaulay weighted oriented graph;
\item[(b)] $I(D)$ is unmixed; that is, all its associated primes have the same height;
\item[(c)] $w(x_{s})=1$ for every edge $(x_{s}, y_{s})$ of $D$.
\end{enumerate}
\end{theorem}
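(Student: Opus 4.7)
The plan is to prove the cycle (a) $\Rightarrow$ (b) $\Rightarrow$ (c) $\Rightarrow$ (a). The implication (a) $\Rightarrow$ (b) is standard, since any Cohen-Macaulay quotient of $A$ is equidimensional.

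For (b) $\Rightarrow$ (c), I argue by contrapositive: assume some edge $(x_s,y_s) \in E(D)$ has $w(x_s) \geq 2$. Note that $C_0 = \{x_1, \dots, x_r\}$ is a minimal vertex cover of $G$ of size $r$. The goal is to exhibit a strong vertex cover $C$ of $D$ contributing an irredundant associated prime of $I(D)$ of height different from $r$, contradicting unmixedness via Theorem~\ref{prmdc} and Lemma~\ref{unm}. The natural candidate is $C = C_0 \cup \{y_s\}$. Because $y_s$ is a leaf adjacent only to $x_s$, and because each $y_j$ for $j \neq s$ is a leaf not adjacent to $x_s$, one checks $\mathcal{N}(y_s) \subseteq C$ and $\mathcal{N}(x_s) \subseteq C$, so $y_s, x_s \in L_2(C)$; each $x_i$ with $i \neq s$ lies in $L_1(C)$ or $L_3(C)$ according to the orientation of $\{x_i, y_i\}$. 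The hypothesis $w(x_s) \geq 2$ is precisely what prevents $I_C$ from being absorbed into $I_{C_0}$: the generator $x_s^{w(x_s)}$ of $I_C$ does not reduce inside $I_{C_0}$, so $\langle C \rangle$ appears as an associated prime of height $r+1$.

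For (c) $\Rightarrow$ (a), I proceed by induction on $r$. The base case $r = 1$ is immediate since $I(D)$ is then a principal monomial ideal. For the inductive step, pick a leaf $y_r$ and use the short exact sequence
\[
0 \longrightarrow A/(I(D) : y_r) \longrightarrow A/I(D) \longrightarrow A/(I(D) + \langle y_r \rangle) \longrightarrow 0.
\]
The right-hand module reduces (after killing the leaf $y_r$) to the weighted oriented edge ideal obtained by deleting $y_r$ and its now-isolated partner $x_r$, while $(I(D) : y_r)$ corresponds to deleting the closed neighbourhood $\{x_r, y_r\}$. Both smaller graphs inherit the perfect-matching-by-leaves hypothesis and condition (c), so the inductive hypothesis applies to give Cohen-Macaulayness of both outer modules. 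The depth lemma then forces $A/I(D)$ to be Cohen-Macaulay.

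The principal obstacle lies in the construction in (b) $\Rightarrow$ (c): one must verify that the enlargement $C = C_0 \cup \{y_s\}$ is in fact a strong vertex cover whose irreducible ideal $I_C$ survives the irredundancy check in the primary decomposition. When several matching edges are oriented as $(y_i, x_i)$, the candidate $C$ may fail to be strong because the required weight-$\neq 1$ witness for some $x_i \in L_3(C)$ need not exist, and one has to enlarge $C$ further by adjoining these $y_i$; a careful bookkeeping of the $L_1/L_2/L_3$ classes through this procedure is the technical heart of the argument.
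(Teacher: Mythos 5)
The paper does not prove Theorem \ref{wocm}; it is imported verbatim from \cite{hlmrv}, so there is no internal proof to compare with and your attempt must stand on its own. It does not: both nontrivial implications have genuine gaps. In (b) $\Rightarrow$ (c), the hypothesis $w(x_{s})\neq 1$ never actually does any work. By Theorem \ref{prmdc} the decomposition $\bigcap_{C\in\mathcal{C}_{s}}I_{C}$ is already irredundant and \emph{every} strong vertex cover contributes an associated prime, so there is no later stage at which $I_{C}$ could be ``absorbed into $I_{C_{0}}$''; the weight must instead be used to certify that your cover $C$ is strong in the first place. In the definition actually used in \cite{prt} and \cite{hlmrv}, $L_{3}(C)$ is the set of vertices of $C$ whose entire neighbourhood lies inside $C$ (the paper's displayed definition interchanges $L_{2}$ and $L_{3}$; read literally it would make $V(G)$ a strong cover of every $D_{G}$), so $y_{s}\in L_{3}(C)$ and the witness it requires is precisely the edge $(x_{s},y_{s})$ with $x_{s}\in L_{2}(C)\cup L_{3}(C)$ and $w(x_{s})\neq 1$. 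Your argument places $y_{s}$ in $L_{2}(C)$, demands no witness for it, and would therefore manufacture a height-$(r+1)$ ``strong'' cover even when all weights equal $1$ --- it proves too much. Moreover $x_{s}$ itself lies in $L_{3}(C_{0}\cup\{y_{s}\})$ and needs its own witness, which fails, for instance, whenever every in-neighbour of $x_{s}$ has weight one; adjoining further $y_{i}$'s does not repair this (it only enlarges $L_{3}$), and one must instead trade some $x_{j}$'s for their partners $y_{j}$. This verification, which you defer as ``bookkeeping'', is the entire content of the implication.

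The induction in (c) $\Rightarrow$ (a) also does not close. In your exact sequence the colon term does correspond to $D\setminus\{x_{r},y_{r}\}$ (times a polynomial variable) and inherits the hypotheses, but the cokernel $A/(I(D)+\langle y_{r}\rangle)$ is the edge ideal of $D$ with only the leaf $y_{r}$ deleted, and $x_{r}$ is not ``now-isolated'': it remains adjacent to every $x_{j}$ it was adjacent to in $G$. That graph has an unmatched non-leaf vertex, so the inductive hypothesis does not apply to it, and proving it Cohen--Macaulay is essentially the original problem over again. A workable substitute is the system-of-parameters argument the paper itself runs in Proposition \ref{wocons1}: the $r$ binomials $x_{i}-y_{i}^{w(y_{i})}$ (respectively $y_{i}-x_{i}^{w(x_{i})}$, according to the orientation of the whisker) cut $A/I(D)$ down to an Artinian ring, hence form a homogeneous system of parameters of length $\mathrm{dim}\,(A/I(D))=r$, and the task is then to show that condition (c) makes this system a regular sequence.
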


Start with an arbitrary weighted oriented graph $D_{G}$, whose underlying
graph is $G$ on the vertex set $\lbrace x_{1},\ldots,x_{n}\rbrace$. Let $G^{\prime}$ be the new graph obtained from 
$G$ by adding whiskers $\lbrace x_{1},y_{1}\rbrace$, 
$\ldots$, $\lbrace x_{n},y_{n}\rbrace$ to $G$. 
We now construct a 
new Cohen-Macaulay weighted oriented graph 
$D_{G^{\prime}}$, whose underlying graph is 
$G^{\prime}$. If $w(x_{i}) = 1$, then take any 
orientation $x_{i}$ to $y_{i}$ or $y_{i}$ to $x_{i}$ in $D_{G^{\prime}}$. If $w(x_{i})\not = 1$, then take the orientation $y_{i}$ to $x_{i}$ in $D_{G^{\prime}}$ and we may assign any weight to $y_{i}$. Then by theorem \ref{wocm} $D_{G^{\prime}}$ is a Cohen-Macaulay graph and $D_{G}$ is a weighted oriented subgraph of $D_{G^{\prime}}$.
\medskip

\noindent\textbf{First construction.} Let $D_{H}$ be a weighted oriented graph with the 
vertex set $V(D_{H})=\lbrace x_{1},\ldots,x_{n}, z, y\rbrace$ 
and the edge ideal 
$I(D_{H})$. Assume that $z$ is adjacent to $y$ with 
$\deg(z)\geq 2$ and $\deg(y) = 1$. We label the vertices 
of $D_{H}$ in such way that $x_{1},\ldots,x_{k},y$ 
are only adjacent to $z$, as shown in the
figure below. We also assume that $(x_{i},z)\in E(D_{H})$, for $i=1,\ldots,k$. 

\begin{center}
\begin{tikzpicture}
  [scale=.6,auto=left,every node/.style={circle,fill=blue!20}]
  \node (n1) at (10,10) {$y$};
  \node (n2) at (10,5)  {$z$};
  \node (n4) at (5,0)  {$x_{1}$};
  \node (n3) at (10,0) {$x_{2}$};
   \node (n5) at (15,0) {$x_{k}$};

  \foreach \from/\to in {n2/n4,n2/n3,n2/n5}
    \draw[<-] (\from) -- (\to);
    \draw (n1) -- (n2);
    
\end{tikzpicture}
\end{center}

The next two results describe how the Cohen-Macaulay property of $D_{H}$ relates to that of the two weighted oriented subgraphs $D_{G} = D_{H}\setminus\lbrace z, y\rbrace$ and $D_{F} = D_{G}\setminus \lbrace x_{1},\ldots,x_{k}\rbrace$. We have $\big<I(D_{G}), x_{1},\ldots,x_{k}\big> = \big<I(D_{F}), x_{1},\ldots,x_{k}\big>=J\,\, \text{(say)}$. 
\medskip

Assume $H$ is unmixed with height of $I(D_{H})$ equal to $g+1$. Since $z$ is not isolated, there is a minimal prime $p$ over $I(D_{G})$ containing $\lbrace x_{1},\ldots,x_{k}\rbrace$ and such that $\mathrm{ht}\,(I(D_{G})) = \mathrm{ht}\,(p) = g$. Clearly, $k < n$ and deg$(x_{i})\geq 2$ for $i = 1,\ldots,k$.

\begin{proposition}\label{wocons1}
If $D_{H}$ is a Cohen-Macaulay graph, then $D_{F}$ and $D_{G}$ are Cohen-Macaulay graphs.
\end{proposition}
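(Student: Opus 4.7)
The plan is to show that Cohen-Macaulayness of $A/I(D_H)$ forces a regular sequence structure on $A/I(D_H)$ whose quotient has $B_F/I(D_F)$ as a tensor factor (where $B_F = K[x_{k+1},\ldots,x_n]$). First I would prove that $y$ is a non-zero-divisor on $A/I(D_H)$. By Theorem \ref{prmdc}, every associated prime of $I(D_H)$ equals $P_C$ for a strong vertex cover $C$; Cohen-Macaulayness forces unmixedness, so every such $C$ has size $g+1$. The combinatorial claim is that no such $C$ contains $y$: if $y \in C$ then $z \notin C$ (else $C \setminus \{y\}$ would be a strictly smaller cover, since $\deg(y) = 1$), which forces $x_1,\ldots,x_k \in C$ to cover the edges $\{x_i, z\}$, giving $|C| \geq k+1+g > g+1$ because $k \geq 1$. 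Hence $y$ is regular, and $A/(I(D_H), y) \cong B[z]/I(D_{H'})$ is Cohen-Macaulay, where $D_{H'} = D_H \setminus \{y\}$.

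Next I would iterate the same height argument: for $1 \leq j \leq k-1$, after modding out $y, x_1, \ldots, x_{j-1}$, the resulting ring is (up to isomorphism) the edge ring of $D_{H'} \setminus \{x_1, \ldots, x_{j-1}\}$, in which $z$ still has $k-j+1 \geq 2$ pendant neighbors. The same combinatorial argument shows that $x_j$ lies outside every (strong $=$ minimum) vertex cover of the smaller graph, hence is regular on the successive quotient. Thus $y, x_1, \ldots, x_{k-1}$ forms a regular sequence on $A/I(D_H)$.

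Modding out this regular sequence leaves the Cohen-Macaulay ring
\[
K[x_k, x_{k+1}, \ldots, x_n, z] \big/ (I(D_F), x_k z^{w(z)}) \;\cong\; (B_F/I(D_F)) \otimes_K K[x_k, z]/(x_k z^{w(z)}).
\]
The second tensor factor is a one-dimensional Cohen-Macaulay ring (quotient of a polynomial ring by a single non-zero-divisor), and a tensor product of two finitely generated $K$-algebras is Cohen-Macaulay iff each factor is. Hence $B_F/I(D_F)$ is Cohen-Macaulay, i.e., $D_F$ is Cohen-Macaulay. Finally, since $x_1, \ldots, x_k$ are isolated in $D_G$, we have $B/I(D_G) \cong (B_F/I(D_F))[x_1, \ldots, x_k]$, which is Cohen-Macaulay iff $B_F/I(D_F)$ is; hence $D_G$ is Cohen-Macaulay as well.

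The main obstacle is the combinatorial/height computation in the first two steps: one must verify that unmixedness (coming from Cohen-Macaulayness) combined with the leaf structure at $z$ rules out $y$ and successively $x_1, \ldots, x_{k-1}$ from appearing in any strong vertex cover of the reduced graphs. The orientation of the $y$-$z$ edge and the weight $w(y)$ play no role in this analysis, since the corresponding generator of $I(D_H)$ is divisible by $y$ in both cases.
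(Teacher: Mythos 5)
Your proof breaks at its very first step: $y$ is in general a zero divisor on $R/I(D_{H})$, so the regular sequence $y,x_{1},\ldots,x_{k-1}$ you build does not exist. The error is the inequality $\vert C\vert\geq k+1+g$: you count $\{x_{1},\ldots,x_{k}\}$ and a size-$g$ vertex cover of $G$ as disjoint contributions, but the standing hypothesis recorded just before Proposition \ref{wocons1} is precisely that there \emph{is} a minimal vertex cover $C'$ of $G$ of minimum size $g$ containing $x_{1},\ldots,x_{k}$. Then $C=\{y\}\cup C'$ covers every edge of $H$ (each $\{x_{i},z\}$ by $x_{i}$, and $\{y,z\}$ by $y$), is minimal, and has size $g+1$; being minimal it is strong, so by Theorem \ref{prmdc} the prime $\langle C\rangle$ is associated to $I(D_{H})$ and contains $y$. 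Concretely, take $H=P_{4}$ with vertices $x_{2}-x_{1}-z-y$ (so $n=2$, $k=1$, $g=1$): $I(D_{H})$ is Cohen--Macaulay and unmixed by Proposition \ref{cyc1}, yet $\{y,x_{1}\}$ is an associated prime, and already in the unweighted case $y\cdot zx_{2}\in I(H)$ while $zx_{2}\notin I(H)$. Two further claims are also false and would sink the argument even if $y$ were regular: the vertices $x_{1},\ldots,x_{k}$ are \emph{not} isolated in $D_{G}$ (the setup forces $\deg(x_{i})\geq 2$ in $H$, so each $x_{i}$ keeps a neighbour in $G$), so $A/I(D_{G})$ is not a polynomial extension of $B_{F}/I(D_{F})$; and for the same reason the quotient by $y,x_{1},\ldots,x_{k-1}$ retains the generators of $I(D_{G})$ involving $x_{k}$, so it is not the tensor product $(B_{F}/I(D_{F}))\otimes_{K}K[x_{k},z]/(x_{k}z^{w(z)})$.

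The paper's proof works with $z$ instead of $y$, and this is the essential point you are missing. For $D_{G}$ it extends a homogeneous system of parameters $f_{1},\ldots,f_{d}$ of $A/I(D_{G})$ by the single element $y-z^{w(z)}$ (or $z-y^{w(y)}$, depending on the orientation of the edge between $y$ and $z$) to a system of parameters of $R/I(D_{H})$, and Cohen--Macaulayness of $R/I(D_{H})$ then forces $f_{1},\ldots,f_{d}$ to be a regular sequence on $A/I(D_{G})$. For $D_{F}$ it uses the exact sequence
$$0\longrightarrow R/(I(D_{H}):z^{t})[-t]\longrightarrow R/I(D_{H})\longrightarrow R/\langle I(D_{H}),z^{t}\rangle\longrightarrow 0$$
with $t=w(z)$, the observation that $(I(D_{H}):z^{t})=\langle I(D_{F}),x_{1},\ldots,x_{k},y\rangle$ (or with $y^{w(y)}$ in place of $y$), and the depth lemma. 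Coloning by a power of $z$ makes all of $x_{1},\ldots,x_{k}$ (and $y$) appear simultaneously as variables modulo which one lands on $I(D_{F})$, without ever requiring any of them to be a nonzerodivisor --- which, as the $P_{4}$ example shows, they need not be.
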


\begin{proof}
Set $A=K[x_{1},\ldots,x_{n}]$ and $R=A[y,z]$. Let us take $\mathrm{dim}\,(A/I(D_{G}))=d$. Then by (\cite{vil},  Proposition 3.1.23) there exists a homogeneous system of parameters $\lbrace f_{1},\ldots,f_{d}\rbrace$ for $A/I(D_{G})$, where $f_{i}\in A_{+}$ for all $i$. Now $\mathrm{dim}\,(A/(I(D_{G})+\big<f_{1},\ldots,f_{d}\big>))=0$ and $I(D_{G})+\big<f_{1},\ldots,f_{d}\big>\subset \big<x_{1},\ldots,x_{n}\big>$. So $\big<x_{1},\ldots,x_{n},y\big>\subset \big<x_{1},\ldots,x_{n},y,z\big>$ is a composition series
of prime ideal in R containing $I(D_{H})+(f_{1},\ldots,f_{d})$. Therefore 
$$\mathrm{dim}\,(R/(I(D_{H})+\big<f_{1},\ldots,f_{d}\big>))= 1 = d+1-d$$ and so we have $\lbrace f_{1},\ldots,f_{d}\rbrace$ as a part of system of parameters for $R/I(D_{H})$. Let $w(y)=r$ and $w(z)=t$. If $(y,z)\in E(D_{H})$,
then $yz^{t}\in I(D_{H})$. In this case we have
$$y(y - z^{t})+yz^{t}=y^{2}\,\,\mathrm{and}\,\, z^{t}(z^{t}-y)+yz^{t}= z^{2t}.$$
Using the above equalities we can say $(f_{1},\ldots,f_{d}, y - z^{t})$ is $M/I(D_{H})$-primary, where $M=(x_{1},\ldots,x_{n},y,z)$. Thus we can say $\lbrace f_{1},\ldots,f_{d},y-z^{t}\rbrace$ is a regular system of parameters for $R/I(D_{H})$. Hence $f_{1},\ldots,f_{d}$ is a regular sequence on $A/I(D_{G})$, that is, $D_{G}$ is Cohen-Macaulay. Similarly
if $(z,y)\in E(D_{H})$, then $zy^{r}\in I(D_{H})$. In this case $\lbrace f_{1},\ldots,f_{d},z-y^{r}\rbrace$ is a regular system of parameters for $R/I(D_{H})$ and similarly $D_{G}$ is Cohen-Macaulay. To show the Cohen-Macaulay property for $D_{F}$ we consider two cases.
\medskip

\noindent\textbf{Case I.} Let $(y,z)\in E(D_{H})$. In this case consider the exact sequence
{\fontsize{13}{14}\selectfont
$$ 0\longrightarrow R/(I(D_{H}):z^{t})[-t]\overset{z^{t}}{\longrightarrow}R/I(D_{H})\overset{\psi}{\longrightarrow} R/\big<I(D_{H}),z^{t}\big>\longrightarrow 0$$
}
which is equivalent to
{\fontsize{13}{14}\selectfont
$$ 0\longrightarrow R/\big<J,y\big>[-t]\overset{z^{t}}{\longrightarrow}R/I(D_{H})\overset{\psi}{\longrightarrow} R/\big<I(D_{G}),z^{t}\big>\longrightarrow 0$$
}
where the first map is multiplication by $z^{t}$ and $\psi$ is induced by a projection. Now $\mathrm{depth}\,(R/I(D_{H}))= \mathrm{dim}\,(R/I(D_{H}))=n-g+1$ and $\mathrm{depth}\,(R/\big<I(D_{G}),z^{t}\big>)=1+\mathrm{dim}\,(A/I(D_{G}))=n-g+1$, where $g=\mathrm{height}\,(I(D_{G}))$. Using the depth lemma we have 
$$n-g+1\leq \mathrm{depth}\,(R/\big<J,y\big>).$$
Also we have 
{\fontsize{13}{14}\selectfont
$$\mathrm{dim}\,(R/I(D_{H}))=\mathrm{max}\,\lbrace \mathrm{dim}\,(R/\big<J,y\big>),\mathrm{dim}\,(R/\big<I(D_{G}),z^{t}\big>)\rbrace,$$
}
which imply $\mathrm{dim}\,(R/\big<J,y\big>)\leq n-g+1$. Therefore $$R/\big<I(D_{G}),x_{1},\ldots,x_{k},y\big>=R/\big<I(D_{F}),x_{1},\ldots,x_{k},y\big>$$ is Cohen-Macaulay and so $D_{F}$ is Cohen-Macaulay.
\medskip

\noindent\textbf{Case II.} Let $(z,y)\in E(D_{H})$. In this case consider the exact sequence
{\fontsize{13}{14}\selectfont
$$ 0\longrightarrow R/(I(D_{H}):z^{t})[-t]\overset{z^{t}}{\longrightarrow}R/I(D_{H})\overset{\psi}{\longrightarrow} R/\big<I(D_{H}),z^{t}\big>\longrightarrow 0$$}
which is equivalent to
{\fontsize{13}{14}\selectfont
$$ 0\longrightarrow R/\big<J,y^{r}\big>[-t]\overset{z^{t}}{\longrightarrow}R/I(D_{H})\overset{\psi}{\longrightarrow} R/\big<I(D_{G}),z^{t},y^{r}z\big>\longrightarrow 0$$}
where the first map is multiplication by $z^{t}$ and $\psi$ is induced by a projection. In this case we have
$$\mathrm{depth}\,(R/\big<I(D_{G}),z^{t},y^{r}z\big>)=0+\mathrm{dim}\,(A/I(D_{G}))=n-g.$$ So using the depth lemma we get 
$$n-g+1\leq \mathrm{depth}\,(R/\big<J,y^{r}\big>).$$
Similarly as in case-I, we have $\mathrm{dim}\,(R/\big<J,y^{r}\big>)\leq n-g+1$. Therefore $R/\big<J,y^{r}\big>$ is Cohen-Macaulay and so $D_{F}$ is Cohen-Macaulay.

\end{proof}
\medskip

\begin{proposition}\label{wocons2}
If $D_{F}$ and $D_{G}$ are Cohen-Macaulay graphs, $(y,z)\in E(D)$ and $x_{1},\ldots,x_{k}$ are in some minimal vertex cover of $G$, then $D_{H}$ is a Cohen-Macaulay graph.
\end{proposition}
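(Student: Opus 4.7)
The strategy is to run Case I of Proposition \ref{wocons1} in reverse: build the same short exact sequence used there and deduce Cohen--Macaulayness of $R/I(D_H)$ from Cohen--Macaulayness of the two flanking modules. Since $(y,z)\in E(D_H)$, setting $t=w(z)$ we have $yz^{t}\in I(D_H)$, and the only generators of $I(D_H)$ involving $z$ are $yz^{t}$ together with $x_{1}z^{t},\ldots,x_{k}z^{t}$; the remaining generators constitute $I(D_G)$. Hence $(I(D_H):z^{t})=\langle J,y\rangle$ and $\langle I(D_H),z^{t}\rangle=\langle I(D_G),z^{t}\rangle$, yielding the exact sequence
\[ 0\longrightarrow R/\langle J,y\rangle[-t]\overset{z^{t}}{\longrightarrow}R/I(D_H)\longrightarrow R/\langle I(D_G),z^{t}\rangle\longrightarrow 0. \]

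Next I would verify that the two outer modules are Cohen--Macaulay of the common dimension $n-g+1$, where $g=\mathrm{ht}\,(I(D_G))$. For the module on the right, $z$ is a fresh variable over $A/I(D_G)$, so $R/\langle I(D_G),z^{t}\rangle\cong (A/I(D_G))[y,z]/(z^{t})$, which is Cohen--Macaulay of dimension $\dim(A/I(D_G))+1=n-g+1$ since $z^{t}$ is a nonzerodivisor on $(A/I(D_G))[y,z]$. For the module on the left, writing $A'=K[x_{k+1},\ldots,x_{n}]$ we identify
\[ R/\langle J,y\rangle=R/\langle I(D_F),x_{1},\ldots,x_{k},y\rangle\cong (A'/I(D_F))[z], \]
which is Cohen--Macaulay by the hypothesis on $D_F$, of dimension $(n-k-g_F)+1$ with $g_F=\mathrm{ht}\,(I(D_F))$. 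Matching these two dimensions demands the combinatorial identity $g_F=g-k$.

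To establish $g_F=g-k$ I would invoke Lemma \ref{unm}: Cohen--Macaulayness of $D_G$ forces $I(G)$ to be unmixed, so every minimal vertex cover of $G$ has cardinality $g$. By hypothesis some such minimum cover $C$ contains $\{x_{1},\ldots,x_{k}\}$, so $C\setminus\{x_{1},\ldots,x_{k}\}$ covers $F$ with $g-k$ vertices; conversely any vertex cover of $F$ unioned with $\{x_{1},\ldots,x_{k}\}$ covers $G$, giving $\tau(F)\geq g-k$. Thus $\tau(F)=g-k$, and since the height of a weighted oriented edge ideal equals the vertex-cover number of its underlying graph, $g_F=g-k$. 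Applying the depth lemma to the short exact sequence then yields $\mathrm{depth}\,(R/I(D_H))\geq n-g+1$, while $\dim(R/I(D_H))$ is bounded above by the maximum of the two flanking dimensions, also $n-g+1$; equality of depth and dimension proves $D_H$ is Cohen--Macaulay.

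The main obstacle I expect is precisely the combinatorial identity $g_F=g-k$: both the unmixedness of $I(G)$ (supplied by Cohen--Macaulayness of $D_G$) and the placement of all of $x_{1},\ldots,x_{k}$ inside a single minimum cover are essential, since dropping either hypothesis would allow $\dim(R/\langle J,y\rangle)$ to exceed or fall short of $\dim(R/\langle I(D_G),z^{t}\rangle)$, breaking the dimension bookkeeping that makes the depth-lemma comparison produce equality.
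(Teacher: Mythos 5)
Your proposal is correct and follows essentially the same route as the paper: the same short exact sequence $0\to R/\langle J,y\rangle[-t]\to R/I(D_H)\to R/\langle I(D_G),z^{t}\rangle\to 0$, the same depth and dimension bookkeeping, and the same application of the depth lemma. The only difference is that you spell out the justification of $\mathrm{ht}\,(I(D_F))=g-k$ via unmixedness of $I(G)$ and the vertex-cover count, a step the paper asserts without proof.
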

\begin{proof}
Consider the exact sequence of Proposition \ref{wocons1},
{\fontsize{13}{14}\selectfont
$$ 0\longrightarrow R/\big<J,y\big>[-t]\overset{z^{t}}{\longrightarrow}R/I(D_{H})\overset{\psi}{\longrightarrow} R/\big<I(D_{G}),z^{t}\big>\longrightarrow 0.$$}
Since $I(D_{G})$ is Cohen-Macaulay, $$\mathrm{depth}\,(R/\big<I(D_{G}),z^{t}\big>)= 1+\mathrm{dim}\,(A/I(D_{G}))=1+n-g,$$
 where $g=\mathrm{height}\,(I(D_{G}))$. As $x_{1},\ldots,x_{k}$ are in some minimal vertex cover of $G$, $\mathrm{height}\,(I(D_{H}))=g+1,\,\, \mathrm{height}\,(I(D_{F}))=g-k $ and so $\mathrm{dim}\,(R/I(D_{H}))=1+n-g$. Now 
\begin{align*}
 \mathrm{depth}\,(R/\big<J,y\big>)&=1+\mathrm{depth}\,(K[x_{k+1},\ldots,x_{n}]/I(D_{F}))\\&=1+\mathrm{dim}\,(K[x_{k+1},\ldots,x_{n}]/I(D_{F}))\\&=1+(n-k)-(g-k)\\&=1+n-g.
 \end{align*}
Therefore by depth lemma we have $$\mathrm{depth}\,(R/I(D_{H}))\geq 1+n-g,$$ which imply $D_{H}$ is Cohen-Macaulay graph.
\end{proof}
\begin{corollary}\label{wocons3}
If $D_{G}$ is Cohen-Macaulay, $(y,z)\in E(D_{H})$ and $\lbrace x_{1},\ldots, x_{k}\rbrace$ is a minimal vertex cover of $G$, then $D_{H}$ is Cohen-Macaulay.
\end{corollary}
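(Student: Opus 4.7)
The plan is to invoke Proposition \ref{wocons2} directly, after verifying its three hypotheses in turn. Two of them are built into the corollary's statement: $D_G$ is Cohen-Macaulay by assumption, and $(y,z) \in E(D_H)$ is explicitly given. The third hypothesis of Proposition \ref{wocons2}, that $x_1, \ldots, x_k$ lie in some minimal vertex cover of $G$, holds trivially because the corollary assumes the stronger statement that $\{x_1, \ldots, x_k\}$ is itself a minimal vertex cover of $G$. Thus the only remaining item is the Cohen-Macaulayness of the weighted oriented subgraph $D_F = D_G \setminus \{x_1, \ldots, x_k\}$.

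For this I would use the elementary graph-theoretic observation that the complement of a vertex cover is an independent set. Since $\{x_1, \ldots, x_k\}$ covers every edge of $G$, no edge of $G$ has both endpoints in $\{x_{k+1}, \ldots, x_n\}$; in particular, the induced subgraph on this complementary set has no edges. Therefore $D_F$ has no edges, and consequently $I(D_F) = (0)$ in the polynomial ring $K[x_{k+1}, \ldots, x_n]$. The quotient $K[x_{k+1}, \ldots, x_n]/I(D_F)$ is then the polynomial ring itself, which is Cohen-Macaulay.

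With all three hypotheses of Proposition \ref{wocons2} verified, that proposition immediately yields that $D_H$ is Cohen-Macaulay. There is no real obstacle in this proof; the corollary is essentially a direct specialization of the preceding proposition, with the main content being the identification of $D_F$ as the empty graph on the complement of the minimal vertex cover.
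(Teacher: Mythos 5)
Your proposal is correct and matches the paper's own argument: the paper likewise notes that $I(D_F)=(0)$ (hence $D_F$ is Cohen-Macaulay) and then invokes Proposition \ref{wocons2}. You simply spell out the standard fact that the complement of a vertex cover is an independent set, which the paper leaves implicit.
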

\begin{proof}
Note that in this case $D_{F}$ is Cohen-Macaulay because $I(D_{F})=(0)$. Hence the proof follows from Proposition \ref{wocons2}.
\end{proof}
\medskip

\textbf{Special Case of First Construction}: We are taking same configuration as before with $w(z)=1$ but direction of edges between $x_{i}$ and $z$ may be anything for $i=\{1,\ldots k\}$. Without loss of generality we assume $(x_{1},z),\ldots,(x_{s},z)\in E(D_{H})$ and $(z,x_{s+1}),\ldots,(z,x_{k})\in E(D_{H})$.
\medskip

\begin{center}
\begin{tikzpicture}
  [scale=.6,auto=left,every node/.style={circle,fill=blue!20}]
  \node (n1) at (10,10) {$y$};
  \node (n2) at (10,5)  {$z$};
  \node (n4) at (5,0)  {$x_{1}$};
  \node (n3) at (10,0) {$x_{2}$};
   \node (n5) at (15,0) {$x_{k}$};
\node[fill=white] (n6) at (13,5) {$w(z)=1$};
  \foreach \from/\to in {n2/n4,n2/n3,n2/n5}
    \draw (\from) -- (\to);
    \draw (n1) -- (n2);
    
\end{tikzpicture}
\end{center}
\medskip

\begin{proposition}\label{wocons4}
If $D_{H}$ is a Cohen-Macaulay graph, then $D_{G}$ is Cohen-Macaulay graphs. Moreover, if for any vertex $v$ of $D_{F}$ adjacent to $x_{j}$ where $s+1\leq j\leq k$ we have $(v,x_{j})\in E(D_{G})$, then $D_{F}$ is also Cohen-Macaulay graphs.
\end{proposition}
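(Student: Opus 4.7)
The plan is to follow the template of Proposition \ref{wocons1}, adjusted for $w(z)=1$ and for the mixed orientations between $z$ and the $x_i$'s. For the first assertion, I would pick a homogeneous system of parameters $\{f_1,\ldots,f_d\}\subset A_+$ for $A/I(D_G)$, where $d=\dim(A/I(D_G))$, and then verify, exactly as in Proposition \ref{wocons1}, that it extends to a regular system of parameters for $R/I(D_H)$ by appending $y-z$ if $(y,z)\in E(D_H)$ (so $yz\in I(D_H)$), or $z-y^{w(y)}$ if $(z,y)\in E(D_H)$ (so $zy^{w(y)}\in I(D_H)$). The orientations of the $x_i$--$z$ edges play no role in this step. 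Cohen-Macaulayness of $D_H$ will then force the extended SOP to be a regular sequence, whence $f_1,\ldots,f_d$ is regular on $A/I(D_G)$ and $D_G$ is Cohen-Macaulay.

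For the second assertion, I would use the short exact sequence
$$0 \longrightarrow R/(I(D_H):z)[-1] \xrightarrow{\ z\ } R/I(D_H) \longrightarrow R/\langle I(D_H),z\rangle \longrightarrow 0.$$
A direct generator-by-generator calculation (using $w(z)=1$) gives
$$I(D_H):z \;=\; \bigl\langle I(D_G),\; x_1,\ldots,x_s,\; x_{s+1}^{w(x_{s+1})},\ldots,x_k^{w(x_k)},\; y^\alpha\bigr\rangle,$$
where $\alpha\in\{1,w(y)\}$ depending on the $y$--$z$ orientation, and $\langle I(D_H),z\rangle=\langle I(D_G),z\rangle$. The hypothesis --- that every edge of $D_G$ joining a vertex $v\in V(D_F)$ to some $x_j$ with $s+1\le j\le k$ is oriented $(v,x_j)$ --- will imply that each such edge contributes a generator $v\,x_j^{w(x_j)}\in (x_j^{w(x_j)})$; together with the linear generators $x_1,\ldots,x_s$ killing every $D_G$-edge at those vertices, this should yield a tensor decomposition
$$R/(I(D_H):z) \;\cong\; \frac{K[x_{k+1},\ldots,x_n]}{I(D_F)} \otimes_K \frac{K[x_{s+1},\ldots,x_k]}{(x_{s+1}^{w(x_{s+1})},\ldots,x_k^{w(x_k)})} \otimes_K \frac{K[y]}{(y^\alpha)} \otimes_K K[z].$$

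Next I would close the argument via the depth lemma: since $D_G$ is Cohen-Macaulay by the first assertion, $R/\langle I(D_G),z\rangle$ is Cohen-Macaulay of depth $n-g+1$; and Cohen-Macaulayness of $D_H$ gives $\operatorname{depth}(R/I(D_H))=n-g+1$. The depth lemma then forces $\operatorname{depth}(R/(I(D_H):z))\ge n-g+1$. Using $\operatorname{ht}(I(D_F))=g-k$ (valid because $\{x_1,\ldots,x_k\}$ lies in a minimal vertex cover of $G$ of size $g$, as in the setup preceding Proposition \ref{wocons1}), the tensor decomposition gives $\dim(R/(I(D_H):z))=n-g+1$. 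Hence $R/(I(D_H):z)$ is Cohen-Macaulay, and this Cohen-Macaulayness transfers to the tensor factor $K[x_{k+1},\ldots,x_n]/I(D_F)$, proving that $D_F$ is Cohen-Macaulay.

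The hardest step should be the tensor decomposition of $R/(I(D_H):z)$, which is precisely where the orientation hypothesis enters critically: without it, an edge $(x_j,v)\in E(D_G)$ with $j\ge s+1$ and $v\in V(D_F)$ would contribute a generator $x_j v^{w(v)}$ not absorbed by $x_j^{w(x_j)}$, producing nontrivial couplings between the $\{x_{s+1},\ldots,x_k\}$-block and the $D_F$-block that break the product structure and invalidate the depth/dimension comparison.
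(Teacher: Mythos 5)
Your proposal follows essentially the same route as the paper: the first claim is proved by extending a homogeneous s.o.p.\ of $A/I(D_G)$ by $y-z$ or $z-y^{w(y)}$, and the second by the short exact sequence $0\to R/(I(D_H):z)[-1]\to R/I(D_H)\to R/\langle I(D_G),z\rangle\to 0$, the identification of the colon ideal as $\langle I(D_F),x_1,\ldots,x_s,x_{s+1}^{w(x_{s+1})},\ldots,x_k^{w(x_k)},y^{\alpha}\rangle$ via the orientation hypothesis, and the depth lemma. The only cosmetic difference is that you bound $\dim(R/(I(D_H):z))$ through the explicit tensor decomposition and $\mathrm{ht}(I(D_F))=g-k$, whereas the paper reads the same bound off the identity $\dim(R/I(D_H))=\max\{\dim(R/J),\dim(R/\langle I(D_G),z\rangle)\}$; both are correct.
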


\begin{proof}
By same argument as in proof of Proposition \ref{wocons1}, we get $\lbrace f_{1},\ldots,f_{d}\rbrace$ as a part of system of parameters for $R/I(D_{H})$, where $f_{i}\in A_{+}$ for all $i$. Again following the proof of Proposition \ref{wocons1}, it is easy to see that $\lbrace f_{1},\ldots,f_{d},y-z\rbrace$ is a regular system of parameters for $R/I(D_{H})$ when $(y,z)\in E(D_{H})$ and $\lbrace f_{1},\ldots,f_{d},z-y^{r}\rbrace$ as a regular system of parameters for $R/I(D_{H})$ when $(z,y)\in E(D_{H})$. In either cases $\lbrace f_{1},\ldots,f_{d}\}$ is a regular sequence on $A/I(D_{G})$ and hence $D_{G}$ is Cohen-Macaulay.
\medskip

To show the Cohen-Macaulay property for $D_{F}$ we consider the exact sequence
{\fontsize{13}{14}\selectfont
$$ 0\longrightarrow R/(I(D_{H}):z)[-1]\overset{z}{\longrightarrow}R/I(D_{H})\overset{\psi}{\longrightarrow} R/\big<I(D_{H}),z\big>\longrightarrow 0$$}
which is equivalent to
$$ 0\longrightarrow R/J[-1]\overset{z}{\longrightarrow}R/I(D_{H})\overset{\psi}{\longrightarrow} R/\big<I(D_{G}),z\big>\longrightarrow 0$$
where
{\fontsize{13}{14}\selectfont
$$J= 
\begin{cases}
    \big<I(D_{G}),x_{1},\ldots,x_{s},x_{s+1}^{w(x_{s+1})},\ldots,x_{k}^{w(x_{k})},y\big>,& \mathrm{if } (y,z)\in E(D_{H})\\
    \big<I(D_{G}),x_{1},\ldots,x_{s},x_{s+1}^{w(x_{s+1})},\ldots,x_{k}^{w(x_{k})},y^{r}\big>,              & \mathrm{if} (z,y)\in E(D_{H})
\end{cases}.$$
}

  Since $D_{G}$ and $D_{H}$ are Cohen-Macaulay, $$\mathrm{depth}\,(R/I(D_{H}))=n-g+1=\mathrm{depth}\,(R/\big<I(D_{G}),z\big>),$$ where $g=\mathrm{height}\,(I(D_{G})).$ Using the depth lemma we have 
$$n-g+1\leq \mathrm{depth}\,(R/J).$$
Also we have 
$$\mathrm{dim}\,(R/I(D_{H}))=\mathrm{max}\,\lbrace \mathrm{dim}\,(R/J),\mathrm{dim}\,(R/\big<I(D_{G}),z\big>)\rbrace,$$
which imply $\mathrm{dim}\,(R/J)\leq n-g+1$. Therefore $R/J$ is Cohen-Macaulay. Now by the given condition on the vertices of $D_{F}$ we have 
{\fontsize{13}{14}\selectfont
$$J= 
\begin{cases}
    \big<I(D_{F}),x_{1},\ldots,x_{s},x_{s+1}^{w(x_{s+1})},\ldots,x_{k}^{w(x_{k})},y\big>,& \mathrm{if } (y,z)\in E(D_{H})\\
    \big<I(D_{F}),x_{1},\ldots,x_{s},x_{s+1}^{w(x_{s+1})},\ldots,x_{k}^{w(x_{k})},y^{r}\big>,              & \mathrm{if} (z,y)\in E(D_{H})
\end{cases}$$}
 and so $D_{F}$ is Cohen-Macaulay.
\end{proof}
\medskip

\begin{proposition}\label{wocons5}
If $D_{F}$ and $D_{G}$ are Cohen-Macaulay graphs, $x_{1},\ldots,x_{k}$ are in some minimal vertex cover of $G$ and for any vertex $v$ of $D_{F}$ adjacent to $x_{j}$ where $s+1\leq j\leq k$ we have $(v,x_{j})\in E(D_{G})$, then $D_{H}$ is a Cohen-Macaulay graph.
\end{proposition}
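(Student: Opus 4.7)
The plan is to run the same short exact sequence that appeared in Proposition \ref{wocons4} but now feed the depth lemma in the reverse direction, using Cohen-Macaulayness of $D_F$ and $D_G$ to force Cohen-Macaulayness of $D_H$. Concretely, I would start from
$$0 \longrightarrow R/J[-1] \xrightarrow{\ z\ } R/I(D_H) \xrightarrow{\ \psi\ } R/\langle I(D_G), z\rangle \longrightarrow 0,$$
with the same description of $J$ case-split on whether $(y,z)$ or $(z,y)$ lies in $E(D_H)$, and analyze both outer modules.

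First I would compute the depth of $R/\langle I(D_G),z\rangle$. Since $z$ does not appear in $I(D_G)$, this quotient is $(A/I(D_G))[y]$, so Cohen-Macaulayness of $D_G$ gives $\mathrm{depth}(R/\langle I(D_G),z\rangle) = \dim(A/I(D_G)) + 1 = n-g+1$, where $g = \mathrm{ht}(I(D_G))$. Next I would handle $R/J$. The extra hypothesis, that every neighbour $v$ of $x_j$ in $D_F$ (with $s+1\le j\le k$) satisfies $(v,x_j)\in E(D_G)$, is exactly what guarantees that all monomials $v\,x_j^{w(x_j)}\in I(D_G)$ involving those $x_j$ already lie in $\langle x_{s+1}^{w(x_{s+1})},\ldots,x_k^{w(x_k)}\rangle$. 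This collapses the ideal to
$$J = \langle I(D_F),\, x_1,\ldots,x_s,\, x_{s+1}^{w(x_{s+1})},\ldots,x_k^{w(x_k)},\, y \text{ or } y^r\rangle,$$
so, after killing the linear generators, $R/J$ becomes a tensor product of $K[x_{k+1},\ldots,x_n]/I(D_F)$, the Artinian complete intersection $K[x_{s+1},\ldots,x_k]/\langle x_{s+1}^{w(x_{s+1})},\ldots,x_k^{w(x_k)}\rangle$, and $K[z]$. Using that $D_F$ is Cohen-Macaulay and that $\mathrm{ht}(I(D_F))=g-k$ (a direct consequence of the minimal-cover hypothesis), the depth of this tensor product is $(n-k)-(g-k)+0+1 = n-g+1$.

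With both outer depths equal to $n-g+1$, the depth lemma applied to the short exact sequence yields $\mathrm{depth}(R/I(D_H))\ge n-g+1$. To close the argument I would show $\dim(R/I(D_H))=n-g+1$ by a direct vertex-cover count in $H$: any cover of $H$ either contains $z$, in which case it restricts to a cover of $G$ of size at least $g$; or it contains $y$, which forces $x_1,\ldots,x_k$ into the cover to handle the edges $\{z,x_i\}$ and then requires a cover of $F$ of size at least $g-k$, giving total $1+k+(g-k)=g+1$. A minimal cover of $G$ containing $\{x_1,\ldots,x_k\}$ together with $z$ attains this bound, so $\mathrm{ht}(I(D_H))=g+1$ and therefore $\dim(R/I(D_H))=n-g+1$. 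Combining $\mathrm{depth}\le\dim$ (always) with the inequality above forces equality, and hence $D_H$ is Cohen-Macaulay.

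The main obstacle I anticipate is verifying the depth formula for $R/J$: one must check carefully that the adjacency hypothesis really reduces $I(D_G)$ to $I(D_F)$ inside $J$, since otherwise cross-terms of the form $x_i\,x_j^{w(x_j)}$ (with $i>k$ and $s+1\le j\le k$) would survive in $J$ and obstruct the clean tensor-product splitting, making it no longer transparent that $\mathrm{depth}(R/J)$ matches $n-g+1$. Everything else (the exact sequence, the depth lemma, and the height computation) is routine once this structural reduction of $J$ is in place.
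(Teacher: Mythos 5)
Your proposal is correct and follows essentially the same route as the paper's proof: the same short exact sequence $0\to R/J[-1]\to R/I(D_H)\to R/\langle I(D_G),z\rangle\to 0$, the same use of the adjacency hypothesis to replace $I(D_G)$ by $I(D_F)$ inside $J$, the same depth computations for the two outer terms, and the depth lemma to conclude. Your explicit tensor-product justification of $\mathrm{depth}(R/J)$ and the vertex-cover count giving $\mathrm{ht}(I(D_H))=g+1$ are just slightly more detailed versions of steps the paper asserts directly.
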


\begin{proof}
Similar as in Proposition \ref{wocons4}, we have the exact sequence
$$ 0\longrightarrow R/J[-1]\overset{z}{\longrightarrow}R/I(D_{H})\overset{\psi}{\longrightarrow} R/\big<I(D_{G}),z\big>\longrightarrow 0$$
where
{\fontsize{13}{14}\selectfont
$$J= 
\begin{cases}
    \big<I(D_{G}),x_{1},\ldots,x_{s},x_{s+1}^{w(x_{s+1})},\ldots,x_{k}^{w(x_{k})},y\big>,& \mathrm{if } (y,z)\in E(D_{H})\\
    \big<I(D_{G}),x_{1},\ldots,x_{s},x_{s+1}^{w(x_{s+1})},\ldots,x_{k}^{w(x_{k})},y^{r}\big>,              & \mathrm{if} (z,y)\in E(D_{H})
\end{cases}.$$
}
By the given condition on the vertices of $D_{F}$ we have 
{\fontsize{13}{14}\selectfont
$$J= 
\begin{cases}
    \big<I(D_{F}),x_{1},\ldots,x_{s},x_{s+1}^{w(x_{s+1})},\ldots,x_{k}^{w(x_{k})},y\big>,& \mathrm{if } (y,z)\in E(D_{H})\\
    \big<I(D_{F}),x_{1},\ldots,x_{s},x_{s+1}^{w(x_{s+1})},\ldots,x_{k}^{w(x_{k})},y^{r}\big>,              & \mathrm{if} (z,y)\in E(D_{H})
\end{cases}.$$
}
Since $I(D_{G})$ is Cohen-Macaulay, $$\mathrm{depth}\,(R/\big<I(D_{G}),z\big>)= 1+n-g,$$
 where $g=\mathrm{height}\,(I(D_{G}))$. As $x_{1},\ldots,x_{k}$ are in some minimal vertex cover of $G$, $\mathrm{height}\,(I(D_{H})=g+1,\,\, \mathrm{height}\,(I(D_{F}))=g-k $ and so $\mathrm{dim}\,(R/I(D_{H}))=1+n-g$. Now
\begin{align*}
 \mathrm{depth}\,(R/J)&=1+\mathrm{depth}\,(K[x_{k+1},\ldots,x_{n}]/I(D_{F}))\\&=1+\mathrm{dim}\,(K[x_{k+1},\ldots,x_{n}]/I(D_{F}))\\&=1+(n-k)-(g-k)\\&=1+n-g.
 \end{align*}
Hence by depth lemma we have $\mathrm{depth}\,(R/I(D_{H}))\geq 1+n-g$, which imply $D_{H}$ is Cohen-Macaulay graph.
\end{proof}
\medskip

\begin{corollary}\label{wocons6}
If $D_{G}$ is Cohen-Macaulay, $\lbrace x_{1},\ldots, x_{k}\rbrace$ is a minimal vertex cover of $G$ and for any vertex $v$ of $D_{F}$ adjacent to $x_{j}$ where $s+1\leq j\leq k$ we have $(v,x_{j})\in E(D_{G})$, then $D_{H}$ is Cohen-Macaulay.
\end{corollary}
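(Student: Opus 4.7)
The plan is to reduce this corollary directly to Proposition \ref{wocons5} by observing that the hypothesis "$\{x_{1},\ldots,x_{k}\}$ is a minimal vertex cover of $G$" forces the Cohen-Macaulay hypothesis on $D_F$ to be automatic. Indeed, since every edge of $G$ must have at least one endpoint in any vertex cover, removing all of $x_{1},\ldots,x_{k}$ from $G$ leaves no edges behind, so the induced subgraph $D_F = D_G\setminus\{x_{1},\ldots,x_{k}\}$ has empty edge set and therefore $I(D_F) = (0)$. Consequently $A/I(D_F)$ is a polynomial ring, which is trivially Cohen-Macaulay.

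With that observation in hand, I would check that all three hypotheses of Proposition \ref{wocons5} are satisfied in our setting: (i) $D_G$ is Cohen-Macaulay, which is given; (ii) $D_F$ is Cohen-Macaulay, which is the observation above; (iii) $x_{1},\ldots,x_{k}$ are in some minimal vertex cover of $G$, which is exactly the given minimal vertex cover hypothesis; and (iv) the orientation condition on the edges from $D_F$ into $\{x_{s+1},\ldots,x_{k}\}$, which is assumed verbatim in the corollary. This is structurally the same reduction that was used in Corollary \ref{wocons3} to deduce the unoriented-leaf version from Proposition \ref{wocons2}, and it mirrors the pattern of how Villarreal's original construction produces Cohen-Macaulay whisker graphs from a vertex cover.

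There is no real obstacle here; the only thing worth stating carefully is the reason $I(D_F) = (0)$, namely that a minimal vertex cover $\{x_{1},\ldots,x_{k}\}$ covers every edge of $G$, so the induced subgraph on $V(G)\setminus\{x_{1},\ldots,x_{k}\}$ is edgeless. Once this is stated, the conclusion that $D_H$ is Cohen-Macaulay is a one-line invocation of Proposition \ref{wocons5}.
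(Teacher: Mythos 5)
Your argument is exactly the paper's: the paper also observes that $I(D_{F})=(0)$ because $\{x_{1},\ldots,x_{k}\}$ is a vertex cover, so $D_{F}$ is automatically Cohen--Macaulay, and then invokes Proposition \ref{wocons5}. Your version simply spells out the justification for $I(D_{F})=(0)$ in more detail, which is a harmless elaboration.
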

\begin{proof}
Note that in this case $D_{F}$ is Cohen-Macaulay because $I(D_{F})=(0)$. Hence the proof follows from Proposition \ref{wocons5}.
\end{proof}
\medskip

\textbf{Second construction:} For the second construction we change our notation. Let $D_{H}$ be a weighted oriented graph on the vertex set $V(D_{H})=\lbrace x_{1},\ldots, x_{n},z\rbrace$. Let $\lbrace x_{1},\ldots, x_{k}\rbrace$ be the vertices of $D_{H}$ adjacent to $z$ and $(x_{i},z)\in E(D_{H})$ for $i=1,\ldots,k$, as shown in the figure below. Remembering the first construction we may assume $\mathrm{deg}\,(x_{i})\geq 2$ for $i=1,\ldots, k$ and $\mathrm{deg}\,(z)\geq 2$. We set $D_{G}=D_{H}\setminus\lbrace z\rbrace$ and $D_{F}=D_{G}\setminus\lbrace x_{1},\ldots, x_{k}\rbrace$. Note that
$$\big<I(D_{G}), x_{1},\ldots, x_{k}\big>=\big<I(D_{F}),x_{1},\ldots, x_{k}\big>=J\,\,(\text{say}).$$

\begin{center}
\begin{tikzpicture}
  [scale=.6,auto=left,every node/.style={circle,fill=blue!20}]
  \node (n2) at (10,5)  {$z$};
  \node (n4) at (5,0)  {$x_{1}$};
  \node (n3) at (10,0) {$x_{2}$};
   \node (n5) at (15,0) {$x_{k}$};

  \foreach \from/\to in {n2/n4,n2/n3,n2/n5}
    \draw[<-] (\from) -- (\to);
\end{tikzpicture}
\end{center}
\medskip

\begin{proposition}\label{wocons7}
If $D_{H}$ is a Cohen-Macaulay graph, then $D_{F}$ is a Cohen-Macaulay graph.
\end{proposition}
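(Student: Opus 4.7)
The plan is to mimic Case I of the proof of Proposition \ref{wocons1}, working with the exact sequence obtained by multiplying by $z^{w(z)}$, but without the vertex $y$ that is absent in this construction. Set $R = A[z] = K[x_1,\ldots,x_n,z]$ and $t = w(z)$. Since the only edges of $D_H$ involving $z$ are the oriented edges $(x_i,z)$ for $i=1,\ldots,k$ (yielding $x_i z^t \in I(D_H)$), a routine colon computation shows $(I(D_H):z^t) = \langle I(D_G), x_1,\ldots,x_k\rangle = \langle I(D_F), x_1,\ldots,x_k\rangle = J$ and $\langle I(D_H), z^t\rangle = \langle I(D_G), z^t\rangle$, giving the short exact sequence
$$0 \longrightarrow R/J[-t] \xrightarrow{z^t} R/I(D_H) \longrightarrow R/\langle I(D_G), z^t\rangle \longrightarrow 0.$$

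Next, using unmixedness of $I(D_H)$ (which follows from Cohen-Macaulayness), I would verify that $\{x_1,\ldots,x_k\} \cup C_F$ is a minimal vertex cover of $H$ whenever $C_F$ is a minimum vertex cover of $F$ (removing any $x_i$ exposes the edge $(x_i,z)$; removing a vertex from $C_F$ exposes an edge of $F$). This forces $\mathrm{ht}(I(D_H)) = k + \mathrm{ht}(I(D_F))$, and hence $\dim(R/J) = (n-k+1) - \mathrm{ht}(I(D_F)) = \dim(R/I(D_H))$. Write $\delta$ for this common value.

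The crux is the lower bound $\mathrm{depth}_R\bigl(R/\langle I(D_G), z^t\rangle\bigr) \geq \delta - 1$. Since $R/\langle I(D_G), z^t\rangle$ is isomorphic over $A$ to a direct sum of $t$ copies of $A/I(D_G)$, this reduces to bounding $\mathrm{depth}_A(A/I(D_G))$ from below. The planned approach is to extend a homogeneous system of parameters $\{f_1,\ldots,f_d\}$ of $A/I(D_G)$ by the element $z$ to a system of parameters of $R/I(D_H)$; Cohen-Macaulayness of $R/I(D_H)$ then forces this to be a regular sequence, and rearranging so that $z$ comes first (which is licit precisely when no strong vertex cover of $H$ contains $z$) yields a regular sequence of length $\dim(A/I(D_G))$ on $A/I(D_G)$, so that $A/I(D_G)$ is Cohen-Macaulay. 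The main obstacle is precisely this rearrangement: an explicit inspection of the strong vertex covers of $H$ is required to verify that $z$ is not a zero-divisor on $R/I(D_H)$, or else to produce a substitute parameter when it is (compare the role played by $y-z^t$ and $z-y^r$ in Proposition \ref{wocons1}).

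Once the depth bound is secured, the depth lemma applied to the short exact sequence together with $\mathrm{depth}(R/I(D_H)) = \delta$ yields
$$\mathrm{depth}(R/J) \geq \min\bigl(\delta,\ \mathrm{depth}(R/\langle I(D_G),z^t\rangle) + 1\bigr) \geq \delta,$$
so $R/J$ is Cohen-Macaulay. Since $R/J \cong (K[x_{k+1},\ldots,x_n]/I(D_F))[z]$ and $z$ is a polynomial indeterminate, this is equivalent to $K[x_{k+1},\ldots,x_n]/I(D_F)$ being Cohen-Macaulay, i.e., $D_F$ is Cohen-Macaulay.
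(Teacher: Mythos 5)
Your outer skeleton coincides with the paper's: the same short exact sequence
$0 \to R/J[-t] \xrightarrow{\,z^{t}\,} R/I(D_{H}) \to R/\langle I(D_{G}),z^{t}\rangle \to 0$, the same height count via minimal vertex covers containing $\{x_{1},\ldots,x_{k}\}$, and the same appeal to the depth lemma. But the step you yourself label ``the main obstacle'' is precisely the content of the proof, and the mechanism you sketch for it cannot be repaired. In this construction $\deg(z)\geq 2$, so $z$ lies in some minimal vertex cover of $H$; minimal vertex covers are strong and give the minimal primes of $I(D_{H})$ (as $\sqrt{I(D_{H})}=I(H)$), so $z$ is \emph{always} a zero-divisor on $R/I(D_{H})$ and your parenthetical condition ``no strong vertex cover of $H$ contains $z$'' is never satisfied. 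Consequently no rearrangement of a regular sequence can begin with $z$; in fact, one can check that Cohen--Macaulayness of $D_{H}$ forces $\mathrm{ht}(I(D_{G}))=\mathrm{ht}(I(D_{H}))-1$, so $\dim(A/I(D_{G}))=\dim(R/I(D_{H}))$ and your set $\{f_{1},\ldots,f_{d},z\}$ has one element too many to be a system of parameters anyway. You are also aiming at more than is needed (or claimed): the proposition does not assert that $D_{G}$ is Cohen--Macaulay, and the depth lemma only requires $\mathrm{depth}(A/I(D_{G}))\geq \dim(R/I(D_{H}))-1=n-g-1$.

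The paper closes exactly this gap with the substitute parameter $f=z-x_{1}-\cdots-x_{k}$, and this is where unmixedness is genuinely used: since $I(D_{H})$ is unmixed, every strong vertex cover has the minimum cardinality $g+1$ and is therefore a minimal vertex cover, and no minimal vertex cover can contain the closed neighbourhood $\{z,x_{1},\ldots,x_{k}\}$ of $z$ (one could delete $z$ from it). Hence no associated prime $P_{C}=\langle C\rangle$ contains all the monomials of $f$, so $f$ is a nonzerodivisor on $R/I(D_{H})$. Extending $f$ to a maximal regular sequence $f,f_{1},\ldots,f_{m}$ with $f_{i}\in A_{+}$ and $m=n-g-1$, and using that $A/I(D_{G})$ is an $A$-module direct summand of $R/I(D_{H})$ (its $z$-degree-zero piece), one obtains $\mathrm{depth}(A/I(D_{G}))\geq n-g-1$, which is exactly the bound your depth-lemma computation needs. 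Without this argument (or an equivalent one) your proof is incomplete at its decisive step.
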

\begin{proof}
We set $R=K[x_{1},\ldots, x_{n}, z]$, $A=K[x_{1},\ldots, x_{n}]$ and $\mathrm{ht}\,(I(D_{H}))=g +1$. Since $D_{H}$ is unmixed, from Lemma \ref{unm} we have $L_{3}(C)=\phi$ for any strong vertex cover $C$ of $D_{H}$. So $\lbrace z,x_{1},\ldots,x_{k}\rbrace$ can not be in any strong vertex cover of $D_{H}$. Hence the polynomial $f=z-x_{1}-\cdots - x_{k}$ is regular on $R/I(D_{H})$ as it is clearly not contained in any associated prime of $I(D_{H})$. Therefore there is a sequence $\lbrace f,f_{1},\ldots, f_{m}\rbrace$ regular on $R/I(D_{H})$ so that $\lbrace f_{1},\ldots, f_{m}\rbrace\subset A_{+}$, where $m=n-g-1$. Observe that $\lbrace f_{1},\ldots, f_{m}\rbrace$ is in fact a regular sequence on
$A/I(D_{G})$, which gives $$\mathrm{depth}\,(A/I(D_{G}))=\mathrm{depth}\,(R/\big<I(D_{G}),z^{d}\big>)\geq n-g-1,$$ where $d=w(z)$. Next, we use the exact sequence
{\fontsize{13}{14}\selectfont
$$ 0\longrightarrow R/(I(D_{H}):z^{d})[-d]\overset{z^{d}}{\longrightarrow}R/I(D_{H})\overset{\psi}{\longrightarrow} R/\big<I(D_{H}),z^{d}\big>\longrightarrow 0$$}
which is equivalent to
{\fontsize{13}{14}\selectfont
$$ 0\longrightarrow R/J[-d]\overset{z^{d}}{\longrightarrow}R/I(D_{H})\overset{\psi}{\longrightarrow} R/\big<I(D_{G}),z^{d}\big>\longrightarrow 0.$$
}
Now $\mathrm{depth}\,(R/I(D_{H}))=\mathrm{dim}\,(R/I(D_{H}))=n-g$. By depth lemma we get $\mathrm{depth}\,(R/J)\geq n-g$. Since there is a minimal vertex cover of $D_{H}$ containing $\lbrace x_{1},\ldots,x_{k}\rbrace$, we have $\mathrm{height}\,(J)=g+1$. Therefore $R/J$ is Cohen-Macaulay and hence $D_{F}$ is Cohen-Macaulay.
\end{proof}
\medskip

\begin{proposition}\label{wocons8}
Assume that $\lbrace x_{1},\ldots, x_{k}\rbrace$ is not contained in any minimal vertex cover of $D_{G}$ and $$\mathrm{height}\,(\big<I(D_{G}), x_{1},\ldots,x_{k}\big>) =\mathrm{height}\,(I(D_{G}))+1. $$ If $D_{F}$ and $D_{G}$ are Cohen-Macaulay, then $D_{H}$ is Cohen-Macaulay.
\end{proposition}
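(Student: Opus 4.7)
The plan is to re-use the short exact sequence from the proof of Proposition \ref{wocons7}, namely
$$0\longrightarrow R/J[-d]\overset{z^{d}}{\longrightarrow}R/I(D_{H})\overset{\psi}{\longrightarrow} R/\big<I(D_{G}),z^{d}\big>\longrightarrow 0,$$
with $d=w(z)$, and to compute the depths of the two outer modules together with $\dim(R/I(D_{H}))$. Setting $g=\mathrm{height}\,(I(D_{G}))$, the depth lemma will then yield $\mathrm{depth}\,(R/I(D_{H}))\geq \min\{\mathrm{depth}\,(R/J),\mathrm{depth}\,(R/\big<I(D_{G}),z^{d}\big>)\}$, which I expect to equal $n-g=\dim(R/I(D_{H}))$, giving Cohen-Macaulayness.

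For the right-hand term, observe that $R/\big<I(D_{G}),z^{d}\big>\cong (A/I(D_{G}))[z]/(z^{d})$ is a free module of rank $d$ over $A/I(D_{G})$, so any regular sequence on $A/I(D_{G})$ remains regular on it; combined with the Cohen-Macaulayness of $D_{G}$, its depth as an $R$-module equals $n-g$. For the left-hand term, the key step is to establish $\mathrm{height}\,(I(D_{F}))=g+1-k$. Vertex covers of the underlying graph of $D_{F}$ correspond, via adjoining $\{x_{1},\ldots,x_{k}\}$, to vertex covers of $G$ that contain $\{x_{1},\ldots,x_{k}\}$; by the second hypothesis the minimum of the latter has cardinality $g+1$, producing the desired height formula. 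Using the Cohen-Macaulayness of $D_{F}$ then gives $\mathrm{depth}\,(K[x_{k+1},\ldots,x_{n}]/I(D_{F}))=(n-k)-(g+1-k)=n-g-1$, and adjoining the regular variable $z$ yields $\mathrm{depth}\,(R/J)=n-g$.

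Finally, to compute $\dim(R/I(D_{H}))$ I would split a minimum vertex cover of the underlying graph of $D_{H}$ according to whether it contains $z$: if yes, removing $z$ leaves a vertex cover of $G$, so the cover of $H$ has size at least $g+1$; if no, the cover must contain $\{x_{1},\ldots,x_{k}\}$, and by the first hypothesis (together with the height condition) it has size at least $g+1$. A cover of size exactly $g+1$ is realised by adjoining $z$ to any minimum vertex cover of $G$, hence $\mathrm{height}\,(I(D_{H}))=g+1$ and $\dim(R/I(D_{H}))=n-g$, matching the depth bound. I expect the main obstacle to be the combinatorial bookkeeping in the height computation of $I(D_{F})$, and ensuring that the first hypothesis ``$\{x_{1},\ldots,x_{k}\}$ is not contained in any minimal vertex cover of $D_{G}$'' is invoked correctly (in tandem with the height hypothesis) to rule out vertex covers of $H$ of size smaller than $g+1$.
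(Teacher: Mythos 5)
Your proposal is correct and follows essentially the same route as the paper: the same colon/quotient short exact sequence from Proposition \ref{wocons7}, the same depth computations for $R/\big<I(D_{G}),z^{d}\big>$ and $R/J$ (both equal to $n-g$ using the Cohen--Macaulayness of $D_{G}$ and $D_{F}$), and the depth lemma combined with $\mathrm{height}\,(I(D_{H}))=g+1$. You simply supply more of the bookkeeping (the identification $\mathrm{height}\,(I(D_{F}))=g+1-k$ and the vertex-cover case analysis for $\dim(R/I(D_{H}))$) than the paper, which states these facts without proof.
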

\begin{proof}
The assumption on $\lbrace x_{1},\ldots, x_{k}\rbrace$ forces $$\mathrm{height}\,(I(D_{H}))=\mathrm{height}\,(I(D_{G}))+1.$$ Consider the exact sequence
$$ 0\longrightarrow R/J[-d]\overset{z^{d}}{\longrightarrow}R/I(D_{H})\overset{\psi}{\longrightarrow} R/\big<I(D_{G}),z^{d}\big>\longrightarrow 0.$$
Now $\mathrm{depth}\,(R/\big < I(D_{G}),z^{d}\big >)=\mathrm{dim}\,(R/\big<I(D_{G}),z^{d}\big>)=n-g$, where $g=\mathrm{height}\,(I(D_{G}))$. Since $D_{F}$ is Cohen-Macaulay, $R/\big<I(D_{F}),x_{1},\ldots,x_{k}\big>=R/J$ is Cohen-Macaulay. So we have $$\mathrm{depth}\,(R/J)=\mathrm{dim}\,(R/J)=n-g.$$ Using depth lemma we get from the above exact sequence $$\mathrm{depth}\,(R/I(D_{H}))=n-g=\mathrm{dim}\,(R/I(D_{H})).$$ Hence $D_{H}$ is Cohen-Macaulay.
\end{proof}
\medskip

\begin{corollary}\label{wocons9}
If $D_{G}$ is Cohen-Macaulay and $\lbrace x_{1},\ldots, x_{k-1}\rbrace$ is a minimal vertex cover for $D_{G}$, then $D_{H}$ is Cohen-Macaulay.
\end{corollary}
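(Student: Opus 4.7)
The plan is to reduce this corollary directly to Proposition \ref{wocons8}, mirroring the pattern by which Corollary \ref{wocons6} was deduced from Proposition \ref{wocons5} in the first construction. To invoke Proposition \ref{wocons8}, I will need to verify three things: that $\{x_{1},\ldots,x_{k}\}$ is contained in no minimal vertex cover of $D_{G}$; that $\mathrm{height}(\langle I(D_{G}),x_{1},\ldots,x_{k}\rangle) = \mathrm{height}(I(D_{G})) + 1$; and that both $D_{F}$ and $D_{G}$ are Cohen-Macaulay.

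Two of these drop out immediately from the hypothesis that $\{x_{1},\ldots,x_{k-1}\}$ is itself a minimal vertex cover of $D_{G}$. First, any vertex cover containing $\{x_{1},\ldots,x_{k}\}$ properly contains this minimal cover, hence cannot be inclusion-minimal. Second, because $\{x_{1},\ldots,x_{k-1}\}$ already dominates every edge of $G$, deleting the larger set $\{x_{1},\ldots,x_{k}\}$ from $D_{G}$ leaves no edges at all; thus $I(D_{F}) = (0)$ and $D_{F}$ is trivially Cohen-Macaulay.

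For the height identity I will observe that $I(D_{G}) \subseteq \langle x_{1},\ldots,x_{k-1}\rangle$, so $\langle I(D_{G}),x_{1},\ldots,x_{k}\rangle = \langle x_{1},\ldots,x_{k}\rangle$ has height $k$. Cohen-Macaulayness of $D_{G}$ forces unmixedness of $I(D_{G})$; by Theorem \ref{prmdc} the associated primes of $I(D_{G})$ correspond to strong vertex covers, and since every inclusion-minimal vertex cover is strong, all such covers share a common cardinality equal to $\mathrm{height}(I(D_{G})) = k-1$. The required identity $k = (k-1) + 1$ then falls out, and Proposition \ref{wocons8} delivers the Cohen-Macaulayness of $D_{H}$. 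There is no serious obstacle here; the only delicate point is keeping straight the distinction between inclusion-minimal and minimum-cardinality vertex covers, a distinction that collapses exactly because unmixedness of $I(D_{G})$ is available.
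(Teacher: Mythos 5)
Your proposal is correct and follows exactly the route the paper intends: the paper's one-line proof ("$I(D_{F})=0$, hence the proof follows") is an implicit appeal to Proposition \ref{wocons8}, and you have simply supplied the verifications it leaves tacit. All three checks (that $\{x_{1},\ldots,x_{k}\}$ lies in no minimal vertex cover, that $\langle I(D_{G}),x_{1},\ldots,x_{k}\rangle=\langle x_{1},\ldots,x_{k}\rangle$ has height $k=\mathrm{height}(I(D_{G}))+1$ via unmixedness, and that $I(D_{F})=(0)$) are sound.
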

\begin{proof}
Note that $I(D_{F})=0$ in this case and hence the proof follows.
\end{proof}
\medskip

\textbf{Special Case of Second Construction}:
We are taking same configuration as in second construction with $w(z)=1$ but direction of edges between $x_{i}$ and $z$ may be anything for $i=\{1,\ldots k\}$. We may assume $(x_{1},z),\ldots,(x_{s},z)\in E(D_{H})$ and $(z,x_{s+1}),\ldots,(z,x_{k})\in E(D_{H})$.
\medskip

\begin{center}
\begin{tikzpicture}
  [scale=.6,auto=left,every node/.style={circle,fill=blue!20}]
  \node (n2) at (10,5)  {$z$};
  \node (n4) at (5,0)  {$x_{1}$};
  \node (n3) at (10,0) {$x_{2}$};
   \node (n5) at (15,0) {$x_{k}$};
\node[fill=white] (n6) at (13,5) {$w(z)=1$};

  \foreach \from/\to in {n2/n4,n2/n3,n2/n5}
    \draw[] (\from) -- (\to);
\end{tikzpicture}
\end{center}
\medskip

\begin{proposition}\label{wocons10}
If $D_{H}$ is a Cohen-Macaulay graph and for any vertex $v$ of $D_{F}$ adjacent to $x_{j}$ where $s+1\leq j\leq k$ we have $(v,x_{j})\in E(D_{G})$, then $D_{F}$ is a Cohen-Macaulay graph.
\end{proposition}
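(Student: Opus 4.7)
The plan is to adapt the argument of Proposition \ref{wocons7} to the setting $w(z)=1$, where the short exact sequence coming from multiplication by $z$ simplifies and must be coupled with the orientation hypothesis. Set $R=K[x_{1},\ldots,x_{n},z]$, $A=K[x_{1},\ldots,x_{n}]$ and let $g=\mathrm{ht}\,(I(D_{G}))$, so that $\mathrm{ht}\,(I(D_{H}))=g+1$ and $\dim(R/I(D_{H}))=n-g$. Since $w(z)=1$, a direct computation of $(I(D_{H}):z)$ picks up $x_{i}$ from each edge $(x_{i},z)$ with $1\le i\le s$ and $x_{j}^{w(x_{j})}$ from each edge $(z,x_{j})$ with $s+1\le j\le k$, giving
$$J:=(I(D_{H}):z)=\big<I(D_{G}),\,x_{1},\ldots,x_{s},\,x_{s+1}^{w(x_{s+1})},\ldots,x_{k}^{w(x_{k})}\big>.$$

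The next step is to rewrite $J$ with $I(D_{F})$ in place of $I(D_{G})$. Every generator of $I(D_{G})$ that involves some $x_{i}$ with $i\le s$ is absorbed into $\langle x_{1},\ldots,x_{s}\rangle$, and for each edge $\{v,x_{j}\}$ with $v\in V(D_{F})$ and $s+1\le j\le k$, the hypothesis $(v,x_{j})\in E(D_{G})$ makes the corresponding generator equal to $v\cdot x_{j}^{w(x_{j})}$, which already lies in $\langle x_{s+1}^{w(x_{s+1})},\ldots,x_{k}^{w(x_{k})}\rangle$. The surviving generators are precisely those of $I(D_{F})$, so
$$J=\big<I(D_{F}),\,x_{1},\ldots,x_{s},\,x_{s+1}^{w(x_{s+1})},\ldots,x_{k}^{w(x_{k})}\big>.$$
Plugging $J$ into the short exact sequence
$$0\longrightarrow R/J[-1]\overset{z}{\longrightarrow}R/I(D_{H})\overset{\psi}{\longrightarrow}R/\big<I(D_{G}),z\big>\longrightarrow 0,$$
and arguing exactly as in Proposition \ref{wocons7}, Lemma \ref{unm} together with the unmixedness of $D_{H}$ prevents $\{z,x_{1},\ldots,x_{k}\}$ from lying in any strong vertex cover, so $f=z-x_{1}-\cdots-x_{k}$ is regular on $R/I(D_{H})$ and extends to a regular sequence whose remaining terms lie in $A_{+}$. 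This supplies a regular sequence of length $n-g-1$ on $A/I(D_{G})\cong R/\big<I(D_{G}),z\big>$, and the depth lemma then forces $\mathrm{depth}(R/J)\ge n-g$; since $\mathrm{ht}(J)=g+1$ as in Proposition \ref{wocons7}, we have $\dim(R/J)=n-g$ and $R/J$ is Cohen-Macaulay.

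Finally, since $x_{1},\ldots,x_{s}$ and $z$ do not appear in $I(D_{F})$, we obtain
$$R/J\cong\bigl(K[x_{k+1},\ldots,x_{n}]/I(D_{F})\bigr)\otimes_{K}\bigl(K[x_{s+1},\ldots,x_{k}]/\big<x_{s+1}^{w(x_{s+1})},\ldots,x_{k}^{w(x_{k})}\big>\bigr),$$
and the second factor is Artinian; Cohen-Macaulayness of $R/J$ is therefore equivalent to that of $D_{F}$, completing the argument. I expect the main delicate point to be the rewriting $J=\langle I(D_{F}),\ldots\rangle$: one must verify that the orientation hypothesis is precisely what absorbs every generator of $I(D_{G})$ linking $V(D_{F})$ to $\{x_{s+1},\ldots,x_{k}\}$ into $\langle x_{s+1}^{w(x_{s+1})},\ldots,x_{k}^{w(x_{k})}\rangle$, because the reversed orientation $(x_{j},v)$ would yield $x_{j}\cdot v^{w(v)}$, which in general does not lie in this ideal.
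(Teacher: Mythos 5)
Your proposal is correct and follows essentially the same route as the paper: the colon-ideal computation $(I(D_{H}):z)=J$, the short exact sequence with the regular element $f=z-x_{1}-\cdots-x_{k}$ supplied by Lemma \ref{unm}, the depth lemma, and the rewriting of $J$ in terms of $I(D_{F})$ via the orientation hypothesis. You merely make two steps more explicit than the paper does (why the hypothesis $(v,x_{j})\in E(D_{G})$ absorbs the cross generators, and the final tensor-factor reduction from $R/J$ to $D_{F}$, where a harmless free factor $K[z]$ is omitted from your displayed isomorphism).
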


\begin{proof}
Following the same arguments and notations as in proof of Propostion \ref{wocons7} we get $f=z-x_{1}-\cdots - x_{k}$ is regular on $R/I(D_{H})$. Therefore there is a sequence $\lbrace f,f_{1},\ldots, f_{m}\rbrace$ regular on $R/I(D_{H})$ so that $\lbrace f_{1},\ldots, f_{m}\rbrace\subset A_{+}$, where $m=n-g-1$. Observe that $\lbrace f_{1},\ldots, f_{m}\rbrace$ is in fact a regular sequence on
$A/I(D_{G})$, which gives $$\mathrm{depth}\,(A/I(D_{G}))=\mathrm{depth}\,(R/\big<I(D_{G}),z\big>)\geq n-g-1.$$ Next, we use the exact sequence
{\fontsize{13}{14}\selectfont
$$ 0\longrightarrow R/(I(D_{H}):z)[-1]\overset{z}{\longrightarrow}R/I(D_{H})\overset{\psi}{\longrightarrow} R/\big<I(D_{H}),z\big>\longrightarrow 0$$
}
which is equivalent to
$$ 0\longrightarrow R/J[-1]\overset{z}{\longrightarrow}R/I(D_{H})\overset{\psi}{\longrightarrow} R/\big<I(D_{G}),z\big>\longrightarrow 0,$$
where $J=\big<I(D_{G}),x_{1},\ldots,x_{s},x_{s+1}^{w(x_{s+1})},\ldots,x_{k}^{w(x_{k})}\big>$.
\medskip

Now $\mathrm{depth}\,(R/I(D_{H}))=\mathrm{dim}\,(R/I(D_{H}))=n-g$. By depth lemma we have $\mathrm{depth}\,(R/J)\geq n-g$. Again by the given condition on the vertices of $D_{F}$ we have $$J=\big<I(D_{F}),x_{1},\ldots,x_{s},x_{s+1}^{w(x_{s+1})},\ldots,x_{k}^{w(x_{k})}\big>.$$ There is a minimal vertex cover of $D_{H}$ containing $\lbrace x_{1},\ldots,x_{k}\rbrace$ and so $\mathrm{height}\,(J)=g+1$. Therefore $R/J$ is Cohen-Macaulay and thus $D_{F}$ is Cohen-Macaulay.
\end{proof}
\medskip

\begin{proposition}\label{wocons11}
Assume that $\lbrace x_{1},\ldots, x_{k}\rbrace$ are not contained in any minimal vertex cover of $D_{G}$ and $$\mathrm{height}\,(J)=\mathrm{height}\,(I(D_{G}))+1,$$ where $J=\big<I(D_{G}),x_{1},\ldots,x_{s},x_{s+1}^{w(x_{s+1})},\ldots,x_{k}^{w(x_{k})}\big>$. Also assume  for any vertex $v$ of $D_{F}$ adjacent to $x_{j}$ where $s+1\leq j\leq k$ we have $(v,x_{j})\in E(D_{G})$. If $D_{F}$ and $D_{G}$ are Cohen-Macaulay, then $D_{H}$ is Cohen-Macaulay.
\end{proposition}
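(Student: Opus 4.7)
The plan is to mirror Proposition \ref{wocons8} in the $w(z)=1$ regime by exploiting the short exact sequence
$$0 \longrightarrow R/J[-1] \overset{z}{\longrightarrow} R/I(D_{H}) \overset{\psi}{\longrightarrow} R/\big<I(D_{G}), z\big> \longrightarrow 0$$
already set up in the proof of Proposition \ref{wocons10}, where $R = K[x_{1},\ldots,x_{n},z]$ and $J = \big<I(D_{G}), x_{1},\ldots,x_{s}, x_{s+1}^{w(x_{s+1})}, \ldots, x_{k}^{w(x_{k})}\big>$. The point is that, thanks to the Cohen-Macaulayness hypothesis on $D_{F}$ (rather than the Cohen-Macaulayness of $D_{H}$ as in Proposition \ref{wocons10}), both outer terms become rings of known depth, and the depth lemma forces the middle term to be Cohen-Macaulay.

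First I would invoke the adjacency hypothesis on $D_{F}$ to replace $I(D_{G})$ by $I(D_{F})$ inside $J$: every generator of $I(D_{G})$ involving some $x_{j}$ with $s+1 \leq j \leq k$ is, by hypothesis, of the form $v\, x_{j}^{w(x_{j})}$ and is therefore already absorbed into $\big<x_{j}^{w(x_{j})}\big> \subseteq J$; generators involving $x_{i}$ for $i \leq s$ are absorbed into $\big<x_{1},\ldots,x_{s}\big>$; the remaining generators are precisely those of $I(D_{F})$. This is the identical rewriting used in Proposition \ref{wocons10}. Writing $g = \mathrm{height}(I(D_{G}))$, the Cohen-Macaulayness of $D_{G}$ immediately yields $\mathrm{depth}(R/\big<I(D_{G}), z\big>) = \mathrm{depth}(A/I(D_{G})) = n-g$. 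For $\mathrm{depth}(R/J)$, the height assumption $\mathrm{height}(J) = g+1$ combined with the new description of $J$ forces $\mathrm{height}(I(D_{F})) = g+1-k$ in $K[x_{k+1},\ldots,x_{n}]$, so the Cohen-Macaulayness of $D_{F}$ gives $\mathrm{depth}(K[x_{k+1},\ldots,x_{n}]/I(D_{F})) = n-g-1$; tensoring over $K$ with $K[x_{s+1},\ldots,x_{k}, z]$ and killing the regular sequence $x_{s+1}^{w(x_{s+1})},\ldots,x_{k}^{w(x_{k})}$ then yields $\mathrm{depth}(R/J) = n-g$.

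Finally, the depth lemma applied to the displayed exact sequence produces $\mathrm{depth}(R/I(D_{H})) \geq n-g$, while the standard fact that in a short exact sequence $0 \to A \to B \to C \to 0$ one has $\dim B = \max(\dim A, \dim C)$, together with $\dim(R/J) = \dim(R/\big<I(D_{G}), z\big>) = n-g$, yields $\dim(R/I(D_{H})) = n-g$. Hence depth equals dimension and $D_{H}$ is Cohen-Macaulay. The most delicate step is the rewriting of $J$ via $I(D_{F})$: this is precisely where the adjacency hypothesis on $D_{F}$ enters, and without it the depth of $R/J$ cannot be directly read off from the Cohen-Macaulayness of $D_{F}$. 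Once $J$ is recast, the depth computation on $R/J$ and the final depth-lemma step are routine and parallel Proposition \ref{wocons8} essentially verbatim.
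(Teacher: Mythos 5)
Your proposal is correct and follows essentially the same route as the paper: the same short exact sequence $0 \to R/J[-1] \xrightarrow{z} R/I(D_{H}) \to R/\big<I(D_{G}),z\big> \to 0$, the same rewriting of $J$ in terms of $I(D_{F})$ via the adjacency hypothesis, and the same depth-lemma conclusion. The only difference is that you spell out the depth computation for $R/J$ (via the height bookkeeping $\mathrm{height}(I(D_{F}))=g+1-k$ and the Artinian tensor factors) where the paper simply asserts that $R/J$ is Cohen-Macaulay of depth $n-g$; this is a harmless elaboration, not a different argument.
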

\begin{proof}
The assumption on $\lbrace x_{1},\ldots, x_{k}\rbrace$ forces $$\mathrm{ht}\,(I(D_{H}))=\mathrm{ht}\,(I(D_{G}))+1.$$ Consider the exact sequence
$$ 0\longrightarrow R/J[-1]\overset{z}{\longrightarrow}R/I(D_{H})\overset{\psi}{\longrightarrow} R/\big<I(D_{G}),z\big>\longrightarrow 0.$$ 
By the given condition on the vertices of $D_{F}$ we also have $$J=\big<I(D_{F}),x_{1},\ldots,x_{s},x_{s+1}^{w(x_{s+1})},\ldots,x_{k}^{w(x_{k})}\big>.$$
Since $D_{G}$ is cohen-macaulay, $$\mathrm{depth}\,(R/(I(D_{G}),z)=\mathrm{dim}\,(R/(I(D_{G}),z)=n-g,$$ where $g=\mathrm{height}\,(I(D_{G}))$. Again $D_{F}$ is Cohen-Macaulay implies $R/J$ is Cohen-Macaulay. So we have $\mathrm{depth}\,(R/J)=n-g$. Using depth lemma we get from the above exact sequence $$\mathrm{depth}\,(R/I(D_{H}))=n-g=\mathrm{dim}\,(R/I(D_{H})).$$ Hence $D_{H}$ is Cohen-Macaulay.
\end{proof}
\medskip

\begin{corollary}\label{wocons12}
If $D_{G}$ is Cohen-Macaulay, $\lbrace x_{1},\ldots, x_{k-1}\rbrace$ is a minimal vertex cover for $D_{G}$ and for any vertex $v$ of $D_{F}$ adjacent to $x_{j}$ where $s+1\leq j\leq k$ we have $(v,x_{j})\in E(D_{G})$, then $D_{H}$ is Cohen-Macaulay.
\end{corollary}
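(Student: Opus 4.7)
The plan is to deduce this corollary immediately from Proposition \ref{wocons11} by verifying, one by one, that its hypotheses are fulfilled in this more restrictive setting. The argument is directly parallel to the deduction of Corollary \ref{wocons9} from Proposition \ref{wocons2}, so I would keep the proof very short.

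First I would observe that $D_F$ is Cohen-Macaulay. Because $\{x_1,\ldots,x_{k-1}\}$ is already a vertex cover of the underlying graph of $D_G$, every edge of $D_G$ has an endpoint in $\{x_1,\ldots,x_{k-1}\} \subseteq \{x_1,\ldots,x_k\}$, so deleting $\{x_1,\ldots,x_k\}$ to form $D_F$ leaves no edges. Therefore $I(D_F) = (0)$, which is trivially Cohen-Macaulay, and this also supplies the cleaner form $J = \langle x_1,\ldots,x_s, x_{s+1}^{w(x_{s+1})},\ldots,x_k^{w(x_k)}\rangle$.

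Next I would check the two remaining numerical hypotheses of Proposition \ref{wocons11}. The set $\{x_1,\ldots,x_k\}$ cannot lie in any minimal vertex cover of $D_G$, for such a cover would strictly contain the minimal cover $\{x_1,\ldots,x_{k-1}\}$, contradicting minimality. For the height equality, $J$ is generated by pure powers of $k$ distinct variables, so $\mathrm{height}(J) = k$; and since $D_G$ is Cohen-Macaulay (hence unmixed) and admits the minimal cover $\{x_1,\ldots,x_{k-1}\}$, every associated prime of $I(D_G)$ has height $k-1$, giving $\mathrm{height}(I(D_G)) = k-1$. Thus $\mathrm{height}(J) = k = \mathrm{height}(I(D_G)) + 1$, as required.

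With these conditions verified and the vertex condition on $D_F$ carried over verbatim from the hypothesis, Proposition \ref{wocons11} applies and concludes that $D_H$ is Cohen-Macaulay. I do not anticipate any real obstacle: every nontrivial content has already been absorbed into Proposition \ref{wocons11}, and the only thing to be careful about is that unmixedness of $D_G$ together with the presence of a minimal cover of size $k-1$ is what pins down $\mathrm{height}(I(D_G))$ exactly.
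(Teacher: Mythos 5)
Your proof is correct and takes essentially the same route as the paper's, which simply notes that $I(D_{F})=(0)$ and invokes Proposition \ref{wocons11}. Your explicit verifications of the remaining hypotheses (that $\lbrace x_{1},\ldots,x_{k}\rbrace$ lies in no minimal vertex cover and that $\mathrm{height}\,(J)=\mathrm{height}\,(I(D_{G}))+1$, using unmixedness to pin down $\mathrm{height}\,(I(D_{G}))=k-1$) are accurate and merely spell out what the paper leaves implicit.
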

\begin{proof}
Note that $I(D_{F})=0$ in this case and hence the proof follows by Proposition \ref{wocons11}.
\end{proof}
\medskip

\section{Cohen-Macaulay Edge Ideals of Weighted Oriented Cycles}
\medskip

\begin{proposition}{\rm (\cite{prt}, Proposition 54)}\label{cyc1}
If $G$ is a path, then the following conditions are equivalent:
\begin{enumerate}[(1)]
\item $R/I(D_{G})$ is Cohen-Macaulay.
\item $I(D_{G})$ is unmixed.
\item $\vert V(G)\vert = 2$ or $\vert V(G)\vert = 4$. In the second case, if $(x_{2}, x_{1})\in E(D_{G})$ or $(x_{3}, x_{4})\in E(D_{G})$,
then $w(x_{2}) = 1$ or $w(x_{3}) = 1$ respectively.
\end{enumerate}
\end{proposition}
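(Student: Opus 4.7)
The plan is to establish the chain (1) $\Rightarrow$ (2) $\Rightarrow$ (3) $\Rightarrow$ (1), with the bulk of the work absorbed into Theorem \ref{wocm}.

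For (1) $\Rightarrow$ (2), I would simply invoke the standard fact that a Cohen-Macaulay quotient of a polynomial ring is unmixed: every associated prime has height equal to the codimension, since $\mathrm{depth} = \dim$ forces all associated primes to be minimal of the same height.

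For (2) $\Rightarrow$ (3), the argument goes in two steps. First, by Lemma \ref{unm}, unmixedness of $I(D_G)$ forces $I(G)$ itself to be unmixed. The classification of paths with $I(P_n)$ unmixed is elementary: in $P_3 = x_1 - x_2 - x_3$ the minimal covers $\{x_2\}$ and $\{x_1, x_3\}$ have different sizes, and for $P_n$ with $n \ge 5$ one exhibits two minimal vertex covers of different cardinalities by alternating greedy selections from $x_1$ versus from $x_2$. This leaves only $|V(G)| = 2$ or $|V(G)| = 4$. In the latter case $P_4 = x_1 - x_2 - x_3 - x_4$ has the perfect matching $\{x_1, x_2\}, \{x_3, x_4\}$ with $x_1, x_4$ as leaf vertices, so Theorem \ref{wocm} applies and the equivalence (b) $\Leftrightarrow$ (c) there says unmixedness is equivalent to $w(x_2) = 1$ whenever $(x_2, x_1) \in E(D_G)$ and $w(x_3) = 1$ whenever $(x_3, x_4) \in E(D_G)$, which is precisely condition (3).

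For (3) $\Rightarrow$ (1), if $|V(G)| = 2$ then $I(D_G)$ is principal, hence a complete intersection and so Cohen-Macaulay. If $|V(G)| = 4$ then the perfect matching with the given leaf structure together with the weight condition in (3) is exactly condition (c) of Theorem \ref{wocm}, so the implication (c) $\Rightarrow$ (a) of that theorem delivers Cohen-Macaulayness of $A/I(D_G)$.

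The main obstacle is the elementary but slightly fiddly verification that $I(P_n)$ fails to be unmixed for $n \ge 5$; one has to produce, uniformly in $n$, two minimal covers of different cardinalities. Once this combinatorial step is in place, no direct enumeration of strong vertex covers of $D_{P_4}$ is required, since Theorem \ref{wocm} already packages exactly the weight condition that (3) asks for and supplies the Cohen-Macaulay conclusion on the other side.
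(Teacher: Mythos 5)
The paper offers no proof of this proposition at all: it is imported verbatim from \cite{prt}, Proposition 54, so there is nothing on the paper's side to compare against. Judged on its own, your architecture is sound and almost certainly parallels the source: (1) $\Rightarrow$ (2) is the standard unmixedness of Cohen--Macaulay graded quotients; for $\vert V(G)\vert=4$ the equivalence of (2), (3) and (1) is exactly Theorem \ref{wocm} applied to the perfect matching $\lbrace x_{2},x_{1}\rbrace,\lbrace x_{3},x_{4}\rbrace$ with leaves $x_{1},x_{4}$, whose condition (c) is literally the weight condition in (3); and the two-vertex case is a principal ideal, hence a complete intersection. The reduction of (2) to $I(G)$ being unmixed via Lemma \ref{unm} is also correct.

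The one place where your proof as written would fail is precisely the step you flagged. ``Alternating greedy selection from $x_{1}$ versus from $x_{2}$'' produces the covers $\lbrace x_{1},x_{3},x_{5},\ldots\rbrace$ and $\lbrace x_{2},x_{4},x_{6},\ldots\rbrace$, and for even $n\geq 6$ (already for $P_{6}$ and $P_{8}$) these have the same cardinality $n/2$, so they do not witness mixedness. The fact itself is true and the repair is easy: work with maximal independent sets (complements of minimal vertex covers), and compare $\lbrace x_{1},x_{3},x_{5},\ldots\rbrace$, of size $\lceil n/2\rceil$, with the maximal independent set $\lbrace x_{2},x_{5},x_{7},x_{9},\ldots\rbrace$ (every other vertex from $x_{7}$ onward), which one checks is maximal of size $\lceil n/2\rceil-1$ for every $n\geq 5$; their complements are minimal vertex covers of different heights. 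Alternatively, invoke the characterization of unmixed trees in \cite{vil}: an unmixed tree admits a perfect matching each of whose edges contains a leaf, and since $P_{n}$ has only the two leaves $x_{1},x_{n}$, such a matching covers at most four vertices, forcing $n\in\lbrace 2,4\rbrace$. With either repair the proof is complete.
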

\medskip

\begin{proposition}\label{cyc2}
Let $D_{G}$ be a Cohen-Macaulay weighted oriented graph whose underlying graph $G$ is a cycle. Then $G$ is either $C_{3}$ or $C_{5}$.
\end{proposition}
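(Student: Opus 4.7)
The plan is to reduce via unmixedness and then rule out the two remaining cases by direct analysis. Since $D_G$ is Cohen-Macaulay, $I(D_G)$ is unmixed, so Lemma \ref{unm} forces $I(G)=I(C_n)$ to be unmixed. The classical classification of unmixed cycles (see \cite{vi}) gives $n\in\{3,4,5,7\}$, so the task reduces to eliminating $n=4$ and $n=7$, the two values for which $I(C_n)$ is unmixed but not Cohen-Macaulay.

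For $n=4$, I work under the unmixed hypothesis. Applying the $L_3(C)=\emptyset$ requirement from Lemma \ref{unm} to each of the two minimum vertex covers $\{x_1,x_3\}$ and $\{x_2,x_4\}$, a short analysis of $L_1$ and $L_2$ (both of the latter are empty for these covers in $C_4$) forces every vertex of $C_4$ to have at least one outgoing edge. Counting edges gives $|E(C_4)|=4=|V(C_4)|$, so each vertex has outdegree exactly $1$; the orientation must therefore be a directed cycle. A direct colon-ideal computation, for instance $I(D_{C_4}):x_1x_2$, then exhibits a non-minimal strong vertex cover (with underlying prime $(x_2,x_3,x_4)$) having $L_3\ne\emptyset$ whenever some weight exceeds $1$, contradicting Lemma \ref{unm}. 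Hence all weights equal $1$ and $I(D_{C_4})=I(C_4)$, which is known not to be Cohen-Macaulay.

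For $n=7$, the same two-step strategy applies. The minimum vertex covers of $C_7$ all have size $4$; running the $L_3=\emptyset$ condition through them and using $|E(C_7)|=7=|V(C_7)|$ again forces outdegree exactly $1$ at each vertex, i.e.\ a directed-cycle orientation. A parallel colon-ideal analysis then rules out weights greater than $1$, by producing a non-minimal strong vertex cover with non-empty $L_3$. One concludes $I(D_{C_7})=I(C_7)$, which is also not Cohen-Macaulay. The main obstacle is this $n=7$ case: there are several families of minimum vertex covers of $C_7$ to enumerate, and identifying the correct colon ideal that exhibits the obstruction requires more bookkeeping than in the $n=4$ case, although the underlying principle is the same.
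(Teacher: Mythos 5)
There is a genuine gap, and it sits exactly where you anticipate the main work: the elimination of $n=4$ and $n=7$. Your claim that unmixedness of $I(D_{C_4})$ forces every vertex to have an outgoing edge (hence a directed-cycle orientation, hence eventually $I(D_{C_4})=I(C_4)$) is false, and it contradicts the paper's own Theorem \ref{cyc4}: for $n\in\{4,7\}$ the unmixed weighted orientations are precisely those in which every vertex of weight $>1$ is a \emph{sink}. Concretely, orient $C_4$ so that $x_2$ and $x_4$ are sinks and give them weights $2$ and $3$; then
$I(D_{C_4})=\big<x_1x_2^{2},x_3x_2^{2},x_3x_4^{3},x_1x_4^{3}\big>=\big<x_1,x_3\big>\cap\big<x_2^{2},x_4^{3}\big>$
is unmixed but is not $I(C_4)$, and it has two sinks. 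The source of the error is the $L_3(C)=\emptyset$ test applied to the minimal covers $\{x_1,x_3\}$ and $\{x_2,x_4\}$: in \cite{prt} the set $L_3(C)$ consists of the vertices of $C$ all of whose neighbours lie in $C$, so for a \emph{minimal} vertex cover $L_3(C)=\emptyset$ holds automatically and carries no information about the orientation. (The Preliminaries of this paper transpose $L_2$ and $L_3$ relative to \cite{prt}; reading Lemma \ref{unm} with the transposed definition is what produces your deduction, and that deduction is incompatible with Theorem \ref{cyc4}.) Consequently the reduction to trivial weights collapses, and with it the elimination of $n=4$ and $n=7$: you are still left with genuinely weighted unmixed orientations of $C_4$ and $C_7$ whose failure to be Cohen--Macaulay your argument never addresses.

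The repair is shorter than what you attempt and is the route the paper takes: by (\cite{prt}, Proposition 51), $I(D_G)$ Cohen--Macaulay already implies that $I(G)$ is Cohen--Macaulay --- not merely unmixed --- and a cycle has Cohen--Macaulay edge ideal if and only if it is $C_3$ or $C_5$ by (\cite{vil}, Corollary 7.3.19). By weakening ``Cohen--Macaulay'' to ``unmixed'' at the level of the underlying graph, you keep $n=4$ and $n=7$ alive and are then forced into the orientation analysis that fails. If you want to avoid citing Proposition 51 of \cite{prt}, the honest substitute is the theorem that for a monomial ideal $I$ the Cohen--Macaulayness of $A/I$ passes to $A/\sqrt{I}$, applied with $\sqrt{I(D_{C_n})}=I(C_n)$; some such statement is unavoidable, since unmixedness alone does not distinguish the weighted $C_4$ and $C_7$ from the weighted $C_3$ and $C_5$.
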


\begin{proof}
$I(D_{G})$ is Cohen-Macaulay implies $I(G)$ is Cohen-Macaulay by (\cite{prt}, Proposition 51). Since $G$ is a cycle, by (\cite{vil}, Corollary 7.3.19) we have $I(G)$ is Cohen-Macaulay if and only if $G$ is $C_{3}$ or $C_{5}$. Hence the result follows.
\end{proof}
\medskip

\begin{theorem}\label{cyc3}
$I(D_{C_{3}})$ is Cohen-Macaulay if and only if there exists $x\in V(D_{C_{3}})$ such that $w(x)=1$.
\end{theorem}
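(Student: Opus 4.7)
The plan hinges on a dimension count: $\mathrm{ht}(I(D_{C_{3}})) = 2$, so $\dim(A/I(D_{C_{3}})) = 1$, and a one-dimensional graded quotient is Cohen-Macaulay if and only if it has positive depth, i.e., if and only if the maximal homogeneous ideal $\mathfrak{m} = \langle x_{1}, x_{2}, x_{3} \rangle$ is not an associated prime. (The three height-two minimal primes come from the three minimal vertex covers of $C_{3}$.) Thus the theorem reduces to showing that $\mathfrak{m} \in \mathrm{Ass}(A/I(D_{C_{3}}))$ if and only if $w(x_{i}) \geq 2$ for every $i$.

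For the forward direction, I would argue by contrapositive: assume $w(x_{i}) \geq 2$ for all $i$. Since sources carry weight $1$ by convention, $D_{C_{3}}$ has no source, forcing its orientation to be a directed cycle, say $(x_{1}, x_{2}), (x_{2}, x_{3}), (x_{3}, x_{1})$, so $I(D_{C_{3}}) = \langle x_{1} x_{2}^{w(2)}, x_{2} x_{3}^{w(3)}, x_{3} x_{1}^{w(1)} \rangle$. I then exhibit the monomial $f := x_{1}^{w(1)-1} x_{2}^{w(2)-1} x_{3}^{w(3)-1}$: a direct divisibility check shows $f \notin I(D_{C_{3}})$ (no generator divides $f$), while $f x_{1}$ is divisible by $x_{3} x_{1}^{w(1)}$ (using $w(3) \geq 2$), $f x_{2}$ by $x_{1} x_{2}^{w(2)}$ (using $w(1) \geq 2$), and $f x_{3}$ by $x_{2} x_{3}^{w(3)}$ (using $w(2) \geq 2$). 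Hence $\mathfrak{m} \subseteq (I(D_{C_{3}}) : f) \neq A$, so $\mathfrak{m}$ is an embedded associated prime.

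For the converse, I would suppose $w(x) = 1$ for some vertex and use Theorem \ref{prmdc} to write $I(D_{C_{3}}) = \bigcap_{C} I_{C}$ over strong vertex covers. The full vertex set $V(C_{3})$ yields the component $I_{V(C_{3})} = \langle x_{1}^{w(1)}, x_{2}^{w(2)}, x_{3}^{w(3)} \rangle$, the only component whose associated prime is $\mathfrak{m}$. I plan to show this component is redundant, i.e., contained in the irreducible component $I_{C'}$ of some minimal vertex cover $C'$, so that $\mathfrak{m}$ drops out of $\mathrm{Ass}$. If $D_{C_{3}}$ has a source $x_{s}$, then $w(s) = 1$, and for $C' = V(C_{3}) \setminus \{x_{s}\}$ one has $L_{1}(C') = \emptyset$ (since $x_{s}$ has no in-neighbors), giving $I_{C'} = \langle x_{k}^{w(k)}, x_{k'}^{w(k')} \rangle \subseteq \langle x_{s}, x_{k}^{w(k)}, x_{k'}^{w(k')} \rangle = I_{V(C_{3})}$, where $\{x_{k}, x_{k'}\} = C'$. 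If $D_{C_{3}}$ is a directed cycle with $w(x_{i}) = 1$, let $x_{l}$ be the out-neighbor of $x_{i}$ and set $C' = V(C_{3}) \setminus \{x_{l}\}$; then $x_{i} \in L_{1}(C')$ via the edge $(x_{i}, x_{l})$, while the third vertex $x_{m}$ lies in $L_{3}(C')$, so $I_{C'} = \langle x_{i}, x_{m}^{w(m)} \rangle \subseteq I_{V(C_{3})}$ because $w(i) = 1$.

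The main obstacle will be the orientation-dependent bookkeeping in the converse direction: one must tailor the choice of minimal vertex cover $C'$ so that the only vertex ending up in $L_{1}(C')$ has weight $1$ (either the source of $D_{C_{3}}$, or the prescribed weight-$1$ vertex in the directed-cycle case), which requires a careful case-by-case verification of the sets $L_{1}, L_{2}, L_{3}$ for each candidate cover.
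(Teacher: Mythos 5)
Your proof is correct, but it follows a genuinely different route from the paper. The paper handles the forward direction by passing to unmixedness and quoting the classification of unmixed $I(D_{C_n})$ from Pitones--Reyes--Toledo (its Theorem 49), and handles the converse by invoking the machinery of Section 3: it views the weight-one vertex $x$ as the vertex $z$ in the special case of the second construction, notes that $D_{C_3}\setminus\{x\}$ is a two-vertex path (Cohen--Macaulay by Proposition \ref{cyc1}), and concludes via Corollary \ref{wocons12}. You instead exploit that $\dim(A/I(D_{C_3}))=1$, so Cohen--Macaulayness is equivalent to $\mathfrak{m}\notin\mathrm{Ass}(A/I)$: for the forward direction you produce the explicit witness $f=x_1^{w(1)-1}x_2^{w(2)-1}x_3^{w(3)-1}$ with $(I:f)=\mathfrak{m}$ when every weight is at least $2$ (the divisibility checks are all correct, and the no-source-implies-directed-cycle step is legitimate given the paper's convention that sources have weight $1$), and for the converse you show the $\mathfrak{m}$-primary component is redundant. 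This is more elementary and self-contained: it needs only Theorem \ref{prmdc}, not the constructions of Section 3 nor the full unmixedness classification; what it loses is that it does not showcase the paper's constructions, which is the point of Section 4. Two small repairs: first, redundancy of $I_{V(C_3)}$ means it \emph{contains} the intersection of the other components, so the containment you actually prove, $I_{C'}\subseteq I_{V(C_3)}$, is the right one, but your phrase ``contained in the irreducible component $I_{C'}$'' states it backwards; second, to conclude that $\mathfrak{m}$ ``drops out of $\mathrm{Ass}$'' you should either invoke the irredundancy clause of Theorem \ref{prmdc} (if $V(C_3)$ were a strong cover, its component would appear in an irredundant decomposition yet be swallowed by $I_{C'}$, a contradiction) or, more directly, verify that $V(C_3)$ fails the strong vertex cover condition when some weight equals $1$. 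Note also that only the partition of $C'$ into $L_1(C')$ versus $L_2(C')\cup L_3(C')$ affects $I_{C'}$, so your bookkeeping of which vertices land in $L_2$ versus $L_3$ is immaterial to the computation.
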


\begin{proof}
$"\Longrightarrow"$ $I(D_{C_{3}})$ is Cohen-Macaulay implies $I(D_{C_{3}})$ is unmixed. Then by (\cite{prt}, Theorem 49) there exists $x\in V(D_{C_{3}})$ such that $w(x)=1$.
\medskip

$"\Longleftarrow"$ Compare $D_{C_{3}}$ with the special case of second construction and consider $z=x$. Now $D_{C_{3}}\setminus \{x\}$ is a weighted oriented path of two vertices and so it is Cohen-Macaulay by Proposition \ref{cyc1}. Hence from Corollary \ref{wocons12} we have $I(D_{C_{3}})$ is Cohen-Macaulay.
\end{proof}
\medskip

\begin{center}
\begin{tikzpicture}
  [scale=.4,auto=left,every node/.style={circle,scale=0.5}]
 
  \node[draw,fill=blue!20] (n1) at (-2.5,8)  {$x_{1}$};
  \node[draw,fill=blue!20] (n2) at (2.5,8)  {$x_{2}$};
  \node[draw,fill=blue!20] (n3) at (2.5,3.5) {$x_{3}$};
   \node[draw,fill=blue!20] (n4) at (0,0) {$x_{4}$};
   \node[draw,fill=blue!20] (n5) at (-2.5,3.5) {$x_{5}$};
 \node (n6) at (-4,2.5){$w(x_{5})\neq 1$};
 \node (n7) at (4,2.5){$w(x_{3})\neq 1$};
 \node (n8) at (-2.5,9){$1$};
 \node (n9) at (2.5,9){$1$};
 \node (n10) at (0,-1){$1$};
\node[scale=2] (n11) at (0,5.5){$D_{1}$};
  \foreach \from/\to in {n4/n3,n4/n5,n3/n2,n5/n1}
    \draw[->] (\from) -- (\to);
    \draw (n1) -- (n2);
    
     \node[draw,fill=blue!20] (m1) at (8.5,8)  {$x_{1}$};
  \node[draw,fill=blue!20] (m2) at (13.5,8)  {$x_{2}$};
  \node[draw,fill=blue!20] (m3) at (13.5,3.5) {$x_{3}$};
   \node[draw,fill=blue!20] (m4) at (11,0) {$x_{4}$};
   \node[draw,fill=blue!20] (m5) at (8.5,3.5) {$x_{5}$};
 \node (m6) at (7,2.5){$w(x_{5})\neq 1$};
 \node (m7) at (11,-1.5){$w(x_{4})\neq 1$};
 \node (m8) at (8.5,9){$1$};
 \node (m9) at (13.5,9){$1$};
 \node (m10) at (14.5,3.5){$1$};
 \node[scale=2] (m11) at (11,5.5){$D_{2}$};

  \foreach \from/\to in {m4/m3,m5/m4,m1/m5}
    \draw[->] (\from) -- (\to);
    \draw (m1) -- (m2);
     \draw (m2) -- (m3);
     \node[draw,fill=blue!20] (o1) at (17.5,8)  {$x_{1}$};
  \node[draw,fill=blue!20] (o2) at (22.5,8)  {$x_{2}$};
  \node[draw,fill=blue!20] (o3) at (22.5,3.5) {$x_{3}$};
   \node[draw,fill=blue!20] (o4) at (20,0) {$x_{4}$};
   \node[draw,fill=blue!20] (o5) at (17.5,3.5) {$x_{5}$};
 \node (o6) at (16,2.5){$w(x_{5})\neq 1$};
 \node (o7) at (20,-1.5){$w(x_{4})\neq 1$};
  \node (o8) at (24,2.5){$w(x_{3})\neq 1$};
  \node (o9) at (17.5,9){$1$};
 \node (o10) at (22.5,9){$1$};
\node[scale=2] (o11) at (20,5.5){$D_{3}$};
  \foreach \from/\to in {o4/o3,o5/o4,o1/o5}
    \draw[->] (\from) -- (\to);
    \draw (o1) -- (o2);
     \draw (o2) -- (o3);
\end{tikzpicture}
\end{center}
\begin{center}
\begin{tikzpicture}
  [scale=.4,auto=left,every node/.style={circle,scale=0.5}]
 
  \node[draw,fill=blue!20] (n1) at (-2.5,8)  {$x_{1}$};
  \node[draw,fill=blue!20] (n2) at (2.5,8)  {$x_{2}$};
  \node[draw,fill=blue!20] (n3) at (2.5,3.5) {$x_{3}$};
   \node[draw,fill=blue!20] (n4) at (0,0) {$x_{4}$};
   \node[draw,fill=blue!20] (n5) at (-2.5,3.5) {$x_{5}$};
 \node (n6) at (-4,2.5){$w(x_{5})\neq 1$};
 \node (n7) at (4,2.5){$w(x_{3})\neq 1$};
 \node (n8) at (-2.5,9){$1$};
 \node (n9) at (2.5,9){$w(x_{2})\neq 1$};
 \node (n10) at (0,-1){$1$};
\node[scale=2] (n11) at (0,5.5){$D_{4}$};
  \foreach \from/\to in {n4/n3,n4/n5,n2/n3,n1/n2}
    \draw[->] (\from) -- (\to);
    \draw (n1) -- (n5);
    \end{tikzpicture}
\end{center}
\medskip

\begin{theorem}{\rm (\cite{prt}, Theorem 49)}\label{cyc4}
If $G \simeq C_{n}$, then $I(D_{G})$ is unmixed if and only if one of the following conditions hold:
\begin{enumerate}[(1)]
\item $n = 3$ and there is $x\in V(D_{G})$ such that $w(x)=1$.
\item  $n\in \{4, 7\}$ and the vertices of $V(D_{G})^{+}$ are sinks.
\item  $n = 5$, there is $(x, y)\in E(D_{G})$ with $w(x) = w(y)=1$ and $D_{G}\not\in \{D_{1}, D_{2}, D_{3}\}$.
\item $D_{G}\simeq D_{4}$.
\end{enumerate}
\end{theorem}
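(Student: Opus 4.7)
The main tool is Lemma \ref{unm}: $I(D_G)$ is unmixed iff $I(G)$ is unmixed and $L_3(C) = \emptyset$ for every strong vertex cover $C$ of $D_G$. Since $G = C_n$, the classical characterization of unmixed cycle edge ideals (see \cite{vil}) restricts $n$ to $\{3, 4, 5, 7\}$, matching the four cases stated. The remaining task is to translate the condition ``$L_3(C) = \emptyset$ for all strong covers'' into explicit constraints on orientations and weights, case by case.

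For $n = 3$, the minimal covers are the three $2$-element subsets. Working through the symmetry classes of orientations of $C_3$ and computing $L_3$ on each minimal cover together with any non-minimal strong covers (where the condition ``$w(y) \ne 1$'' in the strong-cover definition activates), one finds that having at least one vertex of weight $1$ is both necessary and sufficient for $L_3(C) = \emptyset$ to hold across all strong covers; this yields condition (1). For $n \in \{4, 7\}$, minimum vertex covers of $C_n$ are independent sets, and $L_3(C) = \emptyset$ across all of them reduces to the requirement that every vertex of weight $>1$ be a sink (both incident edges incoming); a further check of non-minimal strong covers introduces no new obstruction, yielding condition (2).

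The case $n = 5$ is the subtlest. Each of the five minimum covers (obtained by rotation) has size $3$ and contains exactly one edge of $C_5$. Forcing $L_3(C) = \emptyset$ for all five compels the existence of an edge $(x,y) \in E(D_G)$ with $w(x) = w(y) = 1$; without such an edge, a rotation locates a vertex in some $L_3$. Additionally, size-$4$ strong covers can arise under certain weight patterns and carry nonempty $L_3$. A direct inspection of the three explicit graphs $D_1, D_2, D_3$ shows each admits such a bad size-$4$ strong cover, and hence each fails to be unmixed. The exceptional graph $D_4$ violates the edge-weight-$1$ condition of case (3), yet its specific cyclic orientation prevents any size-$4$ cover from being strong with nonempty $L_3$. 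This produces conditions (3) and (4).

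The main obstacle is the $n = 5$ analysis: enumerating size-$4$ candidate strong vertex covers of $D_{C_5}$, applying the definitions of $L_1, L_2, L_3$ and the strong-cover condition in each configuration, and deciding whether a given weight/orientation pattern admits a bad strong cover. Identifying $D_4$ as the lone non-standard exception in case (4) and eliminating $D_1, D_2, D_3$ in case (3) requires the extremal combinatorial analysis that the smaller values of $n$ do not demand.
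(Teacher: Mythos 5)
First, a point of fact: the paper does not prove this statement at all. Theorem \ref{cyc4} is quoted verbatim from \cite{prt} (Theorem 49) and is used only as an input to Theorem \ref{cyc5}, so there is no proof in the paper to compare yours against. Your overall strategy --- reduce to Lemma \ref{unm}, use the classical fact that $I(C_{n})$ is unmixed only for $n\in\{3,4,5,7\}$, and then translate ``$L_{3}(C)=\emptyset$ for every strong vertex cover'' into constraints on weights and orientations case by case --- is the natural one and is essentially the route taken in \cite{prt} itself.

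As written, however, the proposal has concrete defects beyond being only a plan. The claim that for $n\in\{4,7\}$ the minimum vertex covers of $C_{n}$ are independent sets is false for $n=7$: the covering number of $C_{7}$ is $4$ while its independence number is $3$, so every minimal vertex cover of $C_{7}$ contains an edge, and the reduction to ``every vertex of $V(D_{G})^{+}$ is a sink'' needs a different justification there. More seriously, your assertion that forcing $L_{3}(C)=\emptyset$ over the five minimum covers of $C_{5}$ ``compels the existence of an edge $(x,y)$ with $w(x)=w(y)=1$'' is contradicted by $D_{4}$, which you later (correctly) list as unmixed even though it has no such edge: its only weight-one vertices are $x_{1}$ and $x_{4}$, which are not adjacent in $C_{5}$. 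So the minimal-cover analysis cannot force that edge, and the dichotomy between cases (3) and (4) must emerge from the analysis rather than be patched on afterwards. Finally, the substantive content of the theorem --- enumerating the non-minimal strong covers, verifying that each of $D_{1},D_{2},D_{3}$ admits a strong cover with $L_{3}\neq\emptyset$, and checking that $D_{4}$ does not --- is asserted (``one finds'', ``a direct inspection shows'') rather than carried out. The proposal is an outline of the right shape, not a proof.
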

\medskip

\begin{center}
\begin{tikzpicture}
  [scale=.4,auto=left,every node/.style={circle,scale=0.5}]
 
  \node[draw,fill=blue!20] (n1) at (-2.5,8)  {$x_{1}$};
  \node[draw,fill=blue!20] (n2) at (2.5,8)  {$x_{2}$};
  \node[draw,fill=blue!20] (n3) at (2.5,3.5) {$x_{3}$};
   \node[draw,fill=blue!20] (n4) at (0,0) {$x_{4}$};
   \node[draw,fill=blue!20] (n5) at (-2.5,3.5) {$x_{5}$};
 
 \node (n8) at (-2.5,9){$1$};
 \node (n9) at (2.5,9){$1$};
 \node (n10) at (0,-1){$1$};
\node[scale=2] (n11) at (0,5.5){$D_{5}$};
  \foreach \from/\to in {n4/n3,n4/n5,n2/n3,n5/n1}
    \draw[->] (\from) -- (\to);
    \draw (n1) -- (n2);
    
     \node[draw,fill=blue!20] (m1) at (7.5,8)  {$x_{1}$};
  \node[draw,fill=blue!20] (m2) at (12.5,8)  {$x_{2}$};
  \node[draw,fill=blue!20] (m3) at (12.5,3.5) {$x_{3}$};
   \node[draw,fill=blue!20] (m4) at (10,0) {$x_{4}$};
   \node[draw,fill=blue!20] (m5) at (7.5,3.5) {$x_{5}$};
 
 \node (m8) at (7.5,9){$1$};
 \node (m9) at (12.5,9){$1$};

 \node[scale=2] (m11) at (10,5.5){$D_{6}$};

  \foreach \from/\to in {m4/m3,m4/m5,m1/m5,m2/m3}
    \draw[->] (\from) -- (\to);
    \draw (m1) -- (m2);
    
     \node[draw,fill=blue!20] (o1) at (17.5,8)  {$x_{1}$};
  \node[draw,fill=blue!20] (o2) at (22.5,8)  {$x_{2}$};
  \node[draw,fill=blue!20] (o3) at (22.5,3.5) {$x_{3}$};
   \node[draw,fill=blue!20] (o4) at (20,0) {$x_{4}$};
   \node[draw,fill=blue!20] (o5) at (17.5,3.5) {$x_{5}$};
 
  \node (o9) at (17.5,9){$1$};
 \node (o10) at (22.5,9){$1$};
\node[scale=2] (o11) at (20,5.5){$D_{7}$};
  \foreach \from/\to in {o3/o4,o5/o4}
    \draw[->] (\from) -- (\to);
    \draw (o1) -- (o2);
     \draw (o2) -- (o3);
     \draw (o1) -- (o5);
\end{tikzpicture}
\end{center}
\begin{center}
\begin{tikzpicture}
  [scale=.4,auto=left,every node/.style={circle,scale=0.5}]
 
  \node[draw,fill=blue!20] (n1) at (-2.5,8)  {$x_{1}$};
  \node[draw,fill=blue!20] (n2) at (2.5,8)  {$x_{2}$};
  \node[draw,fill=blue!20] (n3) at (2.5,3.5) {$x_{3}$};
   \node[draw,fill=blue!20] (n4) at (0,0) {$x_{4}$};
   \node[draw,fill=blue!20] (n5) at (-2.5,3.5) {$x_{5}$};
 \node (n6) at (-3.5,3.5){$1$};
 \node (n7) at (3.5,3.5){$1$};
 \node (n8) at (-2.5,9){$1$};
 \node (n9) at (2.5,9){$1$};
 \node (n10) at (0,-1.5){$w(x_{4})\neq 1$};
\node[scale=2] (n11) at (0,5.5){$D_{8}$};
  \foreach \from/\to in {n4/n3,n4/n5,n3/n2,n5/n1}
    \draw[] (\from) -- (\to);
    \draw (n1) -- (n2);
    \end{tikzpicture}
\end{center}
\medskip

\begin{theorem}\label{cyc5}
If $H \simeq C_{5}$, then $I(D_{H})$ is Cohen-Macaulay if and only if $D_{H}\simeq D_{4}$ or there is $(x, y)\in E(D_{H})$ with $w(x) = w(y)=1$ and $D_{H}\not\in \{D_{1}, D_{2}, D_{3}\}$.
\end{theorem}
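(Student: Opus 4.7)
The forward implication is the easy half: since $I(D_H)$ being Cohen-Macaulay implies it is unmixed (\cite[Proposition~51]{prt}), applying Theorem~\ref{cyc4} with $n = 5$ immediately yields the stated dichotomy.

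For the converse, my plan is to realize each admissible $D_H$ as the graph $D_H$ appearing in the second construction of Section~3 by choosing a \textit{sink} vertex $z \in V(D_H)$, i.e., a vertex at which both incident cycle edges are oriented inward. Removing $z$ yields $D_G := D_H \setminus \{z\}$, a weighted oriented path on four vertices; further removing the two neighbors of $z$ yields $D_F$, a single edge, which is automatically Cohen-Macaulay by Proposition~\ref{cyc1}. Once $D_G$ is also shown to be Cohen-Macaulay via Proposition~\ref{cyc1}, the two remaining hypotheses of Proposition~\ref{wocons8}---namely that the two endpoints of a four-vertex path are not jointly contained in any minimal vertex cover (a standard fact about $P_{4}$) and that $\mathrm{ht}(\langle I(D_G), \mathcal{N}(z)\rangle) = \mathrm{ht}(I(D_G)) + 1$ (an immediate height computation)---are straightforward, and Proposition~\ref{wocons8} then yields that $D_H$ is Cohen-Macaulay.

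For $D_H \simeq D_4$, the natural choice is $z = x_3$, the unique sink with $w(x_3) \neq 1$. The resulting $D_G$ is the path $x_2 - x_1 - x_5 - x_4$ with orientations inherited from $D_4$; its only endpoint-directed edge is $(x_1, x_2)$, satisfying the required weight condition $w(x_1) = 1$ of Proposition~\ref{cyc1}, so $D_G$ is Cohen-Macaulay. For each $D_H$ satisfying condition~(3) of Theorem~\ref{cyc4}, one analogously identifies a sink $z$ (for instance $x_3$ in each of the illustrative configurations $D_5, D_6, D_7, D_8$) for which the resulting four-vertex path $D_G$ again satisfies the weight conditions of Proposition~\ref{cyc1}, yielding the Cohen-Macaulayness of $D_H$.

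The main obstacle is precisely the converse in case~(3): a finite case analysis over the orientations of the remaining three edges of $C_5$ is required, and one must verify that every admissible $D_H$ admits a sink $z$ for which $D_G$ is Cohen-Macaulay. The excluded graphs $D_1, D_2, D_3$ are exactly the configurations in which this approach fails---and, as they are already not unmixed by Theorem~\ref{cyc4}(3), their exclusion from the hypothesis is both necessary and natural.
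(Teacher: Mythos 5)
Your overall strategy is exactly the paper's: the forward direction via unmixedness and Theorem \ref{cyc4}, and the converse by deleting a sink-type vertex $z$ so that $D_G=D_H\setminus\{z\}$ is a weighted oriented $P_4$, $D_F$ is a single edge, and Proposition \ref{wocons8} applies (the two neighbours of $z$ become the endpoints of the $P_4$, which lie in no common minimal vertex cover, and the height of $\langle I(D_G),x_2,x_4\rangle$ is $\mathrm{ht}(I(D_G))+1$). Your treatment of the $D_4$ case is complete and matches the paper's Case 1.

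The gap is in the converse for condition (3), which you explicitly defer: the finite case analysis \emph{is} the content of that half of the theorem, and the one uniform prescription you offer --- take $z=x_3$ ``in each of $D_5,D_6,D_7,D_8$'' --- is wrong. In $D_7$ the edge $(x_3,x_4)$ is oriented out of $x_3$, so $x_3$ is not a sink and the second construction does not apply there; the paper takes $z=x_4$. In $D_8$ only $x_4$ has non-unit weight and the orientations are otherwise free, so no single vertex works: the paper splits into sub-cases, taking $z=x_3$ when $(x_4,x_3)\in E(D_H)$, $z=x_5$ when $(x_4,x_5)\in E(D_H)$, and $z=x_4$ when $(x_3,x_4),(x_5,x_4)\in E(D_H)$. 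Two further points need to be addressed rather than asserted ``analogously.'' First, a literal sink need not exist (e.g.\ a cyclically oriented $C_5$ with all weights $1$); one must either set aside the trivial case $I(D_H)=I(C_5)$ as the paper does, or observe that an edge between two weight-one vertices can be reoriented without changing $I(D_H)$ before invoking the second construction. Second, for each chosen $z$ one must actually verify the hypothesis of Proposition \ref{cyc1}(3) for the resulting four-vertex path (an interior vertex pointing to an endpoint must have weight one); this is precisely the verification that distinguishes $D_5,\dots,D_8$ from the excluded $D_1,D_2,D_3$, and it is the step your proposal leaves undone.
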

\begin{proof}
$"\Longrightarrow"$ $I(D_{H})$ is Cohen-Macaulay implies $I(D_{H})$ is unmixed. Therefore by Theorem \ref{cyc4} the "only if part" follows.
\medskip

$"\Longleftarrow"$ To prove the "if part" we will consider some cases and we will not consider the trivial case $D_{H}\simeq C_{5}$.
\medskip

\textbf{Case-1:} $D_{H}\simeq D_{4}$. Now compare $D_{H}$ with the second construction of previous section taking $z=x_{3}$. Then $D_{G}=D_{H}\setminus \{x_{3}\}$ is a weighted oriented path on four vertices and so $D_{G}$ is Cohen-Macaulay by Theorem \ref{cyc1}. Again $D_{F}=D_{G}\setminus \{x_{2},x_{4}\}$ being a weighted oriented path on two vertices is Cohen-Macaulay by Theorem \ref{cyc1}. Note that $\{x_{2},x_{4}\}$ can not be contained in any minimal vertex cover of $D_{G}$ and $$\mathrm{height}\,(\big<I(D_{G}), x_{2},x_{4}\big>)=3=\mathrm{height}\,(I(D_{G}))+1.$$ Hence by Proposition \ref{wocons8} $D_{H}$ is Cohen-Macaulay.
\medskip

If there is $(x, y)\in E(D_{H})$ with $w(x) = w(y)=1$ and $D_{H}\not\in \{D_{1}, D_{2}, D_{3}\}$, then $D_{H}$ is one of $\{D_{5},\,D_{6},\,D_{7},\,D_{8}\}$.
\medskip

\textbf{Case-2:} $D_{H}\simeq D_{5}$. Take $x_{3}=z$ and proceed similar like case-1. Then by Proposition \ref{wocons8} we get $D_{H}$ is Cohen-Macaulay.
\medskip

\textbf{Case-3:}  $D_{H}\simeq D_{6}$. Similarly in this case taking $x_{3}=z$ and using Proposition \ref{wocons8} we get $D_{H}$ is Cohen-Macaulay.
\medskip

\textbf{Case-4:}  $D_{H}\simeq D_{7}$. In this case consider $x_{4}=z$ and use Proposition \ref{wocons8} to we get $D_{H}$ is Cohen-Macaulay.
\medskip

\textbf{Case-5:}  $D_{H}\simeq D_{8}$. We consider some sub-cases and based on that we will choose $z$. For $(x_{4},x_{3})\in E(D_{H})$ take $x_{3}=z$, for $(x_{4},x_{5})\in E(D_{H})$ take $x_{5}=z$, for $(x_{3},x_{4}),(x_{5},x_{4})\in E(D_{H})$ take $x_{4}=z$ and use Proposition \ref{wocons8} to we get $D_{H}$ is Cohen-Macaulay.
\end{proof}
\medskip

Now we know the Conjecture \ref{woconj} mentioned in \cite{prt} is true for weighted oriented bipartite graph from (\cite{hlmrv}, Corollary 5.3), holds for some graphs with perfect matching by Theorem \ref{wocm} and (\cite{gmsvv}, Theorem 5) assures it is true for weighted oriented forest also. By Theorem \ref{cyc3}, Theorem \ref{cyc4}, Theorem \ref{cyc5} we can see that the Conjecture is true for weighted oriented cycles also.

\section{Cohen-Macaulay Alexander Dual of Weighted Oriented Edge Ideals}
\medskip

We are familiar to the concept of Alexander dual of square-free monomial ideals (see \cite{vil}, Definition 6.3.38). The notion of Alexander duality for arbitrary monomial ideals was introduced in \cite{mil}. In this section we will study Alexander dual of edge ideals of weighted oriented graphs. First we shall recall some definitions, notations and results from \cite{mil}:
\medskip

Let $A=K[x_{1},\ldots,x_{n}]$ be the polynomial ring over the field $K$. Now any monomial of $A$ can be determined by an unique vector $\mathbf{a}=(a_{1},\ldots,a_{n})\in \mathbb{N}^{n}$ and an irreducible monomial ideal is uniquely determined by a vector $\mathbf{b}=(b_{1},\ldots,b_{n})\in \mathbb{N}^{n}$. We will use the following notation to represent them:
$$\mathbf{x^{a}}=x_{1}^{a_{1}}\cdots x_{n}^{a_{n}}\hspace{0.5cm}\text{and}\hspace{0.5cm} \mathbf{m^{b}}=\big< x_{i}^{b_{i}}\mid b_{i}\geq 1\big>.$$
For a monomial ideal $I$ we will denote the exponent of the least common multiple of the minimal generators by the vector $\mathbf{a}_{I}$. For two vectors $\mathbf{a,b}\in \mathbb{N}^{n}$ we write $\mathbf{b}\preceq \mathbf{a}$ if each $b_{i}\leq a_{i}$. If $\mathbf{0}\preceq\mathbf{b}\preceq \mathbf{a}$, a vector $\mathbf{b^{a}}$ is defined as it's $i^{\text{th}}$ coordinate is $a_{i}+1-b_{i}$ for $b_{i}\geq 1$ and $0$ else. By (\cite{mil}, Corollary 1.3) we have $\mathbf{b}\preceq \mathbf{a}_{I}$ for any $\mathbf{m^{b}}\in \mathrm{Irr}(I)$, where $\mathrm{Irr}(I)$ denote the set of irredundant irreducible components of $I$ and so the following definition make sense.
\medskip

\begin{definition}[\cite{mil}]\label{alx}
Given a monomial ideal $I$ and $\mathbf{a}\succeq \mathbf{a}_{I}$, the \textbf{Alexander dual}
ideal $I^{\mathbf{a}}$ with respect to $\mathbf{a}$ is defined by
$$I^{\mathbf{a}}=\big< \mathbf{x}^{\mathbf{b}^{\mathbf{a}}}\mid \mathbf{m}^{\mathbf{b}}\in \mathrm{Irr} (I)\big>.$$
For the special case when $\mathbf{a}=\mathbf{a}_{I}$, we write $I^{\vee}=I^{\mathbf{a}_{I}}$.
\end{definition}
\medskip

\begin{proposition}[\cite{mil}, Corollary 2.14]\label{alx1}
$(I^{\mathbf{a}})^{\mathbf{a}}=I$, $(\mathbf{b}^{\mathbf{a}})^{\mathbf{a}}=\mathbf{b}$ and 
$$ I^{\mathbf{a}}=\bigcap\{\mathbf{m}^{\mathbf{b}^{\mathbf{a}}}\mid \mathbf{x}^{\mathbf{b}}\,\, \text{is a minimal generator of}\,\, I\}.$$
\end{proposition}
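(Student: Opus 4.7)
The plan is to prove the three assertions in the order (2), (3), (1), since (2) is a direct coordinate calculation, (3) follows from the definition together with an exchange between generators and irreducible components under the involution $\mathbf{b}\mapsto\mathbf{b^a}$, and (1) then drops out by applying (3) twice.

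First I would verify $(\mathbf{b^{a}})^{\mathbf{a}}=\mathbf{b}$ by unwinding the definition on each coordinate. Since $\mathbf{b}\preceq\mathbf{a}_{I}\preceq\mathbf{a}$, the inequality $b_{i}\leq a_{i}$ holds, so whenever $b_{i}\geq 1$ the rule gives $(\mathbf{b^{a}})_{i}=a_{i}+1-b_{i}\geq 1$, and a second application yields $a_{i}+1-(a_{i}+1-b_{i})=b_{i}$. When $b_{i}=0$, both iterations give $0$. This coordinatewise involution is the combinatorial backbone on which the other two parts rest.

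The main step is the intersection formula in (3). I would combine the defining formula $I^{\mathbf{a}}=\langle\mathbf{x^{b^{a}}}\mid \mathbf{m^{b}}\in\mathrm{Irr}(I)\rangle$ with a \textbf{swap lemma}: for vectors $\mathbf{b},\mathbf{c}\preceq\mathbf{a}$, the monomial $\mathbf{x^{c}}$ is divisible by $\mathbf{x^{b^{a}}}$ if and only if $\mathbf{x^{c^{a}}}$ lies in $\mathbf{m^{b}}$. This lemma is verified coordinatewise using part (2), and it converts a statement about divisibility of generators into a statement about membership in an irreducible component. With it in hand, I would show that $\mathbf{x^{c}}\in I^{\mathbf{a}}$ iff $\mathbf{x^{c^{a}}}$ is divisible by some minimal generator $\mathbf{x^{b}}$ of $I$, which by rewriting via the swap lemma is equivalent to $\mathbf{x^{c}}\in\mathbf{m^{b^{a}}}$ for every minimal generator $\mathbf{x^{b}}$ of $I$, yielding the claimed intersection. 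The standard reduction that $\mathbf{x^{c}}\in J$ iff $\mathbf{x^{c\wedge a}}\in J$ for ideals $J$ supported in $[\mathbf{0},\mathbf{a}]$ removes any need to restrict to $\mathbf{c}\preceq\mathbf{a}$.

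The step I expect to be most delicate is pinning down the swap lemma cleanly, because the involution behaves differently on coordinates with $b_{i}=0$ versus $b_{i}\geq 1$, and the membership condition for $\mathbf{m^{b}}$ is a disjunction over the indices $i$ with $b_{i}\geq 1$ while the divisibility condition is a conjunction. Careful bookkeeping, using the inequalities $b_{i}\leq a_{i}$ and the fact that $(\mathbf{b^{a}})_{i}=0$ precisely when $b_{i}=0$, is what forces the two sides to agree. Once (3) is established, part (1) is immediate: $I$ equals $\bigcap_{\mathbf{m^{b}}\in\mathrm{Irr}(I)}\mathbf{m^{b}}$, so applying (3) to $I^{\mathbf{a}}$ expresses $(I^{\mathbf{a}})^{\mathbf{a}}$ as the intersection of the ideals $\mathbf{m^{(b^{a})^{a}}}=\mathbf{m^{b}}$ as $\mathbf{m^{b}}$ ranges over $\mathrm{Irr}(I)$, which recovers $I$ by part (2).
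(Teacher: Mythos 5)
The paper offers no proof of this statement; it is quoted directly from Miller (\cite{mil}, Corollary~2.14), so there is nothing internal to compare against, and your attempt must be judged on its own. Parts (2) and the deduction of (1) from (3) are essentially fine, but your proof of (3) contains a genuine error: the swap lemma is false as stated. Take $n=1$, $\mathbf{a}=(2)$, $\mathbf{b}=(1)$, $\mathbf{c}=(1)$. Then $\mathbf{x}^{\mathbf{b}^{\mathbf{a}}}=x^{2}$ does not divide $\mathbf{x}^{\mathbf{c}}=x$, yet $\mathbf{x}^{\mathbf{c}^{\mathbf{a}}}=x^{2}$ does lie in $\mathbf{m}^{\mathbf{b}}=\langle x\rangle$. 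The same phenomenon breaks the membership equivalence you build on it: for $I=\langle x^{2}\rangle$ one has $I^{\mathbf{a}}=\langle x\rangle$, so with $\mathbf{c}=(2)$ the monomial $\mathbf{x}^{\mathbf{c}}=x^{2}$ lies in $I^{\mathbf{a}}$, while $\mathbf{x}^{\mathbf{c}^{\mathbf{a}}}=x$ is divisible by no generator of $I$. The correct coordinatewise duality carries a negation and uses the honest complement $\mathbf{a}-\mathbf{c}$ rather than $\mathbf{c}^{\mathbf{a}}$: for $\mathbf{0}\preceq\mathbf{c}\preceq\mathbf{a}$ one has $\mathbf{x}^{\mathbf{c}}\in\mathbf{m}^{\mathbf{b}}$ if and only if $\mathbf{x}^{\mathbf{b}^{\mathbf{a}}}$ does \emph{not} divide $\mathbf{x}^{\mathbf{a}-\mathbf{c}}$ (for $b_{i}\geq 1$, the inequality $c_{i}\geq b_{i}$ is equivalent to $a_{i}+1-b_{i}>a_{i}-c_{i}$, and coordinates with $b_{i}=0$ never witness either side).

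This missing negation is not cosmetic; it is the engine of the proposition. Your deduction passes from ``$\mathbf{x}^{\mathbf{c}^{\mathbf{a}}}$ is divisible by \emph{some} minimal generator'' to ``$\mathbf{x}^{\mathbf{c}}\in\mathbf{m}^{\mathbf{b}^{\mathbf{a}}}$ for \emph{every} minimal generator,'' and a termwise biconditional can never convert an existential quantifier over the generating set into a universal one. Only the negated equivalence, pushed through De Morgan --- $\mathbf{x}^{\mathbf{d}}\notin I$ iff no minimal generator divides it iff $\mathbf{x}^{\mathbf{a}-\mathbf{d}}$ lies in $\mathbf{m}^{\mathbf{b}^{\mathbf{a}}}$ for every minimal generator $\mathbf{x}^{\mathbf{b}}$ --- produces the intersection in (3). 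You correctly sensed the conjunction/disjunction tension in your final paragraph, but it cannot be repaired by bookkeeping: the complemented form of the lemma is required. Separately, to get (1) from (3) you also need that the monomials $\mathbf{x}^{\mathbf{b}^{\mathbf{a}}}$ for $\mathbf{m}^{\mathbf{b}}\in\mathrm{Irr}(I)$ are precisely the \emph{minimal} generators of $I^{\mathbf{a}}$; this is true because divisibility among the $\mathbf{x}^{\mathbf{b}^{\mathbf{a}}}$ mirrors inclusion among the $\mathbf{m}^{\mathbf{b}}$ and $\mathrm{Irr}(I)$ is irredundant, but it should be stated and checked.
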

\medskip

\begin{example}
\end{example}
\begin{center}
\begin{tikzpicture}
  [scale=.4,auto=left,every node/.style={circle,scale=0.5}]
 
  \node[draw,fill=blue!20] (n1) at (3,3)  {$x_{1}$};
  \node[draw,fill=blue!20] (n2) at (0,0)  {$x_{2}$};
  \node[draw,fill=blue!20] (n3) at (3,-3) {$x_{3}$};
   \node[draw,fill=blue!20] (n4) at (7,-3) {$x_{4}$};
   \node[draw,fill=blue!20] (n5) at (10,0) {$x_{5}$};
 \node[draw,fill=blue!20] (n6) at (7,3) {$x_{6}$};
 \node (m1) at (3,4){$2$};
 \node (m2) at (-1,0){$3$};
 \node (m3) at (2,-3){$2$};
 \node (m4) at (8,-3){$4$};
 \node (m5) at (11,0){$3$};
 \node (m6) at (7,4){$1$};
\node[scale=2] (n11) at (13,0){$D_{G}$};
 
  \foreach \from/\to in {n1/n2,n1/n6,n5/n1,n2/n4, n6/n3, n4/n6, n6/n5}
    \draw[->] (\from) -- (\to);
   
\end{tikzpicture}
\end{center}
For the above weighted oriented graph $D_{G}$ we have
{\normalsize
\begin{align*} 
I=I(D_{G})&=\big< x_{1}x_{2}^{3}, x_{5}x_{1}^{2}, x_{1}x_{6}, x_{2}x_{4}^{4}, x_{6}x_{3}^{2}, x_{4}x_{6}, x_{6}x_{5}^{3}\big>\\
&=\big<x_{2},x_{5},x_{6}\big>\cap \big<x_{1},x_{4}^{4},x_{6}\big>\cap \big<x_{1}^{2},x_{2},x_{6}\big>\cap\\
&\hspace{0.6cm} \big<x_{1},x_{3}^{2},x_{4},x_{5}^{3}\big>\cap \big<x_{2}^{3},x_{4}^{4},x_{5},x_{6}\big>\cap \big<x_{1}^{2},x_{2}^{3},x_{4}^{4},x_{6}\big>.
\end{align*}
}
Here $\mathbf{a}_{I}=(2,3,2,4,3,1)$ and so we get
{\normalsize
\begin{align*}
I^{\vee}&=\big< x_{2}^{3}x_{5}^{3}x_{6}, x_{1}^{2}x_{4}x_{6}, x_{1}x_{2}^{3}x_{6}, x_{1}^{2}x_{3}x_{4}^{4}x_{5}, x_{2}x_{4}x_{5}^{3}x_{6}, x_{1}x_{2}x_{4}x_{6}\big>\\
&=\big<x_{1}^{2},x_{2}\big>\cap \big<x_{5}^{3},x_{1}\big>\cap \big<x_{1}^{2},x_{6}\big>\cap \big<x_{2}^{3},x_{4}\big>\cap \\
&\hspace{0.6cm} \big<x_{6},x_{3}\big>\cap \big<x_{4}^{4},x_{6}\big>\cap\big<x_{6},x_{5}\big>.
\end{align*}
}
\medskip

\begin{theorem}[\cite{lg} and \cite{vil}]\label{alx2}
Let $G$ be a graph and let $\overline{G}$ be its complement. Then
$I(G)^{\vee}$ is Cohen-Macaulay if and only if $\overline{G}$ is a chordal graph.
\end{theorem}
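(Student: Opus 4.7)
The plan is to reduce this statement to two classical theorems in combinatorial commutative algebra: the Eagon--Reiner theorem and Fröberg's theorem on edge ideals with linear resolution. Both results fit naturally into the Stanley--Reisner framework, where $I(G)$ is the Stanley--Reisner ideal of the independence complex of $G$, and its Alexander dual $I(G)^{\vee}$ is the Stanley--Reisner ideal of the dual complex (whose facets are the complements of minimal vertex covers of $G$).

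First I would invoke the Eagon--Reiner theorem, which states that for a squarefree monomial ideal $J \subseteq A$, the quotient $A/J$ is Cohen--Macaulay if and only if the Alexander dual $J^{\vee}$ has a linear free resolution over $A$. Since Alexander duality is an involution on squarefree monomial ideals (a special case of Proposition \ref{alx1}), applying Eagon--Reiner to $J = I(G)^{\vee}$ yields the equivalence
\begin{equation*}
I(G)^{\vee} \text{ is Cohen--Macaulay} \;\Longleftrightarrow\; (I(G)^{\vee})^{\vee} = I(G) \text{ has a linear resolution.}
\end{equation*}
This step is a purely formal application and bears no combinatorial content.

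Second, I would apply Fröberg's theorem: the edge ideal $I(G)$ of a simple graph $G$ has a linear resolution if and only if the complement graph $\overline{G}$ is chordal. Chaining the two equivalences gives precisely the statement of the theorem. The main obstacle is Fröberg's theorem itself; the Eagon--Reiner reduction is straightforward, but the characterization of graphs with linear edge-ideal resolution requires real work. If one wanted a self-contained argument avoiding Fröberg, the natural route would be a shelling argument on the facets of the dual simplicial complex (equivalently, an induction on vertices that peels off a simplicial vertex of $\overline{G}$ using the perfect elimination ordering guaranteed by Fulkerson--Gross), matching the recursive structure of a chordal graph with that of a shellable complex, whence Cohen--Macaulayness follows by Reisner's criterion.
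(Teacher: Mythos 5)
The paper does not actually prove Theorem \ref{alx2}; it imports the statement from \cite{lg} and \cite{vil}, so there is no internal proof to compare against. Your derivation is correct and is the standard modern proof: the Eagon--Reiner theorem converts Cohen--Macaulayness of $A/I(G)^{\vee}$ into the statement that $(I(G)^{\vee})^{\vee}=I(G)$ has a linear resolution (the involution being the squarefree special case of Proposition \ref{alx1}), and Fr\"{o}berg's theorem on edge ideals with linear resolution then yields exactly the chordality of $\overline{G}$. It is worth noting that this is not how the cited source argues: Lyubeznik's 1988 proof predates Eagon--Reiner and proceeds by directly analyzing minimal non-Cohen--Macaulay squarefree monomial ideals, so your route is genuinely different from (and cleaner than) the original, at the cost of leaning on two substantial black boxes. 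One caution on your proposed ``self-contained'' alternative: shelling the dual complex by peeling off simplicial vertices along a perfect elimination ordering of $\overline{G}$ (which is, in spirit, the mechanism the paper itself uses later in Theorem \ref{alx4}) establishes only the ``if'' direction, since shellability implies Cohen--Macaulayness but not conversely; for the ``only if'' direction you would still need Eagon--Reiner together with the harder half of Fr\"{o}berg's theorem, or an argument in the style of \cite{lg}.
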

\medskip

Note that generalized Alexander dual for any monomial ideal in definition \ref{alx} matches with the Alexander dual of square-free monomial ideal defined in \cite{vil}.
\medskip

\begin{lemma}\label{alx3}
Let $D_{G}$ be a weighted oriented graph on the vertex set $V(D_{G})=\{x_{1}\ldots,x_{n}\}$. We construct a simple graph $G^{D}$ such that $V(G^{D})=\{x_{1,1}, \ldots, x_{n,1}\}\cup \{x_{k,2},\ldots,x_{k,w(k)}\mid x_{k}\,\text{ is not sink and}\,\, w(k)\neq 1\}$ and $$E(G^{D})=\bigcup_{(x_{i},x_{j})\in E(D_{G})}\{ \{x_{i,1},x_{j,1}\},\ldots, \{x_{i,w(i)},x_{j,1}\}\}.$$ Then $(I^{\vee}(\mathrm{pol}))^{\vee}=I(G^{D})$, where $I=I(D_{G})$.
\end{lemma}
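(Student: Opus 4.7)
\medskip

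\noindent\textit{Proof plan.} The plan is to prove the equivalent statement $I^\vee(\mathrm{pol}) = I(G^D)^\vee$; applying Alexander duality a second time via Proposition \ref{alx1} then gives $(I^\vee(\mathrm{pol}))^\vee = (I(G^D)^\vee)^\vee = I(G^D)$, since $I(G^D)$ is square-free.

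First I would compute $I^\vee$ explicitly. The minimal generators of $I = I(D_G)$ are $x_i x_j^{w(j)}$ for $(x_i,x_j)\in E(D_G)$, and the convention that sources have weight one forces $\mathbf{a}_I = (w(1),\ldots,w(n))$. A direct computation of $\mathbf{b}^{\mathbf{a}_I}$ attached to each such generator shows the corresponding irreducible ideal is $\langle x_i^{w(i)}, x_j\rangle$, so Proposition \ref{alx1} yields
\[
I^\vee \;=\; \bigcap_{(x_i,x_j)\in E(D_G)} \langle x_i^{w(i)}, x_j\rangle.
\]
A short check shows that the maximum power of $x_k$ appearing in the minimal generators of $I^\vee$ equals $w(k)$ when $x_k$ has an outgoing edge and equals $1$ when $x_k$ is a sink; hence the ambient polarization ring of $I^\vee$ coincides with $K[V(G^D)]$. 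Using that polarization commutes with intersection in this common ring (see below), polarizing each factor yields
\[
I^\vee(\mathrm{pol}) \;=\; \bigcap_{(x_i,x_j)\in E(D_G)} \langle x_{i,1} x_{i,2}\cdots x_{i,w(i)},\, x_{j,1}\rangle.
\]

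Independently I would compute $I(G^D)^\vee$ directly. Since $I(G^D)$ is square-free with minimal generators $x_{i,l} x_{j,1}$ for $(x_i,x_j)\in E(D_G)$ and $1\leq l\leq w(i)$, the standard Alexander-dual formula, combined with the identity $\bigcap_l\langle z_l, y\rangle = \langle y, \prod_l z_l\rangle$ for distinct variables, gives
\[
I(G^D)^\vee \;=\; \bigcap_{(x_i,x_j)\in E(D_G)} \bigcap_{l=1}^{w(i)} \langle x_{i,l}, x_{j,1}\rangle \;=\; \bigcap_{(x_i,x_j)\in E(D_G)} \langle x_{j,1},\, x_{i,1}\cdots x_{i,w(i)}\rangle,
\]
which matches the expression for $I^\vee(\mathrm{pol})$, and the conclusion follows.

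The key technical point is the commutation of polarization with intersection: for monomial ideals $I_1, I_2$ in $A$ polarized inside a common larger ring, $(I_1\cap I_2)^{\mathrm{pol}} = I_1^{\mathrm{pol}}\cap I_2^{\mathrm{pol}}$. This reduces to the identity $\mathrm{pol}(\mathrm{lcm}(m,n)) = \mathrm{lcm}(\mathrm{pol}(m),\mathrm{pol}(n))$, after recalling that an intersection of monomial ideals is generated by least common multiples of pairs of generators. The only remaining bookkeeping step is to confirm that the polarization target ring of $I^\vee$ really coincides with $K[V(G^D)]$, which is immediate from the max-exponent analysis described above.
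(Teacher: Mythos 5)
Your proposal is correct and follows essentially the same route as the paper: both compute $I^{\vee}=\bigcap_{(x_i,x_j)\in E(D_G)}\langle x_i^{w(i)},x_j\rangle$ from Proposition \ref{alx1}, polarize factorwise (the paper cites Faridi's Proposition 2.5 for the commutation with intersection that you re-derive via the lcm identity), and then dualize. The only cosmetic difference is that you verify $I^{\vee}(\mathrm{pol})=I(G^{D})^{\vee}$ and apply duality once more, whereas the paper reads the generators of $(I^{\vee}(\mathrm{pol}))^{\vee}$ directly off the irreducible decomposition.
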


\begin{proof}
Let $\mathcal{C}_{s}$ be the set of strong vertex covers of $D_{G}$. Then by  Theorem \ref{prmdc} we have $I=I(D_{G})=\bigcap_{C\in\mathcal{C}_{s}} I_{C}$
is a primary decomposition of $J$, where $$I_{C}=\big < L_{1}(C)\cup \{x_{j}^{w(j)}\mid x_{j}\in L_{2}(C)\cup L_{3}(C)\}\big>.$$
Now let $I_{C}=\mathbf{m}^{\mathbf{b}_{C}}$ and take $\mathbf{a}=\mathbf{a}_{I}=(w(1),\ldots,w(n))$. Then we can write 
$$I^{\vee}=I^{\mathbf{a}}=\big < \mathbf{x}^{\mathbf{b}^{\mathbf{a}}_{C}}\mid \mathbf{m}^{\mathbf{b}_{C}}=I_{C}\,\,\text{for some}\, C\in\mathcal{C}_{s}\big>. $$
From Proposition \ref{alx1} we have 
$$ I^{\vee}=\bigcap\{ \mathbf{m}^{\mathbf{b}^{\mathbf{a}}}\mid \mathbf{x^{b}}\,\,\text{is a minimal generator of}\,\, I\}.$$
Observe that $\mathbf{x^{b}}=x_{i}x_{j}^{w(j)}$ implies $\mathbf{m^{b^{a}}}=\big< x_{i}^{w(i)}, x_{j}\big>$ and so $I^{\vee}=\bigcap_{(x_{i},x_{j})\in E(D_{G})}\big< x_{i}^{w(i)}, x_{j}\big>$. Now, by (\cite{far}, Proposition 2.5) we have
$$I^{\vee}(\mathrm{pol})=\bigcap_{(x_{i},x_{j})\in E(D_{G})}\big(\big< x_{i,1},x_{j,1}\big>\cap\cdots\cap\big< x_{i,w(i)},x_{j,1}\big>\big)$$
and hence 
\begin{align*}
I^{\vee}(\mathrm{pol})^{\vee}&=\big<\{ x_{i,1}x_{j,1},\ldots, x_{i,w(i)}x_{j,1}\mid (x_{i},x_{j})\in E(D_{G})\}\big>\\ &= I(G^{D})
\end{align*}
\end{proof}
\medskip

Let $D_{G}$ be a weighted oriented graph such that $\overline{G}$ is chordal. Without loss of generality we can suppose $(x_{1},\ldots,x_{n})$ is a perfect elimination ordering for $\overline{G}$. We say $D_{G}$ satisfies property $(\ast)$ if the following conditions $(\ast)$ holds:

\begin{equation}
\text{\parbox{.9\textwidth}
 {If $x_{j}$ appears before $x_{i}$ in the ordering and $\{x_{j},x_{i}\}\not\in E(G)$, then for any non-sink $x_{k}$ with $(x_{k},x_{i})\not\in E(D_{G})$ and $w(k)\neq 1$ we should have $(x_{k},x_{j})\not\in E(D_{G})$.}}\tag{$\ast$}
 \end{equation}
 \medskip
 
 Our next two results Theorem \ref{alx4} and Theorem \ref{alx5} give some conditions for Cohen-Macaulay property of the Alexander dual of the ideal $I(D_{G})$, which can be seen as a generalization of the Theorem \ref{alx2}.
 \medskip
 
 \begin{theorem}\label{alx4}
 Let $D_{G}$ be a weighted oriented graph. If $\overline{G}$ is chordal and $D_{G}$ satisfies the property $(\ast)$ , then $I^{\vee}$ is Cohen-Macaulay, where $I=I(D_{G})$ . 
 \end{theorem}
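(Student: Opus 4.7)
The plan is to reduce the Cohen-Macaulay question for the (non-squarefree) ideal $I^{\vee}$ to a chordality question for an auxiliary simple graph, and then exploit the perfect elimination ordering of $\overline{G}$ together with property $(\ast)$ to produce a perfect elimination ordering of that auxiliary graph.

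First I would invoke Theorem \ref{cmpol} to replace $I^{\vee}$ by its polarization: it suffices to show that the squarefree monomial ideal $I^{\vee}(\mathrm{pol})$ is Cohen-Macaulay. By Lemma \ref{alx3} we have $(I^{\vee}(\mathrm{pol}))^{\vee}=I(G^{D})$, and since squarefree Alexander duality is involutive this gives $I^{\vee}(\mathrm{pol})=I(G^{D})^{\vee}$. Now Theorem \ref{alx2} (Lyubeznik) says $I(G^{D})^{\vee}$ is Cohen-Macaulay if and only if $\overline{G^{D}}$ is chordal. So the whole theorem reduces to showing that $\overline{G^{D}}$ is a chordal graph.

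To exhibit chordality, I will construct a perfect elimination ordering of $\overline{G^{D}}$. Let $(x_{1},\ldots,x_{n})$ be a PEO of $\overline{G}$ (as in the definition of $(\ast)$). Order the vertices of $\overline{G^{D}}$ as
\[
(x_{k,t})_{t\ge 2}\ \text{(listed in any order)},\ x_{1,1},\ x_{2,1},\ \ldots,\ x_{n,1},
\]
where the first block ranges over all $x_{k,t}\in V(G^{D})$ with $t\ge 2$ (so $x_{k}$ is non-sink and $w(k)\neq 1$). The key observations are: (i) in $G^{D}$ every edge has at least one endpoint with second index $1$, hence the vertices with second index $\ge 2$ form a clique in $\overline{G^{D}}$; (ii) the induced subgraph of $\overline{G^{D}}$ on $\{x_{1,1},\ldots,x_{n,1}\}$ is exactly $\overline{G}$; (iii) for $k$ with $t\ge 2$, the vertex $x_{k,t}$ is adjacent in $\overline{G^{D}}$ to $x_{j,1}$ if and only if $(x_{k},x_{j})\notin E(D_{G})$.

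With these observations, each $x_{k,t}$ ($t\ge 2$) is simplicial when added, because its earlier neighbors are all of the previously placed $(k',t')$-vertices, which form a clique by (i). When $x_{j,1}$ is added, its earlier $\overline{G^{D}}$-neighbors split as
\[
A_{j}=\{x_{i,1}: i<j,\ \{x_{i},x_{j}\}\in E(\overline{G})\}\ \cup\ B_{j}=\{x_{k,t}: t\ge 2,\ (x_{k},x_{j})\notin E(D_{G})\}.
\]
The set $A_{j}$ is a clique by the PEO of $\overline{G}$, and $B_{j}$ is a clique by (i). The main obstacle is verifying the cross edges, i.e.\ that every $x_{i,1}\in A_{j}$ is adjacent in $\overline{G^{D}}$ to every $x_{k,t}\in B_{j}$; by (iii) this means $(x_{k},x_{i})\notin E(D_{G})$. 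But this is exactly what $(\ast)$ gives: $x_{i}$ precedes $x_{j}$ in the PEO of $\overline{G}$ with $\{x_{i},x_{j}\}\notin E(G)$, and $x_{k}$ is non-sink with $w(k)\neq 1$ and $(x_{k},x_{j})\notin E(D_{G})$, so $(\ast)$ yields $(x_{k},x_{i})\notin E(D_{G})$. Hence $A_{j}\cup B_{j}$ is a clique, the proposed ordering is a perfect elimination ordering of $\overline{G^{D}}$, and $\overline{G^{D}}$ is chordal, completing the proof.
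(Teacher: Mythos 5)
Your proposal is correct and follows essentially the same route as the paper: reduce via polarization (Theorem \ref{cmpol}), Lemma \ref{alx3}, and Theorem \ref{alx2} to showing $\overline{G^{D}}$ is chordal, then exhibit the same perfect elimination ordering (all $x_{k,t}$ with $t\ge 2$ first, then $x_{1,1},\ldots,x_{n,1}$ following the PEO of $\overline{G}$), with property $(\ast)$ supplying exactly the cross edges between the two blocks. Your write-up of the simpliciality check for $x_{j,1}$ is, if anything, slightly more explicit than the paper's.
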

 
 \begin{proof}
 Given $\overline{G}$ is chordal and so $\overline{G}$ satisfies a perfect elimination ordering, say $(x_{1},\ldots,x_{n})$. Let $S=\{x_{i_{1}},x_{i_{2}},\ldots, x_{i_{s}}\}$ be the set of non-sink vertices with $w(i_{j})\neq 1$ for $1\leq j\leq s$ and $T= \{x_{i_{1},2},\ldots,x_{i_{1},w(i_{1})},\ldots, x_{i_{s},2},\ldots,x_{i_{s},w(i_{s})}\}$.
 \medskip
 
\textbf{Claim:} {\fontsize{13.2}{14}\selectfont $(x_{i_{1},2},\ldots,x_{i_{1},w(i_{1})},\ldots, x_{i_{s},2},\ldots,x_{i_{s},w(i_{s})}, x_{1,1}, \ldots, x_{n,1} )$} is a perfect elimination ordering for $\overline{G^{D}}$.
\medskip

Since $\{x_{i_{p},l},x_{i_{q},k}\}\not \in E(G^{D})$ for $1\leq p,q\leq s$ and $2\leq l\leq w(i_{p}),\, 2\leq k\leq w(i_{q})$, the subgraph induced by $(\{x_{i_{r},j}\}\cup \mathcal{N}(x_{i_{r},j}))\cap T$ in $\overline{G^{D}}$ is complete. Therefore every $x_{i_{r},j}\in T$ is simplicial in the subgraph induced by the vertices $$ \{x_{i_{1},2},\ldots,x_{i_{1},w(i_{1})},\ldots, x_{i_{r},2},\ldots,x_{i_{r},j}\}.$$
Again $(x_{1},\ldots,x_{n})$ is a perfect elimination ordering for $\overline{G}$ implies $(\{x_{j,1}\}\cup\mathcal{N}(x_{j,1}))\cap \{ x_{1,1},\ldots,x_{j,1}\}$ induced a complete subgraph of $\overline{G^{D}}$ for each $1\leq j\leq n$. Let $$\mathcal{N}(x_{j,1})\cap \{ x_{1,1},\ldots,x_{j,1}\}=J$$ in $\overline{G^{D}}$. Suppose $\{x_{i_{p},l},x_{j,1}\}\in E(\overline{G^{D}})$ for some $x_{i_{p},l}\in T$. So we have $\{x_{i_{p},l},x_{j,1}\}\not\in E(G^{D})$ i.e., $(x_{i_{p}},x_{j})\not\in E(D_{G})$. By property $(\ast)$, we have for every $x_{m,1}\in J$
\begin{align*}
&(x_{i_{p}},x_{m,1})\not\in E(D_{G})\\
\Rightarrow\hspace{0.5cm} & \{x_{i_{p},l},x_{m,1}\}\not\in E(G^{D}),\,\, \text{for}\,\, 2\leq l\leq w(i_{p})\\
\Rightarrow\hspace{0.5cm} & \{x_{i_{p},l},x_{m,1}\}\in E(\overline{G^{D}})
\end{align*}
Therefore $x_{j,1}$ is simplicial in the subgraph induced by $T\cup\{x_{1,1},\ldots,x_{j,1}\}$ in $\overline{G^{D}}$. This is true for all $1\leq j\leq n$ and so the claim is proved. Hence $\overline{G^{D}}$ is chordal and by Theorem \ref{alx2} $I(G^{D})^{\vee}$ is Cohen-Macaulay. Now Lemma \ref{alx3} gives $I^{\vee}(\mathrm{pol})$ is Cohen-Macaulay and hence from Theorem \ref{cmpol} we have $I^{\vee}$ is Cohen-Macaulay. 
 \end{proof}
\medskip

\begin{theorem}\label{alx5}
Let $D_{G}$ be a weighted oriented graph. If $I(D_{G})^{\vee}$ is Cohen-Macaulay, then $\overline{G}$ is chordal.
\end{theorem}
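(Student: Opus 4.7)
The strategy is to translate the statement into the squarefree setting via polarization and then to invoke Theorem \ref{alx2} applied to the companion graph $G^D$ of Lemma \ref{alx3}. Write $I = I(D_G)$ and suppose $I^\vee$ is Cohen-Macaulay.

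First, I would apply Theorem \ref{cmpol}, so that the polarization $I^\vee(\mathrm{pol})$ is Cohen-Macaulay in the larger polynomial ring $A(\mathrm{pol})$. Since $I^\vee(\mathrm{pol})$ is a squarefree monomial ideal, the generalized Alexander duality of Definition \ref{alx} reduces to the classical squarefree Alexander dual, which is an involution. Dualizing the identity $\bigl(I^\vee(\mathrm{pol})\bigr)^\vee = I(G^D)$ from Lemma \ref{alx3} therefore yields $I^\vee(\mathrm{pol}) = I(G^D)^\vee$, so $I(G^D)^\vee$ is Cohen-Macaulay.

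Next, I would apply Theorem \ref{alx2} directly to the simple graph $G^D$ to conclude that $\overline{G^D}$ is chordal. Finally, I would observe that the construction of $G^D$ in Lemma \ref{alx3} places an edge $\{x_{i,1}, x_{j,1}\}$ into $E(G^D)$ precisely when $(x_i, x_j) \in E(D_G)$, and introduces no further edges among the first-copy vertices $\{x_{1,1}, \ldots, x_{n,1}\}$. Consequently, the subgraph of $G^D$ induced on these vertices is exactly $G$, and $\overline{G}$ is an induced subgraph of $\overline{G^D}$. Since chordality is preserved under taking induced subgraphs, $\overline{G}$ must be chordal.

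The only subtle point is the identification $I^\vee(\mathrm{pol}) = I(G^D)^\vee$, which is the bridge from Theorem \ref{cmpol} (polarization preserves Cohen-Macaulayness) to Theorem \ref{alx2} (the Lyubeznik/Fr\"{o}berg-type criterion); once this is in place, the rest of the argument is a short chain of implications followed by the standard fact that induced subgraphs of chordal graphs are chordal.
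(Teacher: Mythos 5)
Your proposal is correct and follows essentially the same route as the paper: polarize, invoke Theorem \ref{cmpol}, identify $I^{\vee}(\mathrm{pol})$ with $I(G^{D})^{\vee}$ via Lemma \ref{alx3}, apply Theorem \ref{alx2} to get chordality of $\overline{G^{D}}$, and restrict to the induced copy of $\overline{G}$. The only difference is that you make explicit the involution step $I^{\vee}(\mathrm{pol}) = I(G^{D})^{\vee}$, which the paper leaves implicit.
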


\begin{proof}
Since $I(D_{G})^{\vee}$ is Cohen-Macaulay, by Theorem \ref{cmpol} we have  $I(D_{G})^{\vee}(\mathrm{pol})$ is Cohen-Macaulay. By Lemma \ref{alx3} $I(G^{D})^{\vee}$ is Cohen-Macaulay and hence from Theorem \ref{alx2} we have $\overline{G^{D}}$ is chordal. Therefore $\overline{G}$ is chordal as $G$ is an induced subgraph of $G^{D}$.
\end{proof}
\medskip

\begin{example}
\end{example}
\begin{center}
\begin{tikzpicture}
  [scale=.4,auto=left,every node/.style={circle,scale=0.5}]
 
  \node[draw,fill=blue!20] (n1) at (3,3)  {$x_{1}$};
  \node[draw,fill=blue!20] (n2) at (0,0)  {$x_{2}$};
  \node[draw,fill=blue!20] (n3) at (3,-3) {$x_{3}$};
   \node[draw,fill=blue!20] (n4) at (7,-3) {$x_{4}$};
   \node[draw,fill=blue!20] (n5) at (10,0) {$x_{5}$};
 \node[draw,fill=blue!20] (n6) at (7,3) {$x_{6}$};
 \node (m1) at (3,4){$2$};
 \node (m2) at (-1,0){$3$};
 \node (m3) at (2,-3){$2$};
 \node (m4) at (8,-3){$1$};
 \node (m5) at (11,0){$4$};
 \node (m6) at (7,4){$3$};
\node[scale=2] (n7) at (5,-5){$D_{G}$};
 
  \foreach \from/\to in {n1/n5,n4/n1,n6/n2,n2/n5, n4/n2, n4/n3, n5/n4, n5/n6}
    \draw[->] (\from) -- (\to);
    
     \node[draw,fill=blue!20] (n1) at (18,3)  {$x_{1}$};
  \node[draw,fill=blue!20] (n2) at (15,0)  {$x_{2}$};
  \node[draw,fill=blue!20] (n3) at (18,-3) {$x_{3}$};
   \node[draw,fill=blue!20] (n4) at (22,-3) {$x_{4}$};
   \node[draw,fill=blue!20] (n5) at (25,0) {$x_{5}$};
 \node[draw,fill=blue!20] (n6) at (22,3) {$x_{6}$};
 \node[scale=2] (n7) at (20,-5){$\overline{G}$};
   
    \foreach \from/\to in {n1/n2,n3/n1,n3/n2,n1/n6, n3/n6, n4/n6, n5/n3}
    \draw[] (\from) -- (\to);
    
\end{tikzpicture}
\end{center}
In the above graph $D_{G}$, it is clear that $\overline{G}$ is chordal and $D_{G}$ satisfies the property $(\ast)$ with respect to the perfect elimination ordering $(x_{1},x_{3},x_{6},x_{2},x_{4},x_{5})$. Therefore by Theorem \ref{alx4} $I(D_{G})^{\vee}$ is Cohen-Macaulay.
\medskip

\bibliographystyle{amsalpha}

\end{document}